\documentclass[10pt]{article}
\usepackage{latexsym}

\usepackage[top=1in, bottom=1in, left=1in, right=1in]{geometry}

\usepackage{adjustbox}

\usepackage{amssymb}

\usepackage{graphicx} 
\usepackage{color} 
\usepackage{amsmath, amsthm, amssymb, appendix}
\usepackage{enumerate}
\usepackage{float}
\usepackage{url}
\usepackage{stackrel}
\usepackage{mathrsfs,dsfont}
\usepackage[labelfont=bf]{caption}
\usepackage{subcaption}
\usepackage{wrapfig}
\usepackage{bbm}
\usepackage{bm}
\usepackage{epigraph}
\usepackage{physics}
\usepackage[colorinlistoftodos, shadow]{todonotes} 
\usepackage{multirow}
\usepackage{hyperref}
\usepackage{soul}
\usepackage{pdflscape}
\usepackage{tikz}
\usetikzlibrary{shapes,arrows,automata,graphs,decorations.pathmorphing,decorations.markings}
\usepackage{tikz-cd}
\usepackage{fancybox}
\usepackage{booktabs}

\usepackage{mdwlist}

\usepackage{pifont}

\newtheorem{theorem}{Theorem}[section]

\newtheorem{proposition}[theorem]{Proposition}
\newtheorem{lemma}[theorem]{Lemma}

\newtheorem{remark}[theorem]{Remark}

\theoremstyle{definition}

\theoremstyle{definition}
\newtheorem{definition}[theorem]{Definition}
\theoremstyle{definition}

\newcommand{\been}{\begin{enumerate}}
\newcommand{\enen}{\end{enumerate}}
\newcommand{\beit}{\begin{itemize}}
\newcommand{\enit}{\end{itemize}}

\def\EE{\mathcal{E}}

\def\cre
\def\cgn{\color{olive}}
\def\cbl
\def\cma
\def\ccy{\color{cyan}}

\usepackage{tikz}    
\usetikzlibrary{shapes,automata,positioning,arrows,fit}
\usepackage{mathtools}  
\usetikzlibrary{snakes}

\usepackage{caption}
\usepackage{tikz-cd}
\usetikzlibrary{babel}

\usepackage{tkz-euclide}

\usepackage{tikz-3dplot}

\usepackage{relsize}

 \tikzset{every node/.style={auto}}
 \tikzset{every state/.style={rectangle, minimum size=0pt, draw=none, font=\Large}}
  \tikzset{bend angle=7}
  
  \usepackage{array}
  
  \usepackage{chemfig}

\usepackage{authblk}

\makeatletter
\newcommand{\xrightleftarrows}[2]{
  \mathrel{\mathop{
    \vcenter{\offinterlineskip\m@th
      \ialign{\hfil##\hfil\cr
        \hphantom{$\scriptstyle\mspace{8mu}{#1}\mspace{8mu}$}\cr
        \rightarrowfill\cr
        \vrule height0pt width 2em\cr
        \leftarrowfill\cr
        \hphantom{$\scriptstyle\mspace{8mu}{#2}\mspace{8mu}$}\cr
        \noalign{\kern-0.3ex}
      }
    }
  }\limits^{#1}_{#2}}
}
\newcommand{\xleftrightarrows}[2]{
  \mathrel{\mathop{
    \vcenter{\offinterlineskip\m@th
      \ialign{\hfil##\hfil\cr
        \hphantom{$\scriptstyle\mspace{8mu}{#1}\mspace{8mu}$}\cr
        \leftarrowfill\cr
        \vrule height0pt width 2em\cr
        \rightarrowfill\cr
        \hphantom{$\scriptstyle\mspace{8mu}{#2}\mspace{8mu}$}\cr
        \noalign{\kern-0.3ex}
      }
    }
  }\limits^{#1}_{#2}}
}
\makeatother

\title{Bistability, Absolute Concentration Robustness, and Hysteresis \\ in Dual-Site Futile Cycles with Bifunctional Enzymes}
\author{Badal Joshi\thanks{Department of Mathematics, California State University San Marcos. \texttt{bjoshi@csusm.edu}} \hspace{1em}
Tung D. Nguyen\thanks{Department of Mathematics, University of California Los Angeles. \texttt{tungdnguyen@math.ucla.edu}} \hspace{1em} Matthew D. Johnston\thanks{Department of Mathematics and Computer Science, Lawrence Technological University. \texttt{mjohnsto1@ltu.edu}}}

\begin{document}
\maketitle

\begin{abstract}

Bifunctional enzymes, which catalyze both the forward and reverse steps of a substrate modification reaction, arise naturally in bacterial two-component signaling systems and metabolic regulation. Beyond their well-known role in conferring absolute concentration robustness (ACR) on substrate species, bifunctional enzymes profoundly shape the dynamical landscape of the networks in which they appear. We study a class of dual-site futile cycles in which the reverse modification steps are carried out by bifunctional enzyme-substrate compounds, and provide a complete mathematical analysis of all four such networks, characterizing the existence, number, and stability of steady states, as well as the bifurcation structure as total substrate is varied. All four networks admit boundary steady states, in contrast to the non-bifunctional case. The networks differ in the number and stability of boundary steady states, in the maximum number of positive steady states (ranging from two to four), and in whether bistability is present. In two networks, a transcritical bifurcation connects the boundary and positive steady state branches; in one case this is a backward bifurcation, producing hysteresis. Perhaps the most striking phenomenon occurs in one of the four networks, which simultaneously exhibits bistability and ACR in the final modification state, where the system can settle into either of two stable steady states with different intermediate concentrations yet identical final product concentration.

\end{abstract}

\section{Introduction}

\begin{figure}[t]
    \centering
    \begin{subfigure}{0.45\textwidth}
        \centering
        \begin{tikzpicture}[baseline={(current bounding box.center)},node distance=2cm, on grid]
            \node (S0)  {$S_0$};
            \node (S1) [right=3cm of S0] {$S_1$};
            \node (S2) [right=3cm of S1] {$S_2$};
            \draw[->,line width=1.25] (S0) .. controls +(0.5,1.23) and +(-0.5,1.23) .. node[above] {$(E, C_1)$} (S1);
            \draw[->,line width=1.25] (S1) .. controls +(-0.5,-1.23) and +(0.5,-1.23) .. node[below] {$(\EE_1, C_4)$} (S0);
            \draw[->,line width=1.25] (S1) .. controls +(0.5,1.23) and +(-0.5,1.23) .. node[above] {$(E, C_2)$} (S2);
            \draw[->,line width=1.25] (S2) .. controls +(-0.5,-1.23) and +(0.5,-1.23) .. node[below] {$(\EE_2, C_3)$} (S1);
        \end{tikzpicture}
        \caption{Substrate hypergraph}
        \label{fig:sub2_state}
    \end{subfigure}
    \hfill
    \begin{subfigure}{0.50\textwidth}
        \centering
        {\footnotesize
        \begin{gather*}
        S_0 + E \;\xrightleftarrows{k_1^+}{k_1^-}\; C_1 \;\xrightarrow{k_1}\; S_1 + E
        \qquad
        S_1 + E \;\xrightleftarrows{k_2^+}{k_2^-}\; C_2 \;\xrightarrow{k_2}\; S_2 + E \\[6pt]
        S_0 + \EE_1 \;\xleftarrow{k_4}\; C_4 \;\xleftrightarrows{k_4^+}{k_4^-}\; S_1 + \EE_1
        \qquad
        S_1 + \EE_2 \;\xleftarrow{k_3}\; C_3 \;\xleftrightarrows{k_3^+}{k_3^-}\; S_2 + \EE_2
        \end{gather*}
        }
        \caption{Detailed model}
        \label{fig:sub1_reaction}
    \end{subfigure}
    \caption{Dual-site futile cycle with bifunctional enzymes $\EE_1$ and $\EE_2$. We consider all cases where $\EE_i \in \{ C_1, C_2\}$ for $i=1, 2$. The forward steps are catalyzed by a common enzyme $E$ and the reverse steps by one or both of the intermediate compounds $C_1$ and $C_2$. We denote the reaction network above with the notation $(E,E,\EE_1,\EE_2)$.}
    \label{fig:dualsite}
\end{figure}

Reversible covalent modification of proteins (most commonly phosphorylation and dephosphorylation) is a fundamental mechanism by which cells transduce, amplify, and respond to signals \cite{shinar2009robustness}. In a \emph{futile cycle} (also called a substrate cycle), a substrate is repeatedly modified and demodified by opposing enzymatic reactions that together consume energy without net chemical change. Despite this apparent futility, such cycles serve critical regulatory functions: they can generate ultrasensitive, switch-like responses \cite{goldbeter1981amplified}, maintain concentrations at precise levels insensitive to fluctuations \cite{shinar2010structural}, and produce bistability that underlies cellular memory and decision-making \cite{ferrell1996tripping}.

A classical single-site futile cycle involves a substrate that cycles between two forms (e.g., unphosphorylated $S_0$ and phosphorylated $S_1$), with forward and reverse reactions catalyzed by distinct enzymes. The mathematical analysis of such systems under mass-action kinetics is relatively well understood: the Henri-Michaelis-Menten mechanism yields a unique positive steady state for each choice of parameters \cite{feinberg2019foundations}. The situation becomes considerably richer at multiple phosphorylation sites, where the substrate can exist in several modification states. Dual-site phosphorylation, in which a substrate cycles through three states $S_0$, $S_1$, $S_2$, has attracted particular attention due to its prevalence in cell signaling and its capacity for complex dynamics. Under distributive (non-processive) mechanisms, dual-site phosphorylation cycles can admit multiple positive steady states \cite{markevich2004signaling, wang2008singularly}, and the number and stability of these steady states depend sensitively on the kinetic mechanism and parameter values.

A biologically important variant that has received comparatively less mathematical attention is the case of \emph{bifunctional enzymes}, which carry out both the forward and reverse reactions on the substrate. In a standard futile cycle, separate enzymes catalyze phosphorylation and dephosphorylation. In a bifunctional system, a single enzyme species performs both functions. This architecture arises naturally in prokaryotic two-component signaling systems, such as the EnvZ/OmpR system in \textit{Escherichia coli} \cite{russo1993essential, hsing1998mutations, batchelor2003robustness}, and in metabolic regulation; a prominent example is the IDHKP-IDH system, in which the bifunctional enzyme isocitrate dehydrogenase kinase/phosphatase (IDHKP) both phosphorylates and dephosphorylates isocitrate dehydrogenase (IDH) \cite{shinar2009robustness, dexter2013dimerization}. The single-site futile cycle with a bifunctional enzyme has been studied in detail: Shinar and Feinberg showed that it exhibits \emph{absolute concentration robustness} (ACR) in the substrate species \cite{shinar2010structural}, meaning the steady-state concentration of the substrate is determined entirely by the enzyme kinetics and is independent of total enzyme or substrate levels. This result is striking from both a mathematical and a biological perspective: it implies that the system output is buffered against perturbations that change total protein levels, a property that may underlie the remarkable robustness observed in bacterial signaling \cite{shinar2009robustness, batchelor2003robustness}.

The concept of ACR, introduced by Shinar and Feinberg \cite{shinar2010structural}, identifies a network-structural condition that guarantees robustness of a specific species concentration at steady state. Extensions and variations of ACR have since been studied extensively, including connections to stochastic dynamics \cite{anderson2014stochastic}, dynamic ACR \cite{joshi2022foundations, joshi2023reaction}, and algebraic characterizations \cite{puente2025absolute}. More recently, Joshi and Nguyen developed a \emph{substrate hypergraph} approach that allows ACR to be identified directly from the network structure of the substrate hypergraph (a coarse-grained description of which substrates are transformed in each reaction, shared by many different detailed models and ODE systems) without specifying the precise reaction steps or enzyme-substrate interactions \cite{joshi2026bifunctional}.

Stability of boundary steady states, which are equilibria at which some species concentrations are zero, presents a distinct technical challenge. Standard linearization-based stability criteria apply to interior equilibria, but boundary equilibria require more care because the relevant perturbations must remain in the non-negative orthant. The \emph{next-generation matrix} (NGM) method, originally developed in mathematical epidemiology to compute the basic reproduction number $\mathcal{R}_0$ \cite{van2002reproduction, van2008further}, has recently been adapted to establish stability of boundary steady states in mass-action systems \cite{avram2024advancing, avram2024stability, johnston2026boundary}. This approach provides a threshold quantity $\rho(FV^{-1})$, the spectral radius of a non-negative matrix constructed from the linearization at the boundary equilibrium, that determines stability: the boundary equilibrium is stable when $\rho(FV^{-1}) < 1$ and unstable when $\rho(FV^{-1}) > 1$.

The dynamics of \emph{dual-site} futile cycles with bifunctional enzymes has not been systematically analyzed. The present paper undertakes this analysis for all four combinatorial choices of bifunctional enzyme in a processive dual-site cycle: $(E,E,C_2,C_1)$, $(E,E,C_1,C_1)$, $(E,E,C_1,C_2)$, and $(E,E,C_2,C_2)$, where the notation specifies which enzyme catalyzes each of the four elementary reactions (see Figure \ref{fig:dualsite}). The ACR properties of these networks were identified in \cite{joshi2024bifunctional, joshi2026bifunctional}; the present paper provides the first systematic analysis of their full dynamical behavior, including boundary steady state existence and stability, positive steady state capacity and multiplicity, bifurcation structure, and multistability.

Our analysis reveals a striking diversity of dynamical behaviors across the four networks, all arising from the same underlying enzymatic architecture with only the assignment of bifunctional roles changed. The networks differ in the number and stability of boundary steady states (one or two, stable or unstable), the maximum number of positive steady states (two, three, or four), the nature of the bifurcation connecting boundary and positive steady state branches (transcritical, saddle-node, or backward bifurcation), and whether bistability is present. The ACR properties further differentiate the networks: one network exhibits ACR only in the final product $S_2$, one exhibits ACR only in the intermediate species $S_1$, one has ACR in both $S_1$ and $S_2$, and one has no ACR. In the network without ACR, the product $s_1 s_2$ is nonetheless invariant across positive steady states, a weaker form of robustness that constrains the joint variation of $s_1$ and $s_2$; for the specific parameter values studied, this forces a negative dose-response in $s_2$ whenever $s_1$ increases with $T_s$. More broadly, in none of the five networks studied here does the concentration of the final modification product exhibit a positive dose-response to total substrate, in contrast to the non-bifunctional case, where a biphasic dose-response (first increasing, then decreasing) has been shown to arise from enzyme sequestration effects \cite{suwanmajo2013biphasic}. This appears to be a signature of the bifunctional enzyme architecture. We discuss these dose-response properties further in Section~\ref{sec:discussion}. This combination of robustness and complexity makes the dual-site bifunctional enzyme system a rich object of study both mathematically and biologically.

The paper is organized as follows. Section~\ref{sec:background} presents the mathematical background, including mass-action kinetics, stoichiometric compatibility classes, steady state definitions, boundary steady state stability via the next-generation matrix method, and absolute concentration robustness. Section~\ref{sec:SSP_1site} analyzes the single-site network $(E,C_1)$, which serves as motivation and a building block for the dual-site analysis. Section~\ref{sec:dualsite} contains the main results for the four dual-site networks. Section~\ref{sec:discussion} discusses the broader implications of our findings, comparing the networks to each other and to the non-bifunctional case. An appendix following the references contains detailed algebraic computations and proofs.

\section{Background}
\label{sec:background}

In this section we introduce the mathematical framework of chemical reaction networks and mass-action kinetics, and collect the definitions and results that will be used throughout the paper.

\subsection{Reaction Networks and Mass-Action Systems}

A \emph{chemical reaction network} consists of a set of species $\{X_1, \ldots, X_n\}$ and a set of reactions, each of the form
\[
\sum_{i=1}^n \alpha_{ij} X_i \;\longrightarrow\; \sum_{i=1}^n \beta_{ij} X_i, \quad j = 1, \ldots, r,
\]
where the non-negative integer vectors $\alpha_j = (\alpha_{1j}, \ldots, \alpha_{nj})$ and $\beta_j = (\beta_{1j}, \ldots, \beta_{nj})$ are the \emph{source} and \emph{product complexes} of reaction $j$, respectively. Under \emph{mass-action kinetics}, the rate of reaction $j$ is assumed to be proportional to the product of the concentrations of the reacting species, with proportionality constant $k_j > 0$ called the \emph{rate constant}. Denoting by $x = (x_1, \ldots, x_n) \in \mathbb{R}^n_{\geq 0}$ the vector of species concentrations, the time evolution of the system is governed by the \emph{mass-action ODE system}
\begin{equation}
\label{eq:mas}
\frac{dx}{dt} = \sum_{j=1}^r k_j (\beta_j - \alpha_j) x^{\alpha_j} =: f(x),
\end{equation}
where $x^{\alpha_j} = \prod_{i=1}^n x_i^{\alpha_{ij}}$ is the monomial corresponding to the source complex of reaction $j$. The vector $\beta_j - \alpha_j$ is the \emph{reaction vector} of reaction $j$, and the \emph{stoichiometric subspace} $S = \mathrm{span}\{\beta_j - \alpha_j : j = 1, \ldots, r\} \subseteq \mathbb{R}^n$ is the subspace spanned by all reaction vectors.

Since each monomial $x^{\alpha_j}$ is non-negative for $x \in \mathbb{R}^n_{\geq 0}$, the positive orthant $\mathbb{R}^n_{\geq 0}$ is forward-invariant under \eqref{eq:mas}: concentrations that begin non-negative remain non-negative for all positive time.

\subsection{Stoichiometric Compatibility Classes}

Solutions of \eqref{eq:mas} are further confined by \emph{conservation laws}: linear functions of the concentrations that remain constant along any trajectory. Specifically, if $w \in \mathbb{R}^n$ is orthogonal to $S$ (i.e., $w \cdot (\beta_j - \alpha_j) = 0$ for all $j$), then $w \cdot x(t)$ is constant. The conservation laws therefore restrict each trajectory to an affine coset of the stoichiometric subspace.

\begin{definition}
For a given initial condition $x_0 \in \mathbb{R}^n_{\geq 0}$, the \emph{stoichiometric compatibility class} of $x_0$ is the set
\[
\mathcal{C} = (x_0 + S) \cap \mathbb{R}^n_{\geq 0}.
\]
\end{definition}

Each stoichiometric compatibility class $\mathcal{C}$ is a compact convex polytope (since it is the intersection of a closed affine subspace with the non-negative orthant, and is bounded by the conservation laws). Its \emph{interior} $\mathrm{int}(\mathcal{C})$ consists of all points in $\mathcal{C}$ with strictly positive coordinates in the directions not fixed by the conservation laws, and its \emph{boundary} $\partial\mathcal{C} = \mathcal{C} \setminus \mathrm{int}(\mathcal{C})$ consists of points where at least one species concentration is zero.

\subsection{Steady States}

\begin{definition}\label{def:steadystates}
A point $x^* \in \mathbb{R}^n_{\geq 0}$ is a \emph{steady state} of \eqref{eq:mas} if $f(x^*) = 0$.
A steady state is \emph{positive} if $x^* \in \mathbb{R}^n_{> 0}$, and a \emph{boundary steady state} if $x^* \in \partial \mathbb{R}^n_{\geq 0}$, i.e., if at least one species concentration is zero. A steady state is \emph{dead} if all reaction rates vanish at $x^*$ (i.e., $k_j x^{*\alpha_j} = 0$ for all $j$), and \emph{living} otherwise.
\end{definition}

A steady state $x^*$ is \emph{locally asymptotically stable} (relative to a stoichiometric compatibility class $\mathcal{C}$) if every trajectory starting sufficiently close to $x^*$ in $\mathcal{C}$ converges to $x^*$. For interior steady states, local asymptotic stability can be assessed from the eigenvalues of the Jacobian $Df(x^*)$ restricted to $S$: $x^*$ is locally asymptotically stable if all such eigenvalues have strictly negative real part.

\subsection{Boundary Steady State Stability}

Assessing the stability of a boundary steady state $x^*$ requires special care, since the Jacobian of $f$ at $x^*$ may have eigenvalues with non-negative real part corresponding to directions that point outside $\mathcal{C}$. We use the \emph{next-generation matrix method}, adapted from mathematical epidemiology \cite{van2002reproduction, van2008further} to biochemical reaction networks \cite{avram2024advancing, avram2024stability, johnston2026boundary}.

The method works as follows. Partition the species so that $\tilde{x}$ are the \emph{vanishing species} (those with $x^*_i = 0$) and $\tilde{y}$ are the species with $x^*_i > 0$. The dynamics of the vanishing species near $x^*$ are governed by the linearization restricted to $\tilde{x}$. Write the right-hand side for the vanishing species as $\mathcal{F} - \mathcal{V}$, where $\mathcal{F}$ collects all production terms and $\mathcal{V}$ collects all outflow and degradation terms. Let $F$ and $V$ denote the Jacobians of $\mathcal{F}$ and $\mathcal{V}$ with respect to $\tilde{x}$, evaluated at $x^*$. We recall that a matrix $M$ is a \emph{$Z$-matrix} if its off-diagonal entries are all non-positive.

\begin{theorem}[\cite{johnston2026boundary}]\label{thm:ngm}
Suppose $F$ is non-negative and nonzero, $V$ is an invertible $Z$-matrix, and $x^*$ is locally asymptotically stable with respect to the invariant subsystem at the intersection of the boundary face containing $x^*$ and the stoichiometric compatibility class $\mathcal{C}$. Then $x^*$ is locally asymptotically stable relative to $\mathcal{C}$ if $\rho(FV^{-1}) < 1$, and unstable if $\rho(FV^{-1}) > 1$, where $\rho(\cdot)$ denotes the spectral radius.
\end{theorem}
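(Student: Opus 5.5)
The plan is to reduce local stability of $x^*$ in $\mathcal{C}$ to the spectrum of the Jacobian restricted to the stoichiometric subspace. That Jacobian has a block-triangular structure in the splitting into vanishing species $\tilde{x}$ and non-vanishing species $\tilde{y}$. I then analyze the $\tilde{x}$-block with the classical van den Driessche--Watmough $M$-matrix argument.

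\textbf{Block structure.} The boundary face $\{\tilde{x}=0\}$ intersected with $\mathcal{C}$ is forward invariant. Hence every production term $\mathcal{F}_i$ and every outflow term $\mathcal{V}_i$ for a vanishing species vanishes when $\tilde{x}=0$. (Under mass action, any monomial that creates $x_i$ from nothing on the face would contradict invariance.) It follows that $\partial(\mathcal{F}-\mathcal{V})/\partial \tilde{y}=0$ at $x^*$. In coordinates $(\tilde{x},\tilde{y})$ the Jacobian is therefore
\[
J=\begin{pmatrix} F-V & 0\\ * & J_{22}\end{pmatrix},
\]
where $J_{22}$ is the linearization of the invariant subsystem on the face. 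Restricting to $S$ preserves this triangular form. I would choose coordinates on $S$ adapted to the face: the conservation laws restricted to the face give the tangent space of the face within $\mathcal{C}$, and the complementary directions are parameterized by $\tilde{x}$. The spectrum of $J|_S$ is then the union of the spectra of $F-V$ and of $J_{22}$ restricted to the face's stoichiometric directions.

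\textbf{Using the hypotheses.} By the stability assumption on the invariant subsystem, the second block is handled; I will assume, as the statement intends, that it is linearly stable, i.e.\ Hurwitz. Stability of $x^*$ then reduces to whether $F-V$ is Hurwitz. Here I invoke the standard lemma: if $F\ge 0$ and $V$ is a nonsingular $M$-matrix, then $s(F-V)<0 \iff \rho(FV^{-1})<1$, and $s(F-V)>0 \iff \rho(FV^{-1})>1$, where $s$ is the spectral abscissa. To verify that $V$ is a nonsingular $M$-matrix, I use that $V$ is an invertible $Z$-matrix and show $V^{-1}\ge 0$. The argument is that column sums of $V$ dominate outflow, since outflow from the vanishing set is nonnegative, giving $V$ nonnegative column-diagonal dominance. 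More cleanly, I would cite the epidemiological setting, where $-V$ is the generator of a substochastic transfer process, so $V$ is an $M$-matrix.

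The proof of the lemma goes as follows. $F-V$ is a $Z$-negated matrix, i.e.\ Metzler, so by Perron--Frobenius $s(F-V)$ is a real eigenvalue with a nonnegative eigenvector. The identity
\[
V-F=(I-FV^{-1})V
\]
together with the $M$-matrix characterization ($A$ is a nonsingular $M$-matrix iff $A=sI-B$ with $B\ge0$ and $\rho(B)<s$) shows that $V-F$ is a nonsingular $M$-matrix exactly when $\rho(FV^{-1})<1$. In the case $\rho(FV^{-1})>1$, take a Perron vector $w\ge0$ of $FV^{-1}$ and set $v=V^{-1}w\ge0$. Then $(F-V)v=(\rho-1)Vv$, which combined with Metzler comparison yields $s(F-V)>0$. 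Linearization (Hartman--Grobman within $\mathcal{C}$) then gives local asymptotic stability or instability. For instability, the unstable direction has a nonnegative $\tilde{x}$-component, so it points into $\mathcal{C}$.

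\textbf{Main obstacle.} I expect the hardest step to be making the block decomposition rigorous on $S$ rather than on $\mathbb{R}^n$. Conservation laws can couple $\tilde{x}$ and $\tilde{y}$, so I must check that the projection of $S$ onto the $\tilde{x}$-coordinates is all of $\mathbb{R}^{|\tilde{x}|}$ (or restrict $F,V$ accordingly), and that no zero eigenvalues of $J$ transverse to $S$ interfere. A related difficulty is justifying that $V$ is an $M$-matrix from the $Z$-matrix hypothesis alone. My first attempt would be to show that $V$ has nonnegative column sums from mass conservation of the outflow terms.
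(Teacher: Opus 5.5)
\textbf{Main gap: $V$ need not be an $M$-matrix.} Your architecture is the standard van den Driessche--Watmough route: face invariance gives a block-triangular Jacobian whose $\tilde x$-block is $F-V$, and a regular-splitting argument then compares $s(F-V)$ with $\rho(FV^{-1})$. The paper only cites this result, so there is no in-paper proof to compare against. The gap is the step where $V$ must be a \emph{nonsingular $M$-matrix}, i.e.\ $V^{-1}\ge 0$. This does not follow from ``invertible $Z$-matrix''.

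Your column-sum idea already fails on the paper's own data. In the dead-boundary computation for $(E,E,C_2,C_1)$, the $c_4$-column of $V$ is $(-k_4,-k_4^-,-k_4-k_4^-,0,0,k_4+k_4^-)^T$. Its sum is $-(k_4+k_4^-)<0$, because $C_4$ dissociates into two vanishing species. A weighted version ($d^TV>0$ for some $d>0$) is equivalent to the $M$-matrix property, so it is not a shortcut. There, $V$ is an $M$-matrix only because the displayed $V^{-1}$ happens to be entrywise nonnegative.

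Without this property the conclusion is actually false. Take $X_1\to 2X_2$ and $X_2\to 2X_1$ with rate constant $1$, and $X_1\to 2X_1$ with rate constant $\delta$, at $x^*=0$. Here $S=\mathbb{R}^2$ and the face meets $\mathcal{C}$ only in $x^*$. Use the transfer-type split the paper itself uses, $\mathcal{F}=(\delta x_1,0)$ and $\mathcal{V}=(x_1-2x_2,\,x_2-2x_1)$. This gives the invertible $Z$-matrix $V=\bigl(\begin{smallmatrix}1&-2\\-2&1\end{smallmatrix}\bigr)$ and $\rho(FV^{-1})=\delta/3<1$ for $\delta<3$. Yet $\det(F-V)=-(\delta+3)<0$, so $x^*$ is unstable.

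So $V^{-1}\ge 0$ must be added as a hypothesis or verified in each application, as the paper effectively does. Equivalently, $-V$ is Hurwitz, which is what condition (A5) supplies in van den Driessche--Watmough. It is automatic only if $\mathcal{F}$ literally contains all production terms, since then $V$ is a nonnegative diagonal matrix. With this hypothesis, your $M$-matrix argument for both inequalities goes through.

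\textbf{Secondary points.} The obstacle you single out, restriction to $S$, actually dissolves. Let $\pi_{\tilde x}$ be the projection onto the vanishing coordinates. Since $f$ takes values in $S$, $\operatorname{range}J\subseteq S$, and the block form gives $\operatorname{range}(F-V)\subseteq\pi_{\tilde x}(S)\cong S/(S\cap\{\tilde x=0\})$. Hence every eigenvector of $F-V$ with nonzero eigenvalue lies in $\pi_{\tilde x}(S)$, and that eigenvalue belongs to $J|_S$. In the stable case, a Hurwitz $F-V$ stays Hurwitz on any invariant subspace.

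Two smaller points need more than you wrote.
\begin{enumerate}
\item You replace the stated hypothesis (local asymptotic stability of the face subsystem) by linear stability. The stated version needs a center-manifold reduction: since $F-V$ is hyperbolic, the center subspace of $J|_S$ lies in the face, so a center manifold can be taken inside the invariant face.
\item Hartman--Grobman on $x^*+S$ does not by itself give instability relative to the closed set $\mathcal{C}$. One clean route is to write $f_{\tilde x}(x)=A(x)\tilde x$ with $A(x)$ Metzler on the orthant and $A(x^*)=F-V$. Then bound $A(x)$ below near $x^*$ by a Metzler $B_\varepsilon$ with $s(B_\varepsilon)>0$, and compare with $e^{B_\varepsilon t}\tilde x_0$ for $\tilde x_0>0$.
\end{enumerate}
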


The quantity $\rho(FV^{-1})$ plays the role of the \emph{basic reproduction number} from epidemiology: when it is less than one the boundary steady state attracts nearby interior trajectories, and when it exceeds one the boundary steady state repels them. Note that, if the intersection of the boundary face containing $x^*$ and the stoichiometric compatibility class $\mathcal{C}$ consists of only the point $x^*$, then local stability holds trivially.

\subsection{Absolute Concentration Robustness}

\begin{definition}
A reaction network has \emph{absolute concentration robustness} (ACR) if for any choice of positive rate constants, the mass-action system has the following two properties:
\begin{enumerate}
    \item a positive steady state exists, and
    \item one species coordinate is invariant across all positive steady states, i.e., $x_i^* = q$ for every positive steady state $x^*$.
\end{enumerate}
When these hold, $X_i$ is the \emph{ACR species} and $q$ is the \emph{ACR value}.
\end{definition}

ACR is a strong form of robustness: the concentration of an ACR species at steady state is determined entirely by the rate constants, independent of initial conditions (and therefore of the total conserved quantities). The concept was introduced by Shinar and Feinberg \cite{shinar2010structural}, who also gave sufficient structural conditions for ACR in deficiency-one networks. Joshi and Nguyen \cite{joshi2026bifunctional} subsequently developed structural conditions that apply to enzymatic networks of arbitrary deficiency.

\section{Single-site futile cycle with a bifunctional enzyme}
\label{sec:SSP_1site}
\begin{figure}[H]
    \centering
    \begin{subfigure}{0.40\textwidth}
        \centering
        \begin{tikzpicture}[baseline={(current bounding box.center)}]
            \node (S0)  {$S_0$};
            \node (S1) [right=2.5cm of S0] {$S_1$};
            \draw[->,line width=1.25] (S0) .. controls +(0.5,1.23) and +(-0.5,1.23) .. node[above] {$(E, C_1)$} (S1);
            \draw[->,line width=1.25] (S1) .. controls +(-0.5,-1.23) and +(0.5,-1.23) .. node[below] {$(C_1, C_2)$} (S0);
        \end{tikzpicture}
        \caption{Substrate hypergraph}
    \end{subfigure}%
    \hfill
    \begin{subfigure}{0.55\textwidth}
        \centering
        {
        \begin{gather*}
        S_0 + E \;\xrightleftarrows{k_1^+}{k_1^-}\; C_1 \;\xrightarrow{k_1}\; S_1 + E \\[6pt]
        S_1 + C_1 \;\xrightleftarrows{k_2^+}{k_2^-}\; C_2 \;\xrightarrow{k_2}\; S_0 + C_1
        \end{gather*}
        }
        \caption{Detailed model}
    \end{subfigure}
 \caption{Single-site futile cycle with bifunctional enzyme $C_1$ and its detailed model under the Henri-Michaelis-Menten mechanism.}
    \label{fig:futilecycle_singlesite_main}
\end{figure}
\begin{figure}[h!]
    \centering
    \includegraphics[width=0.8\linewidth]{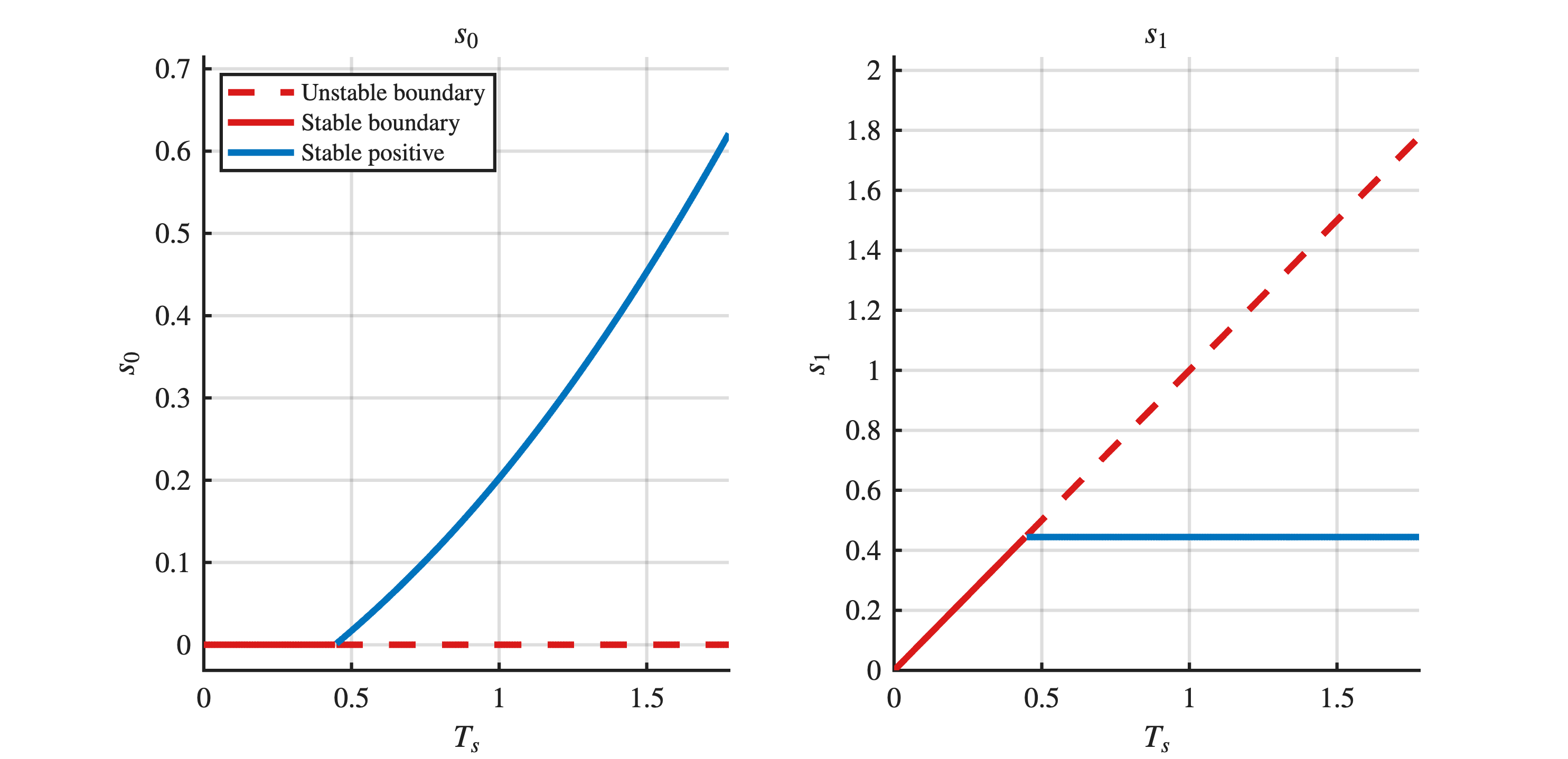}
    \caption{Bifurcation diagram for the single-site $(E,C_1)$ network showing $s_0$ and $s_1$ as functions of the total substrate $T_s$. The boundary steady state (red) is stable for $T_s < k_1/k_2^*$ and loses stability at a transcritical bifurcation where the unique stable positive steady state (blue) emerges. ACR in $S_1$ is reflected in the flat positive steady state branch in the $s_1$ panel. Parameter values: $k_1^+ = 2$, $k_1^- = 1$, $k_1 = 1$, $k_2^+ = 3$, $k_2^- = 0.5$, $k_2 = 1.5$, $T_e = 1$.}
    \label{fig:ec1}
\end{figure}

A single-site futile cycle is one where a substrate is reversibly converted from one form to another, for instance from the unphosphorylated form $S_0$ to the phosphorylated form $S_1$. In a bifunctional enzyme network, a single enzyme species catalyzes both the forward and reverse transformations. Here, the common enzyme $E$ catalyzes the forward reaction $S_0 \to S_1$, forming the intermediate compound $C_1 = S_0\text{-}E$. The compound $C_1$ is bifunctional: it simultaneously serves as the intermediate of the forward reaction and as the enzyme for the reverse reaction $S_1 \to S_0$. We denote this network by $(E, C_1)$, where the two entries list the enzymes for the forward and reverse reactions, respectively. The IDHKP-IDH signaling system in \textit{Escherichia coli} is a prominent biochemical example of this architecture \cite{shinar2009robustness}. We refer the reader to Figure \ref{fig:futilecycle_singlesite_main} for the substrate hypergraph and the detailed reaction network under the Henri-Michaelis-Menten mechanism (the specific reaction steps in which enzyme and substrate bind to form an intermediate compound, which then releases the product). This is one particular \emph{detailed model} of the composite reactions; other detailed models are possible, but we assume the Henri-Michaelis-Menten mechanism throughout this paper.

The full mass-action system and its derivation are given in the Supplementary Material. 
We introduce the shorthand notation
\begin{equation} \label{not:k*}
    k_\ell^* \coloneqq \frac{k_\ell k_\ell^+}{k_\ell + k_\ell^-},
\end{equation}
which represents the effective catalytic rate of reaction $\ell$.

The main results for the single-site network are as follows.
\begin{theorem}\label{prop:BSS_1site}
    For any choice of positive rate constants and $T_e, T_s>0$, there is a unique boundary steady state with $e=T_e$ and $s_1=T_s$, which is locally stable if and only if $T_s < k_1/k_2^*$.
\end{theorem}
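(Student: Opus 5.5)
The plan is to write down the mass-action system explicitly, classify every steady state with a zero coordinate, and then analyse the linearization at the single boundary point (the boundary analogue of the next-generation-matrix criterion of Theorem~\ref{thm:ngm}).

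\textbf{Setup.} With $s_0,s_1,e,c_1,c_2$ the concentrations of $S_0,S_1,E,C_1,C_2$, the reactions in Figure~\ref{fig:futilecycle_singlesite_main} give
\begin{gather*}
\dot s_0 = -k_1^+ s_0 e + k_1^- c_1 + k_2 c_2, \qquad \dot s_1 = k_1 c_1 - k_2^+ s_1 c_1 + k_2^- c_2,\\
\dot e = -k_1^+ s_0 e + (k_1^-+k_1)c_1, \qquad \dot c_2 = k_2^+ s_1 c_1 - (k_2^-+k_2)c_2,
\end{gather*}
with $\dot c_1$ fixed by the conservation laws
\[
T_e = e + c_1 + c_2, \qquad T_s = s_0 + s_1 + c_1 + 2c_2 .
\]
The factor $2$ appears because $C_2$ is built from $S_1$ and $C_1 = S_0\text{-}E$, so it contains two substrate units.

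\textbf{Existence and uniqueness.} I will show that any steady state with a zero coordinate has $c_1=0$, by checking each coordinate in turn.
\begin{itemize}
\item If $e=0$ or $s_0=0$, then $\dot e=0$ forces $c_1=0$.
\item If $s_1=0$, then $\dot c_2=0$ gives $c_2=0$, and $\dot s_1=0$ then gives $k_1c_1=0$.
\item If $c_2=0$, then $\dot c_2=0$ gives $s_1c_1=0$, which reduces to one of the previous cases or to $c_1=0$.
\end{itemize}
Once $c_1=0$, the equation $\dot c_2=0$ gives $c_2=0$. Then $e=T_e>0$, so $\dot e=0$ forces $s_0=0$, and hence $s_1=T_s$. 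This gives the unique boundary steady state $x^*=(0,T_s,T_e,0,0)$.

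\textbf{Stability.} Take the vanishing species to be $\tilde x=(s_0,c_1,c_2)$. Fixing $\tilde x=0$ in $\mathcal C$ forces $e=T_e$ and $s_1=T_s$, so the boundary face meets $\mathcal C$ only at $x^*$. The face-stability hypothesis of Theorem~\ref{thm:ngm} therefore holds trivially.

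Moreover, $\dim S = 3$ equals the number of vanishing coordinates, since $e$ and $s_1$ are determined by $\tilde x$ through the conservation laws. Hence the linearization of the flow on $\mathcal C$ at $x^*$ is exactly the Jacobian in $\tilde x$. Writing $a=k_1^+T_e$ and $b=k_2^+T_s$, this Jacobian is
\[
J=\begin{pmatrix} -a & k_1^- & k_2\\ a & -(k_1^-+k_1+b) & k_2^-+k_2\\ 0 & b & -(k_2^-+k_2)\end{pmatrix}.
\]
$J$ is Metzler, i.e.\ $-J$ is a $Z$-matrix. Splitting $J=F-V$ with $F\ge0$ and $V$ an invertible $Z$-matrix, the standard equivalence ($\rho(FV^{-1})<1$ iff $V-F$ is a nonsingular $M$-matrix) reduces everything to deciding when $-J$ is a nonsingular $M$-matrix. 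For a $Z$-matrix this is the same as all leading principal minors being positive.

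\textbf{The minors.} The first two are immediate:
\[
a>0, \qquad a(k_1^-+k_1+b)-ak_1^- = a(k_1+b)>0 .
\]
For the determinant, add row one of $-J$ to row two. Expanding along the first column then gives
\[
\det(-J) = a\bigl[(k_1+b)(k_2^-+k_2) - b(k_2^-+2k_2)\bigr] = a\bigl[k_1(k_2^-+k_2) - k_2 k_2^+ T_s\bigr].
\]
This is positive exactly when $T_s < k_1/k_2^*$, using $k_2^*=k_2k_2^+/(k_2+k_2^-)$ from \eqref{not:k*}.

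So for $T_s<k_1/k_2^*$ all eigenvalues of $J$ have negative real part, and $x^*$ is locally asymptotically stable. For $T_s>k_1/k_2^*$ we have $\det J = -\det(-J) > 0$ while $J$ is $3\times 3$. By Perron--Frobenius for Metzler matrices, the dominant eigenvalue is then real and positive, so $x^*$ is unstable.

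\textbf{Main obstacle.} The delicate point is the threshold case $T_s=k_1/k_2^*$, where $J$ has a zero eigenvalue and linearization is inconclusive. I would argue instability there (or at least the failure of asymptotic stability) with a one-dimensional center-manifold computation along the Perron eigenvector. If that gets messy, I would read ``locally stable'' in the statement in the hyperbolic sense, consistent with Theorem~\ref{thm:ngm}.
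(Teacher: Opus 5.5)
Your argument is correct on the range the paper's proof actually covers ($x^*$ stable for $T_s<k_1/k_2^*$, unstable for $T_s>k_1/k_2^*$), and the existence/uniqueness part matches the paper's. The stability part takes a different route.

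\textbf{Comparison with the paper.} The paper splits the vanishing-species system with $\mathcal F=(k_2c_2,0,0)$ and inverts $V$ explicitly. It finds that $FV^{-1}$ has rank one with $\rho(FV^{-1})=k_2^*T_s/k_1$, and then invokes Theorem~\ref{thm:ngm}.

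You instead use that $(s_0,c_1,c_2)$ is a coordinate chart for $\mathcal C$. The substitutions $e=T_e-c_1-c_2$ and $s_1=T_s-s_0-c_1-2c_2$ only enter through terms multiplied by a vanishing species, so the restricted Jacobian really is your $J$. You then test whether $-J$ is a nonsingular $M$-matrix by leading minors, and your minors and determinant are right.

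The two computations express the same fact: $\det V=k_1^+T_e\,k_1(k_2^-+k_2)$, and since $FV^{-1}$ has rank one, $\det(-J)=\det(V)\bigl(1-\rho(FV^{-1})\bigr)$. Your version is self-contained: plain linearization plus Perron--Frobenius, with no $V^{-1}$ and no boundary-NGM theorem. The paper's version yields the reproduction-number threshold. It also still works when the face meets $\mathcal C$ in more than a point, e.g.\ for the living boundary steady state of $(E,E,C_2,C_1)$, where the vanishing coordinates no longer chart $\mathcal C$.

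\textbf{Two small repairs.} First, the NGM equivalence you quote needs $V$ to be a nonsingular $M$-matrix, not just an invertible $Z$-matrix; you never actually use it, since you test $-J$ directly. Second, for instability \emph{relative to} $\mathcal C$, note that $J$ is irreducible. Its left Perron vector $w$ is therefore positive, and $w^\top\tilde x$ grows near $0$ on the orthant.

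\textbf{The threshold.} Your primary plan here would fail, because at $T_s=k_1/k_2^*$ the point $x^*$ is in fact asymptotically stable. Put $L=s_0+c_1+\frac{k_2^-+2k_2}{k_2^-+k_2}\,c_2$, which at the threshold is the left null vector of $J$. Along the full nonlinear flow one gets exactly $\dot L=c_1(k_2^*s_1-k_1)$. Since $s_1=T_s-s_0-c_1-2c_2$ on $\mathcal C$, at the threshold this equals $-k_2^*c_1(s_0+c_1+2c_2)\le 0$. LaSalle then finishes the argument, because the only invariant subset of $\{c_1=0\}$ in $\mathcal C$ is $x^*$. Equivalently, the center-manifold reduction is $\dot z=-Cz^2$ with $C>0$, which attracts on the side $z>0$ lying inside the orthant.

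So the literal ``only if'' is false at equality, even under the paper's own definition of local asymptotic stability. The paper's proof also establishes only the strict dichotomy, and your fallback (hyperbolic) reading is the one to adopt. The same $L$ gives a short global proof of stability for all $T_s\le k_1/k_2^*$.
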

\begin{theorem}\label{thm:singlesite}
    For any choice of positive rate constants and every $T_e > 0$, a positive steady state exists if and only if $T_s > k_1/k_2^*$. Moreover, this positive steady state is unique and locally asymptotically stable.
\end{theorem}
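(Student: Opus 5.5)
The plan is to write out the mass-action system for $(s_0,s_1,e,c_1,c_2)$ and solve the steady-state equations explicitly. There are two conservation laws: $T_e = e + c_1 + c_2$, because both $C_1$ and $C_2$ carry one copy of $E$, and $T_s = s_0 + s_1 + c_1 + 2c_2$, because $C_2 = S_1 + C_1$ carries two substrate units. The stoichiometric subspace therefore has dimension $3$.

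\textbf{Existence and uniqueness.} At steady state, $\dot c_2 = 0$ gives $c_2 = k_2^+ s_1 c_1/(k_2^-+k_2)$. Substituting this into $\dot s_1 = k_1 c_1 - k_2^+ s_1 c_1 + k_2^- c_2$ gives
\[
\dot s_1 = c_1\,(k_1 - k_2^* s_1).
\]
So every positive steady state has $s_1 = q := k_1/k_2^*$; this is the ACR in $S_1$. The equation $\dot c_1 = 0$ then reduces to $k_1^+ s_0 e = (k_1^-+k_1)c_1$. Hence $c_1 = s_0 e/K_1$ with $K_1 = (k_1^-+k_1)/k_1^+$, and $c_2 = a\,c_1$ for the explicit constant $a = k_2^+ q/(k_2^-+k_2) > 0$.

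I will parametrize the positive steady states by $s_0 > 0$. The enzyme law gives
\[
e(s_0) = \frac{T_e}{1 + (1+a)s_0/K_1},
\]
so $c_1(s_0) = s_0 e(s_0)/K_1$ is strictly increasing and bounded. The substrate law becomes
\[
T_s - q = s_0 + (1+2a)\,c_1(s_0),
\]
whose right-hand side is strictly increasing from $0$ to $\infty$ on $(0,\infty)$. Hence a positive solution exists if and only if $T_s > q = k_1/k_2^*$, and when it exists it is unique. All the other coordinates are then positive automatically.

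\textbf{Local asymptotic stability.} I will compute the Jacobian at the positive steady state and restrict it to the stoichiometric subspace. Concretely, I eliminate $e$ and $s_0$ using the two conservation laws, which leaves a $3\times 3$ system in $(s_1,c_1,c_2)$. I then simplify the entries using the steady-state relations $s_1 = q$, $c_2 = a c_1$, and $k_1^+ s_0 e = (k_1^-+k_1)c_1$.

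Next I compute the characteristic polynomial $\lambda^3 + a_1\lambda^2 + a_2\lambda + a_3$ and verify the Routh--Hurwitz conditions $a_1, a_3 > 0$ and $a_1 a_2 > a_3$. Positivity of $a_1$ should follow from the negative trace, since each diagonal entry is a loss rate. The sign of $a_3$, which is $-\det$ of the restricted Jacobian, should be positive. As a cross-check, it should be proportional to the derivative of $s_0 + (1+2a)c_1(s_0)$ in the parametrization above, which is positive; the positive branch is nondegenerate precisely because that map is monotone.

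\textbf{Main obstacle.} The step I expect to be hardest is the inequality $a_1 a_2 > a_3$. My first attempt will be to expand $a_1 a_2 - a_3$ as a polynomial in the rate constants and the steady-state values, after substituting the relations above, and show that every negative monomial of $a_3$ is cancelled by a matching monomial of $a_1 a_2$. If that bookkeeping becomes unwieldy, I will fall back on one of two alternatives. The first is to exploit the factored form of $\dot s_1$ to find a triangular structure in the Jacobian. The second is to use an injectivity/monotonicity argument: for example, the restricted Jacobian may be a negated $P$-matrix in suitable coordinates, which would give stability directly.
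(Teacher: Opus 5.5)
Your proposal is correct and follows essentially the same route as the paper. The paper parameterizes positive steady states by $u=c_1/e=(k_1^*/k_1)s_0$, which is equivalent to your $s_0$-parameterization (your $a$ equals $k_1/k_2$), and reduces existence and uniqueness to a quadratic with constant term $k_1/k_2^*-T_s$; it then applies Routh--Hurwitz to the $3\times 3$ Jacobian in $(c_1,c_2,s_1)$ obtained by eliminating $e$ and $s_0$. The only difference is that the paper checks the Routh--Hurwitz inequalities with Mathematica, whereas your monomial-cancellation plan would give a hand proof, which is feasible here because $\dot c_2+\dot s_1=k_1c_1-k_2c_2$ produces strong cancellations in the Jacobian.
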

Proofs of both theorems are given in the Supplementary Material. Together, these results imply a transcritical bifurcation at $T_s = k_1/k_2^*$, illustrated in Figure \ref{fig:ec1}.

\begin{theorem}[ACR for the $(E,C_1)$ network \cite{shinar2010structural, joshi2026bifunctional}]\label{thm:ACR_1site}
The single-site network $(E,C_1)$ has ACR in species $S_1$ with ACR value $k_1/k_2^*$.
\end{theorem}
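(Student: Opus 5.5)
We prove ACR in $S_1$ with value $k_1/k_2^*$ by writing down the mass-action system for the $(E,C_1)$ network of Figure~\ref{fig:futilecycle_singlesite_main} and reading off a relation forced by the steady state equations. The variables are $s_0,s_1,e,c_1,c_2$, and the equations are
\begin{align*}
\dot c_1 &= k_1^+ s_0 e - (k_1^-+k_1)c_1 - k_2^+ s_1 c_1 + (k_2^-+k_2) c_2,\\
\dot c_2 &= k_2^+ s_1 c_1 - (k_2^-+k_2) c_2,
\end{align*}
together with the equations for $s_0,s_1,e$. There are two conservation laws:
\[
e+c_1+c_2=T_e, \qquad s_0+s_1+c_1+2c_2=T_s.
\]
The second law holds because $C_2$ contains the substrate of $C_1$ plus $S_1$.

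First, at any steady state, $\dot c_2=0$ gives $c_2 = \frac{k_2^+}{k_2^-+k_2}\, s_1 c_1$. Substituting this into $\dot c_1=0$, the binding and unbinding terms of the second reaction cancel, leaving
\[
k_1^+ s_0 e = (k_1^-+k_1)c_1 .
\]

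Next, we use the equation for $e$ (or equivalently for $s_1$). The $s_1$ equation is
\[
\dot s_1 = k_1 c_1 - k_2^+ s_1 c_1 + k_2^- c_2 .
\]
At steady state, with the expression for $c_2$ inserted, this becomes
\[
c_1\Bigl(k_1 - k_2^+ s_1 + \tfrac{k_2^- k_2^+}{k_2^-+k_2} s_1\Bigr)
= c_1\Bigl(k_1 - \tfrac{k_2 k_2^+}{k_2+k_2^-} s_1\Bigr)
= c_1\,(k_1 - k_2^* s_1)=0 .
\]
At a positive steady state $c_1>0$, so $s_1 = k_1/k_2^*$. This shows that $s_1$ takes the same value at every positive steady state, and that this value depends only on rate constants, which is the second ACR condition.

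It remains to verify the first ACR condition, namely that a positive steady state exists for every choice of rate constants (for suitable totals). Theorem~\ref{thm:singlesite} gives existence whenever $T_s>k_1/k_2^*$. Taking any such $T_s$ and any $T_e>0$ supplies the required positive steady state for every choice of positive rate constants.

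The main point needing care is the cancellation step. We must use the correct species equation, namely the one for $s_1$ rather than $s_0$, so that the factor $c_1$ can be pulled out cleanly. We must also confirm that all remaining variables can be chosen positive, which is exactly what Theorem~\ref{thm:singlesite} provides.
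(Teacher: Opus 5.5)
Your proof is correct and is essentially the paper's argument: the paper invokes the edge/node conditions of \cite{joshi2026bifunctional} (Lemma S12.3) to get $k_1^*s_0e=k_1c_1=k_2^*s_1c_1=k_2c_2$, and your two substitutions ($\dot c_2=0$, then $\dot s_1=0$) derive by hand exactly the piece $k_1c_1=k_2c_2=k_2^*s_1c_1$ that forces $s_1=k_1/k_2^*$; the only other difference is that the paper gets existence directly from the resulting parameterization ($c_1=ue$, $c_2=(k_1/k_2)ue$, $s_0=(k_1/k_1^*)u$ for any $e,u>0$) rather than from Theorem~\ref{thm:singlesite}. One small correction to your side remarks: $\dot e=0$ is not equivalent to $\dot s_1=0$, since by enzyme conservation it holds automatically once $\dot c_1=\dot c_2=0$; it is $\dot s_0=0$ that is equivalent to $\dot s_1=0$ given those two, so the $s_0$ equation would work just as well when combined with your first step $k_1^+s_0e=(k_1^-+k_1)c_1$.
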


This result was first established for the $(E,C_1)$ network in \cite{shinar2010structural} using the structural deficiency-based theory of ACR. The hypergraph approach of \cite{joshi2026bifunctional} provides an alternative, more direct derivation from the network structure, and is the approach used in the proof given in the Supplementary Material.

The single-site futile cycle is a subnetwork of the dual-site futile cycle, so these results will be useful when studying certain boundary steady states of the dual-site system. 

\section{Dual-site futile cycle with bifunctional enzymes}
\label{sec:dualsite}

Multisite phosphorylation is a common biochemical motif, and the dual-site futile cycle is the natural extension of the single-site system in the previous section. 
The range of possible dynamics is considerably richer when going from one site to two sites, as the remainder of the paper will reveal. 
We consider the situation where the forward steps are catalyzed by a single common enzyme $E$ while the backward steps are catalyzed by one of the intermediate compounds produced in the forward steps. 
In other words, the backward step enzymes $\{\EE_1, \EE_2\}$ are drawn from the set $\{C_1, C_2\}$. 
Fixing the order of reactions $(S_0 \to S_1, S_1 \to S_2, S_2 \to S_1, S_1 \to S_0)$, each network is specified by simply listing the enzyme that acts on the corresponding reaction. Since each of $\EE_1$ and $\EE_2$ can be one of two possible compounds, we have 4 distinct networks, namely, $(E,E,C_1,C_1)$, $(E,E,C_1,C_2)$, $(E,E,C_2,C_1)$, and $(E,E,C_2,C_2)$.
See Figure \ref{fig:dualsite} for the substrate hypergraph and detailed reaction network under the Henri-Michaelis-Menten mechanism, which we assume throughout.

Our analysis follows a unified approach for all four networks. We derive a steady state parameterization in terms of a single variable $u \coloneqq c_1/c_2$, and show that the positive steady states are in one-to-one correspondence with the positive roots of a single-variable polynomial $P(u)$ whose coefficients depend on the rate constants and total concentrations $T_e$, $T_s$ (Theorem \ref{thm:1-1}). We then study the number and existence of positive roots using Descartes' rule of signs, treating $T_s$ as a bifurcation parameter while fixing the rate constants and $T_e$. Stability of each steady state is determined by computing the eigenvalues of the Jacobian matrix.

 Before establishing steady state parameterizations for the four networks, we first state their ACR properties, which can be obtained from \cite{joshi2024bifunctional, joshi2026bifunctional}.

\begin{theorem}[ACR for dual-site networks \cite{joshi2024bifunctional, joshi2026bifunctional}]\label{thm:ACR}
The following ACR results hold for the four dual-site networks.
\begin{enumerate}
    \item The network $(E,E,C_2,C_1)$ has ACR in species $S_1$ and $S_2$ with ACR values $k_1/k_4^*$ and $k_2/k_3^*$ respectively.
    \item The network $(E,E,C_1,C_1)$ has ACR in species $S_1$ with ACR value $k_1/k_4^*$.
    \item The network $(E,E,C_1,C_2)$ does not have ACR in any species.
    \item The network $(E,E,C_2,C_2)$ has ACR in species $S_2$ with ACR value $k_2/k_3^*$.
\end{enumerate}
\end{theorem}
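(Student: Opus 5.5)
The approach is to write down the mass-action steady-state equations for the general network $(E,E,\EE_1,\EE_2)$ once, reduce them to a small set of flux-balance identities, and then specialize to the four choices of $(\EE_1,\EE_2)$. Write $\varepsilon_1,\varepsilon_2$ for the concentrations of $\EE_1,\EE_2$.

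\textbf{Step 1: reduce to flux-balance identities.} Setting $\dot c_\ell=0$ for each intermediate gives
\[
k_1 c_1 = k_1^* s_0 e\cdot\tfrac{k_1^+}{k_1^*}\tfrac{k_1}{k_1+k_1^-}\,(\text{i.e. } k_1^+s_0e=(k_1^-+k_1)c_1),\quad
k_2c_2=k_2^*s_1e,\quad k_3c_3=k_3^*s_2\varepsilon_2,\quad k_4c_4=k_4^*s_1\varepsilon_1 .
\]
Here the $C_3$ and $C_4$ identities use the shorthand \eqref{not:k*}. Substituting these into $\dot s_0=0$ and $\dot s_2=0$ leaves the two \emph{net-flux} identities
\[
k_1c_1 = k_4^* s_1 \varepsilon_1, \qquad k_2 c_2 = k_3^* s_2 \varepsilon_2 .
\]
The remaining equations need checking but should be automatic. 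For $\dot e$, the net $C_1$ and $C_2$ fluxes vanish. For $\dot c_1$ or $\dot c_2$, any extra binding/unbinding terms from their enzymatic role cancel by the $C_3$/$C_4$ balances. For $\dot s_1$, it is the negative of the sum of the others modulo conservation. So positive steady states correspond exactly to positive solutions of these four intermediate relations together with the two net-flux identities.

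\textbf{Step 2: read off the ACR values.} Dividing by the positive concentrations of the enzymes involved gives the following.
For $(E,E,C_2,C_1)$, where $\varepsilon_1=c_1$ and $\varepsilon_2=c_2$, we get $s_1=k_1/k_4^*$ and $s_2=k_2/k_3^*$.
For $(E,E,C_1,C_1)$, the first identity gives $s_1=k_1/k_4^*$, while the second gives $s_2=k_2c_2/(k_3^*c_1)$.
For $(E,E,C_2,C_2)$, the second gives $s_2=k_2/k_3^*$, while $s_1=k_1c_1/(k_4^*c_2)$.
For $(E,E,C_1,C_2)$, where $\varepsilon_1=c_2$ and $\varepsilon_2=c_1$, we get $s_1=k_1u/k_4^*$ and $s_2=k_2/(k_3^*u)$ with $u=c_1/c_2$. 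Hence only the product $s_1s_2$ is fixed.

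\textbf{Step 3: existence and non-invariance of the other coordinates.} Both follow from an explicit positive parameterization.
Choose $c_1,c_2>0$ freely, which determines $u$. Set $s_1,s_2$ by Step 2. Then take $e=k_2c_2/(k_2^*s_1)$, $s_0=(k_1^-+k_1)c_1/(k_1^+e)$, and $c_3,c_4$ from their balance relations. Every such choice yields a positive steady state, with totals $T_e,T_s$ determined a posteriori. This proves existence for every choice of rate constants.

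Varying $c_1$ and $c_2$ then shows that no other species coordinate is invariant:
\begin{itemize}
\item $s_2$ varies in the $(E,E,C_1,C_1)$ case;
\item $s_1$ varies in the $(E,E,C_2,C_2)$ case;
\item both vary in the $(E,E,C_1,C_2)$ case;
\item $e,s_0,c_1,\dots,c_4$ vary under independent scaling of $c_1,c_2$ in all four cases.
\end{itemize}
The main obstacle I expect is Step 1: carefully confirming that the redundant equations for $\dot e$, $\dot c_1$, $\dot c_2$, $\dot s_1$ impose nothing beyond the displayed identities in each network, given that $C_1$ and $C_2$ play double roles. I would handle this by direct substitution, network by network.
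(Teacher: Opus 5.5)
Your proof is correct, but it does not follow the route the paper takes for this statement. The paper does not prove Theorem~\ref{thm:ACR} directly. It imports the result from \cite{joshi2024bifunctional, joshi2026bifunctional}, where it follows from the substrate-hypergraph ACR criterion. That criterion certifies ACR from the hypergraph alone, and therefore for every detailed model of that hypergraph, not only the Henri-Michaelis-Menten one.

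Your argument is instead a direct mass-action computation. It coincides with the machinery the paper develops afterwards in Section~\ref{sec:SSP_2site} for counting positive steady states:
\begin{itemize}
\item your Step~1 is Lemma~\ref{lem:GL_2site}, which the paper itself takes from \cite{joshi2026bifunctional};
\item your flux chains are Lemma~\ref{lem:SSP_2site};
\item your Step~2 is the observation that \eqref{eq:parameters_cu} gives $s_1=\alpha_1 v_1$ and $s_2=\alpha_2 v_2$, with $v_1,v_2$ as in \eqref{eq:change_vars}.
\end{itemize}

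The hypergraph route buys mechanism-independence and a structural explanation, for example of why $S_0$ can never be an ACR species. Your route is self-contained and makes explicit two points that a citation glosses over:
\begin{itemize}
\item Existence of a positive steady state for every choice of rate constants. This holds in some compatibility class, which is all the definition asks; it need not hold in every class, cf.\ the requirement $T_s>k_2/k_3^*$ for $(E,E,C_2,C_2)$.
\item The negative claim in part~3. Here you actually prove something stronger: for every choice of rate constants, all eight coordinates vary along the two-parameter family of positive steady states.
\end{itemize}

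The step you flag as the main obstacle needs no case-by-case check; it is uniform across the four networks. Since $C_3$ and $C_4$ never act as enzymes, $\dot c_3=G_3$ and $\dot c_4=G_4$. Meanwhile $\dot c_1$ and $\dot c_2$ equal $G_1$ and $G_2$ minus the $G_3$ or $G_4$ terms of whichever reverse edges they catalyze. So $\dot c_\ell=0$ for all $\ell$ is equivalent to $G_\ell=0$ for all $\ell$. Then $\dot e=-G_1-G_2$ vanishes automatically. Finally, $\dot s_1=-(\dot s_0+\dot s_2+\dot c_1+\dot c_2+2\dot c_3+2\dot c_4)$ by the substrate conservation law in \eqref{eq:cons_nof}.

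One cosmetic point: the first identity in your Step~1 display is garbled, because the factor multiplying $k_1^*s_0e$ equals $1$. Simply write $k_1c_1=k_1^*s_0e$.
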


The ACR properties for the $(E,E,C_1,C_1)$ and $(E,E,C_2,C_2)$ networks were first identified in \cite{joshi2024bifunctional}. All four results follow from the structure of the substrate hypergraph via \cite{joshi2026bifunctional}, and hold for any detailed model of the respective hypergraphs, not only for the Henri-Michaelis-Menten mechanism studied here.

\subsection{Steady state parameterization}\label{sec:SSP_2site}
The mass-action system of the detailed model in Figure \ref{fig:dualsite} can be written as follows:
\begin{equation}\label{eq:ODE_2site_GL}
\begin{aligned}
&\frac{de}{dt}    = -G_1-G_2, \\
&\frac{dc_1}{dt} = G_1 - \bm{1}_{C_1}(\EE_1)G_3- \bm{1}_{C_1}(\EE_2)G_4,\quad \frac{dc_2}{dt} = G_2 - \bm{1}_{C_2}(\EE_1)G_3- \bm{1}_{C_2}(\EE_2)G_4\\
&\frac{dc_3}{dt}  = G_3,\quad \frac{dc_4}{dt}=G_4,\\
&\frac{ds_0}{dt} = L_0- G_1,\quad \frac{ds_1}{dt}=L_1-G_2-G_4,\quad \frac{ds_2}{dt}=L_2-G_4.
\end{aligned}
\end{equation}
where
\begin{equation}\label{eq:GL}
\begin{aligned}
& G_{\ell} = k_{\ell}^+ s_{T_\ell}\varepsilon_\ell - (k_\ell^- +k_\ell)c_\ell, \quad &&\text{for} \quad \ell \in [1,4] \\
& L_i =  \sum_{\ell: i= H_\ell} k_{\ell} c_{\ell} - \sum_{\ell: i=T_\ell} k_{\ell}c_{\ell} , \quad &&\text{for} \quad i\in[0,2],\\
\end{aligned}
\end{equation}
with $H_\ell,T_\ell$ denoting the indices of the substrates at the head and tail of edge $\ell$ respectively. 

The conservation laws of the detailed model in Figure \ref{fig:dualsite} are:
\begin{equation} \label{eq:cons_nof}
    \begin{aligned}
        &T_s = s_0 + s_1 + s_2 + c_1 + c_2 + 2c_3 + 2 c_4, \\
        &T_e = e + c_1 + c_2 + c_3 + c_4. 
    \end{aligned}
\end{equation}
We have the following lemma based on the results in \cite{joshi2026bifunctional}.

\begin{lemma}[\cite{joshi2026bifunctional}, Lemma S12.3]\label{lem:GL_2site}
    The steady state equations for \eqref{eq:ODE_2site_GL} are equivalent to
\begin{align*}
    &\text{(edge conditions)} \quad G_\ell=0 \quad \text{for}\quad \ell\in[1,4]\\
    &\text{(node conditions)}\quad L_i=0 \quad \text{for} \quad i\in[0,2].
\end{align*}
\end{lemma}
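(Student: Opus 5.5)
The plan is to exploit the triangular structure of \eqref{eq:ODE_2site_GL}: at a steady state the edge conditions can be read off from the equations for the compounds, and the node conditions then follow from the substrate equations. The converse direction is a direct substitution.

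\emph{Forward direction.} Suppose $x^*$ is a steady state, so every right-hand side in \eqref{eq:ODE_2site_GL} vanishes.
\begin{itemize}
\item The equations $\frac{dc_3}{dt}=G_3$ and $\frac{dc_4}{dt}=G_4$ contain a single edge term each, so they give $G_3=G_4=0$ immediately.
\item Substituting into the $c_1$ and $c_2$ equations, the indicator-weighted terms $\bm{1}_{C_j}(\EE_i)G_{3}$ and $\bm{1}_{C_j}(\EE_i)G_4$ drop out. This leaves $G_1=0$ and $G_2=0$, for every choice of $\EE_1,\EE_2\in\{C_1,C_2\}$.
\item The equation $\frac{de}{dt}=-G_1-G_2$ is then automatically satisfied and imposes nothing new.
\item Each substrate equation has the form $\frac{ds_i}{dt}=L_i-(\text{a linear combination of the }G_\ell)$. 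Since all $G_\ell$ now vanish, $\frac{ds_i}{dt}=0$ reduces to $L_i=0$ for $i=0,1,2$.
\end{itemize}
The only point needing care is that the substrate equations involve no terms other than $L_i$ and the $G_\ell$. This is checked by writing out the mass-action terms from Figure~\ref{fig:dualsite}: each binding/unbinding pair of edge $\ell$ contributes $\pm G_\ell$ to its tail substrate, and each catalytic step contributes $k_\ell c_\ell$ to the head and is accounted for by $L_i$. I would verify this bookkeeping edge by edge, since it is where a sign or index slip could occur.

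\emph{Converse direction.} If $G_\ell=0$ for all $\ell$ and $L_i=0$ for all $i$, then every right-hand side in \eqref{eq:ODE_2site_GL} is a linear combination of these quantities and hence vanishes. So $x^*$ is a steady state.

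Two further remarks complete the argument.
\begin{itemize}
\item Since each $k_\ell c_\ell$ appears in \eqref{eq:GL} once with a plus sign (at the head $H_\ell$) and once with a minus sign (at the tail $T_\ell$), we have $L_0+L_1+L_2\equiv 0$. Thus only two node conditions are independent. This does not affect the equivalence, but it matches the count of conservation laws in \eqref{eq:cons_nof}.
\item The main obstacle is purely the verification that the substrate equations have exactly the stated form for all four enzyme assignments. The elimination argument itself is immediate once the triangular structure $c_3,c_4\Rightarrow c_1,c_2\Rightarrow s_i$ is used.
\end{itemize}
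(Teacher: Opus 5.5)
Your proof is correct. The paper gives no argument of its own here. It imports the statement from the general substrate-hypergraph framework of \cite{joshi2026bifunctional}; the same cited lemma is also invoked for the single-site system.

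Your direct elimination is a self-contained verification for this specific system. What makes it work is that the relation ``compound $C_j$ serves as the enzyme of edge $\ell$'' is acyclic: $C_3,C_4$ are never enzymes, and $C_1,C_2$ act only on edges $3,4$. So the compound equations are unitriangular in $(G_1,\dots,G_4)$. The $G_\ell$ can then be peeled off in the order $c_3,c_4$, then $c_1,c_2$, after which the substrate equations collapse to $L_i=0$.

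The citation buys generality across hypergraphs and detailed mechanisms. Your route buys transparency and handles all four enzyme assignments uniformly through the indicators. Being a pointwise algebraic identity, it also visibly covers boundary steady states. That matters because Lemma~\ref{lem:SSP_2site} is asserted for all steady states, not just positive ones.

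Two minor corrections to your side remarks.

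First, the binding/unbinding pair of edge $\ell$ contributes $-G_\ell-k_\ell c_\ell$, not $\pm G_\ell$, to $ds_{T_\ell}/dt$. The $-k_\ell c_\ell$ is one of the negative terms of $L_{T_\ell}$. Doing this bookkeeping also shows that the displayed $s_2$-equation should read $ds_2/dt=L_2-G_3$, since edge $3$ has tail $S_2$. Your argument is immune to this slip, because you only use that each substrate equation is $L_i$ minus some combination of the $G_\ell$.

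Second, $L_0+L_1+L_2\equiv 0$ reflects only the substrate conservation law. The enzyme conservation law appears instead as the redundancy of the $e$-equation, which you already observed. Together they leave $4+3-1=6$ independent conditions, matching $\dim S=8-2$.
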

The following lemma follows directly from Lemma \ref{lem:GL_2site} and the notation \ref{not:k*}.
\begin{lemma}\label{lem:SSP_2site}
    All steady states of \eqref{eq:ODE_2site_GL} must satisfy:
    \[
k_1^*s_0 e = k_1c_1 = k_4^*s_1\varepsilon_1 = k_4c_4, \quad \quad k_2^*s_1 e= k_2c_2 = k_3^*s_2\varepsilon_2 = k_3c_3.  
    \]
\end{lemma}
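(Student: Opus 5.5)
From Lemma \ref{lem:GL_2site}, steady states are exactly the solutions of the edge conditions $G_\ell=0$ for $\ell\in[1,4]$ together with the node conditions $L_i=0$ for $i\in[0,2]$. The plan is to rewrite each edge condition in closed form for the compound $c_\ell$ and then use the node conditions to equate the resulting catalytic fluxes in pairs.

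\textbf{Edge conditions.} Each $G_\ell=0$ reads $k_\ell^+ s_{T_\ell}\varepsilon_\ell=(k_\ell^-+k_\ell)c_\ell$, where $\varepsilon_\ell$ is the enzyme of edge $\ell$. Multiplying both sides by $k_\ell/(k_\ell^-+k_\ell)$ and using the notation \eqref{not:k*} gives
\[
k_\ell^* s_{T_\ell}\varepsilon_\ell = k_\ell c_\ell .
\]
The tails and enzymes are read off from Figure \ref{fig:dualsite}:
\begin{itemize}
\item edge $1$ has tail $S_0$ and enzyme $E$;
\item edge $2$ has tail $S_1$ and enzyme $E$;
\item edge $3$ has tail $S_2$ and enzyme $\EE_2$;
\item edge $4$ has tail $S_1$ and enzyme $\EE_1$.
\end{itemize}
This yields the four identities
\[
k_1^*s_0e=k_1c_1,\quad k_2^*s_1e=k_2c_2,\quad k_3^*s_2\varepsilon_2=k_3c_3,\quad k_4^*s_1\varepsilon_1=k_4c_4.
\]

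\textbf{Node conditions.} By \eqref{eq:GL}, $L_i$ is the sum of $k_\ell c_\ell$ over edges with head $S_i$ minus the sum over edges with tail $S_i$. Node $S_0$ is the head of edge $4$ and the tail of edge $1$, so $L_0=k_4c_4-k_1c_1=0$. Node $S_2$ is the head of edge $2$ and the tail of edge $3$, so $L_2=k_2c_2-k_3c_3=0$. Hence $k_1c_1=k_4c_4$ and $k_2c_2=k_3c_3$. The condition $L_1=0$ is the sum of these two relations (up to sign), so it imposes nothing new.

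\textbf{Assembling.} Chaining the edge identities with the node identities gives the two displayed equality chains of Lemma \ref{lem:SSP_2site}. The only point needing care is matching heads and tails correctly from the figure; there is no genuine obstacle beyond that.
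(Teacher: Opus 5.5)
Your proposal is correct and matches the paper's approach: the paper simply says the lemma follows directly from Lemma~\ref{lem:GL_2site} and the notation $k_\ell^*$. That is, rewrite each edge condition $G_\ell=0$ as $k_\ell^* s_{T_\ell}\varepsilon_\ell = k_\ell c_\ell$, then chain these through the node conditions $L_0=0$ and $L_2=0$ ($L_1=0$ being redundant), exactly as you do, with the correct tail and enzyme assignments (edge~3 uses $\EE_2$, edge~4 uses $\EE_1$).
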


We wish to find a parameterization of the positive steady states. 
We will initiate the process with the choice of parameters $e, c_1, c_2$. The remaining coordinates at steady state are then given by: 
\begin{equation}
    \begin{aligned}
        s_0 = \frac{k_1}{k_1^*} \frac{c_1}{e},  
        \quad 
        s_1 = \frac{k_2}{k_2^*} \frac{c_2}{e} = \frac{k_1}{k_4^*} \frac{c_1}{\varepsilon_1}, 
        \quad 
        s_2 =  \frac{k_2}{k_3^*} \frac{c_2}{\varepsilon_2}, 
        \quad 
        c_4 = \frac{k_1}{k_4} c_1, 
        \quad 
        c_3 = \frac{k_2}{k_3} c_2. 
    \end{aligned}
\end{equation}
Note that $s_1$ can be parameterized in two different ways using either $c_1$ or $c_2$ since it appears in both chains of equality. 
We may use this to eliminate $e$ as a primary parameter.
Define the following convenient change of variables: 
\begin{equation}\label{eq:change_vars}
    c \coloneqq c_2, \quad u \coloneqq \frac{c_1}{c_2}, \quad v_1 \coloneqq \frac{c_1}{\varepsilon_1} \in \{1,u\}, \quad v_2 \coloneqq \frac{c_2}{\varepsilon_2} \in \left\{1,\frac{1}{u}\right\}. 
\end{equation}
The positive steady states of \eqref{eq:ODE_2site_GL} can be parameterized in terms of $\{c,u\}$ as follows. 
\begin{equation} \label{eq:parameters_cu}
    \begin{aligned}
        c_2 &= c,
        \quad
        c_1 = cu, 
        \quad
        e = \gamma \frac{c}{v_1}, 
        \quad 
        s_0 = \alpha_0 u v_1,  
        \quad 
        s_1 = \alpha_1 v_1, 
        \quad 
        s_2 =  \alpha_2 v_2, 
        \quad 
        c_4 = \beta_4 uc, 
        \quad 
        c_3 = \beta_3 c
    \end{aligned}
\end{equation}
where
\[
\gamma=\frac{k_2}{k_1} \frac{k_4^*}{k_2^*},\quad\alpha_0=\frac{k_1}{k_1^*} \frac{k_1}{k_2} \frac{k_2^*}{k_4^*},\quad \alpha_1=\frac{k_1}{k_4^*},\quad \alpha_2=\frac{k_2}{k_3^*}.\quad \beta_4=\frac{k_1}{k_4}, \quad \beta_3=\frac{k_2}{k_3}.
\]
Plugging the steady state parameterization \eqref{eq:parameters_cu} into the conservation laws \eqref{eq:cons_nof} yields: 
\begin{equation} \label{eq:cons_nof_par}
    \begin{aligned}
        &T_s = \alpha_0 uv_1 + \alpha_1 v_1 + \alpha_2 v_2 + c\left((1+ 2\beta_4)u+(1+ 2\beta_3)\right)\\
        &T_e = c \left( \frac{\gamma}{v_1} + (1+\beta_4)u + (1+\beta_3)\right). 
    \end{aligned}
\end{equation}
We combine the two equations in \eqref{eq:cons_nof_par} after eliminating $c$, which gives a single equation in one variable $u$:  
\begin{equation}\label{eq:Ts_in_u}
\begin{aligned}
        T_s &= \alpha_0 uv_1 + \alpha_1 v_1 + \alpha_2 v_2 + T_e \frac{(1+2\beta_4)u+1+2\beta_3}{ \frac{\gamma}{v_1} + (1+\beta_4)u + 1+\beta_3} \\
        &= \alpha_0 uv_1 + \alpha_1 v_1 + \frac{1}{u}\alpha_2 (uv_2) + T_e v_1  \left(\frac{(1+2\beta_4)u+1+2\beta_3}{ \gamma + v_1\left((1+\beta_4)u + 1+\beta_3\right)}\right) \\
\end{aligned}
\end{equation}
For convenience, we define the following variable
\begin{equation}
    w_2:=uv_2 \in \{1,u\}
\end{equation}
and the parameters 
\begin{equation}
    a_i=1+\beta_i,\quad  b_i=1+2\beta_i, \quad \text{for}\quad i\in\{3,4\}. 
\end{equation}
We multiply both sides of \eqref{eq:Ts_in_u} by $u \left( \gamma + v_1\left((1+\beta_4)u + 1+\beta_3\right)\right)$ and rearrange to get a polynomial equation in $u$: 
\begin{equation} \label{eq:mainpoly_u}
\begin{aligned}
        P_{\EE_2,\EE_1}(u):=(\alpha_0 u^2v_1 + \alpha_1 u v_1 + \alpha_2 w_2 - T_s u)\left( \gamma + v_1\left(a_4u + a_3\right)\right) + (b_4u+b_3)T_e u v_1 = 0. 
\end{aligned}
\end{equation}

\begin{theorem}\label{thm:1-1}
 For fixed rate constants and fixed $T_e > 0$ and $T_s > 0$, 
    there is a one-to-one correspondence between the positive roots of $P(u)$ and the positive steady states of \eqref{eq:ODE_2site_GL}. 
\end{theorem}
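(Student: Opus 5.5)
We will construct explicit maps in both directions and check that they are mutually inverse. For the forward direction, let $x^*$ be a positive steady state in the compatibility class determined by $(T_e,T_s)$, and set $u \coloneqq c_1/c_2>0$ and $c\coloneqq c_2>0$. By Lemma~\ref{lem:GL_2site} and Lemma~\ref{lem:SSP_2site}, every coordinate of $x^*$ is then given by the parameterization \eqref{eq:parameters_cu}. Here $v_1\in\{1,u\}$ and $v_2\in\{1,1/u\}$ are fixed by the choice of $\EE_1,\EE_2$. The first equality for $s_1$ in Lemma~\ref{lem:SSP_2site} gives $s_1=\frac{k_2}{k_2^*}\frac{c_2}{e}$, and the second gives $s_1=\alpha_1 v_1$. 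Equating them yields $e=\gamma c/v_1$. Substituting into the conservation laws \eqref{eq:cons_nof} gives \eqref{eq:cons_nof_par}. Because $c>0$ and the bracket in the $T_e$ equation is positive, we may solve
\[
c=\frac{T_e}{\gamma/v_1+a_4u+a_3}.
\]
Substituting this $c$ gives \eqref{eq:Ts_in_u}. Multiplying by the positive quantity $u(\gamma+v_1(a_4u+a_3))$ then gives $P(u)=0$. So $u$ is a positive root of $P$.

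For the reverse direction, take a positive root $u$ of $P$. We define $c$ by the formula above, which is positive, and define all remaining coordinates by \eqref{eq:parameters_cu}. Each coordinate is a positive multiple of a product of powers of $u$ and $c$, so the resulting point $x$ is positive. We then verify that $x$ satisfies the edge conditions $G_\ell=0$ and the node conditions $L_i=0$; by Lemma~\ref{lem:GL_2site}, this makes $x$ a steady state.

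For the edge conditions, the parameterization was derived from the chains in Lemma~\ref{lem:SSP_2site}, so we check directly that $k_1^*s_0e=k_1c_1$, $k_2^*s_1e=k_2c_2$, $k_4^*s_1\varepsilon_1=k_4c_4$ and $k_3^*s_2\varepsilon_2=k_3c_3$. Each of these is equivalent to $G_\ell=0$. For the node conditions, these reduce to $k_1c_1=k_4c_4$ and $k_2c_2=k_3c_3$, which hold by the choice of $\beta_3$ and $\beta_4$. The equation $P(u)=0$ reverses the algebra, because we only divided by nonzero positive quantities. It therefore recovers \eqref{eq:Ts_in_u}, and together with the definition of $c$ this gives both conservation laws. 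So $x$ lies in the correct compatibility class.

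Finally, we check that the two maps are inverse to each other. Composing in either direction returns $u=c_1/c_2$, and $c$ is uniquely determined by $u$ and $T_e$. Hence the correspondence is a bijection.

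The main obstacle is bookkeeping rather than a conceptual difficulty. Across the four networks the meaning of $v_1$, $v_2$ and $w_2$ changes, and in the identity $s_1=\alpha_1v_1$ one has to make sure that the constants $\gamma,\alpha_0$ are consistent with both expressions for $s_1$. We plan to verify the edge identities once in the generic form, using $\varepsilon_1=c_1/v_1$ and $\varepsilon_2=c_2/v_2$, so that all four cases are covered simultaneously. We will also note that no spurious roots are introduced, since every multiplier used in clearing denominators is strictly positive for $u>0$.
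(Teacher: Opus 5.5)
Your proposal is correct and follows essentially the same route as the paper: both use the monomial parameterization \eqref{eq:parameters_cu} in $(c,u)$, solve the enzyme conservation law uniquely for $c>0$, and clear the positive factor $u\left(\gamma+v_1(a_4u+a_3)\right)$ to reach $P(u)=0$. The paper's proof is much terser and only argues positive root $\mapsto$ unique $c>0$ $\mapsto$ unique steady state, taking the parameterization as given. Your explicit check of the edge and node conditions, the conservation laws, and the mutual inverse property fills in details the paper leaves implicit.
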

\begin{proof}
    From \eqref{eq:cons_nof_par}, we have
    \[
    c=\frac{T_e}{\frac{\gamma}{v_1}+(1+\beta_4)u+(1+\beta_3)}.
    \]
    Since $v_1\in\{1,u\}$, for each solution $u>0$ of \eqref{eq:mainpoly_u}, there is exactly one solution $c>0$ of \eqref{eq:cons_nof_par}. From the parameterization \eqref{eq:parameters_cu}, there is exactly one corresponding positive steady state of \eqref{eq:ODE_2site_GL}.
\end{proof}

Next, we study the existence, uniqueness, and stability of the steady states of the four networks in the dual-site class \eqref{eq:ODE_2site_GL}.

\subsection{The $(E,E,C_2,C_1)$ network}

\begin{figure}[h!]
    \centering
\includegraphics[width=1\linewidth]{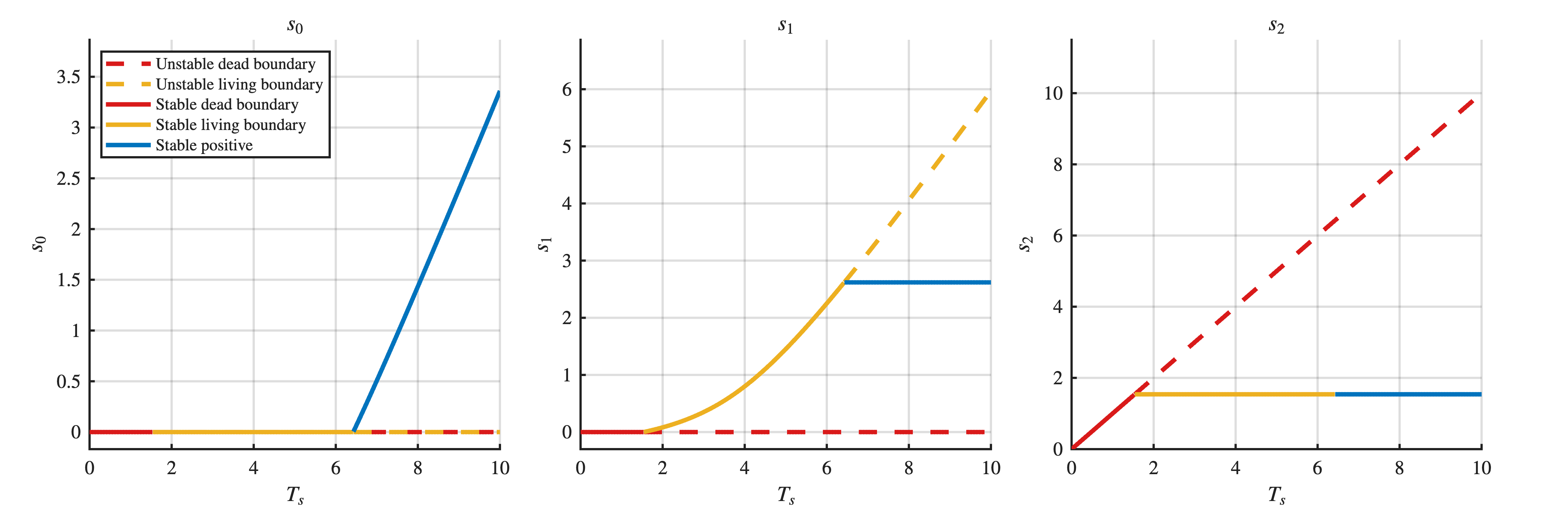}
    \caption{Bifurcation diagram for the $(E,E,C_2,C_1)$ network showing $s_0$, $s_1$, $s_2$ in the case of one positive steady state. The dead boundary steady state (red, stable for $T_s < k_2/k_3^*$, unstable for $T_s > k_2/k_3^*$), the living boundary steady state (orange, emerging at $T_s = k_2/k_3^*$ via a transcritical bifurcation, stable for $k_2/k_3^* < T_s < T_s^{2,1}$, unstable for $T_s > T_s^{2,1}$), and the positive steady state (blue, emerging at $T_s = T_s^{2,1}$ via a transcritical bifurcation with the living boundary steady state) are shown. ACR in both $S_1$ and $S_2$ is visible as flat branches in the corresponding panels. Parameter values: $k_1^+ = 3.33$, $k_1^- = 1.04$, $k_1 = 1.04$, $k_2^+ = 2$, $k_2^- = 1$, $k_2 = 1$, $k_3^+ = 1.3$, $k_3^- = 1$, $k_3 = 1$, $k_4^+ = 1.08$, $k_4^- = 2.15$, $k_4 = 1.25$, $T_e = 1.8$.}
    \label{fig:eec2c1_1pss}
\end{figure}

\begin{figure}[h!]
    \centering
\includegraphics[width=1\linewidth]{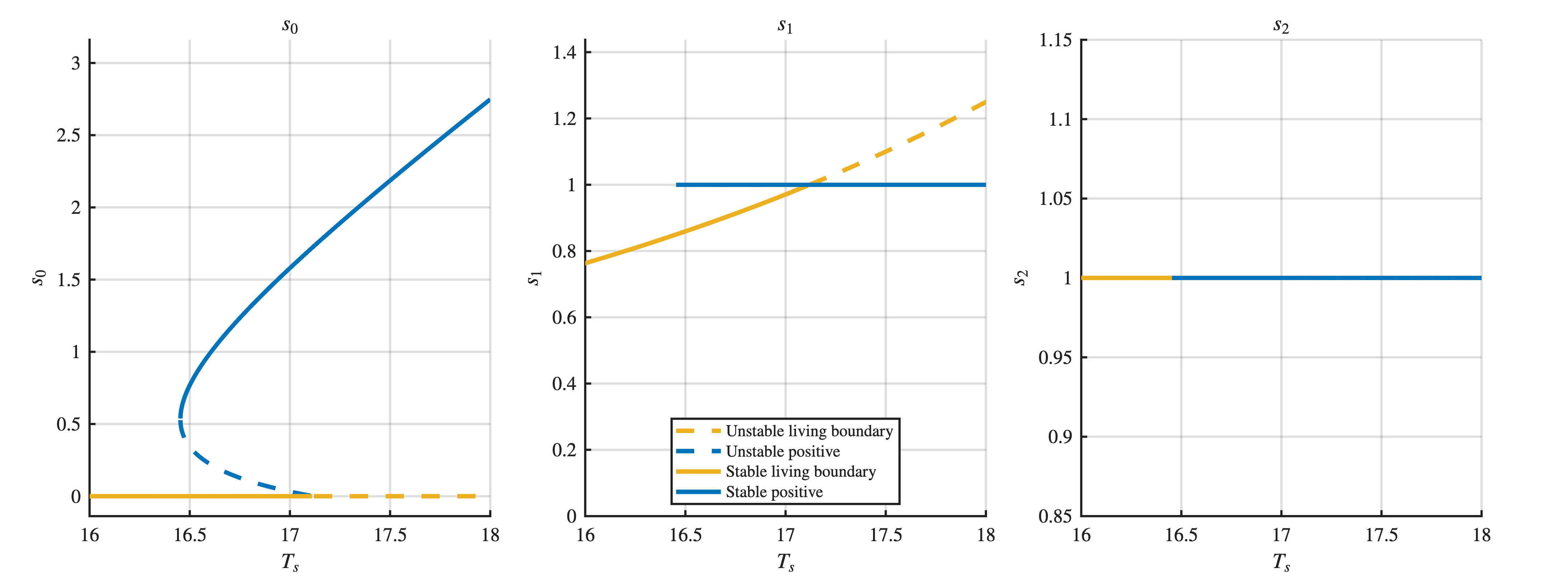}
    \caption{Bifurcation diagram for the $(E,E,C_2,C_1)$ network showing $s_0$, $s_1$, $s_2$ in the case of two positive steady states. The dead boundary steady state branch is omitted (see Figure~\ref{fig:eec2c1_1pss}). The living boundary steady state (orange) and two positive steady state branches (blue) are shown: a stable and an unstable positive steady state emerge together at a saddle-node bifurcation, and the unstable branch collides with the living boundary steady state at $T_s = T_s^{2,1}$ in a backward transcritical bifurcation. ACR in both $S_1$ and $S_2$ is visible as flat branches. Parameter values: $k_1^+ = 38.33$, $k_1^- = 2.04$, $k_1 = 2.04$, $k_2^+ = 6$, $k_2^- = 3$, $k_2 = 3$, $k_3^+ = 6$, $k_3^- = 1$, $k_3 = 1$, $k_4^+ = 4.08$, $k_4^- = 8.15$, $k_4 = 8.15$, $T_e = 10.8$.}
    \label{fig:eec2c1_2pss}
\end{figure}

In this subsection, we study the dynamics of the $(E,E,C_2,C_1)$ network. Since the enzymes of the reverse chain are $\EE_2=C_2$ and $\EE_1=C_1$, we have $v_1=1$ and $v_2=1$ (thus $w_2=u$). At positive steady states, equation \eqref{eq:mainpoly_u} becomes $Q(u)=0$ where
    \begin{equation} \label{eq:poly_c1c2}
Q(u):=\frac{P_{C_2,C_1}(u)}{u}=(u^{2}\,\alpha_{0} a_4
+ u\!\left(
b_4T_{e}
+ (a_3+\gamma) \alpha_{0}
+ a_4(\alpha_{1}
+ \alpha_{2}
- T_s)
\right)
+ \left(
b_{3}T_{e}
+ (a_3+\gamma)(\alpha_{1} + \alpha_2-T_s)
\right).
    \end{equation}

\begin{theorem}\label{thm:EEC2C1}
    Consider the model $(E,E,C_2,C_1)$. 
    \begin{enumerate}
        \item (Dead boundary steady state) For any choice of positive rate constants and $T_e,T_s>0$, there is a unique dead boundary steady state with $e=T_e$ and $s_2=T_s$. This steady state is locally stable if and only if $T_s<k_2/k_3^*$.
        \item (Living boundary steady state) There exists a unique living boundary steady state if $T_s>k_2/k_3^*$. This steady state is locally stable if and only if $T_s < T_s^{2,1}$ where
        \[
        T_s^{2,1}=\alpha_1+\alpha_2+\frac{b_3T_e}{a_3+\gamma}=\frac{k_1}{k_4^*} + \frac{k_2}{k_3^*} + \frac{\left( 1 + \frac{2k_2}{k_3}\right)  T_e}{ \left(1 + \frac{k_2}{k_3} \right)+\frac{k_2}{k_1} \frac{k_4^*}{k_2^*}}.
        \]
        The living boundary steady state corresponds to the sub-network $S_1 \rightleftarrows S_2$ being ON while the reactions producing and consuming $S_0$ are switched OFF.
        \item (Positive steady state existence) For any choice of positive rate constants and any $T_e>0$, there is exactly one positive steady state for all $T_s > T_s^{2,1}$.
        \item (Positive steady state capacity) For any choice of positive rate constants and $T_e, T_s>0$, there are at most two positive steady states. 
        Furthermore, there exists a choice of positive rate constants and $T_e, T_s>0$ for which there are two positive steady states.
        
    \end{enumerate}
\end{theorem}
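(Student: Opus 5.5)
The plan is to classify all steady states through the chain equalities of Lemma~\ref{lem:SSP_2site}. With $\EE_1=C_1$ and $\EE_2=C_2$ these read
\[
k_1^* s_0 e = k_1 c_1 = k_4^* s_1 c_1 = k_4 c_4,\qquad k_2^* s_1 e = k_2 c_2 = k_3^* s_2 c_2 = k_3 c_3 .
\]
Positive steady states will then be handled by the quadratic $Q$ in \eqref{eq:poly_c1c2} together with Theorem~\ref{thm:1-1}.

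\textbf{Boundary classification.} First, $e=0$ is impossible: it forces $c_1=c_2=0$, hence $c_3=c_4=0$, contradicting $T_e>0$. So $e>0$.
\begin{itemize}
\item If $c_2>0$, then $s_2=k_2/k_3^*$ and $s_1>0$.
\item If $c_1>0$, then $s_1=k_1/k_4^*$ and $s_0>0$.
\end{itemize}
A boundary steady state therefore falls into one of two cases.
\begin{itemize}
\item $c_1=c_2=0$. Then $s_1=0$ and, since $e>0$, $s_0=0$. We get $c_3=c_4=0$, $e=T_e$, $s_2=T_s$, at which every reaction rate vanishes. This is the dead state.
\item $c_1=0$ and $c_2>0$. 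Then $s_0=c_4=0$ and all remaining coordinates are positive. This is the living state.
\end{itemize}
On the face $\{s_0=c_1=c_4=0\}$ the dynamics is exactly the single-site network $(E,C_1)$ of Section~\ref{sec:SSP_1site}, relabeled via $S_0\mapsto S_1$, $S_1\mapsto S_2$, $C_1\mapsto C_2$, $C_2\mapsto C_3$. Its conservation laws $T_s=s_1+s_2+c_2+2c_3$ and $T_e=e+c_2+c_3$ match those of the single-site network. Theorem~\ref{thm:singlesite}, applied with $k_1\mapsto k_2$ and $k_2^*\mapsto k_3^*$, then gives a unique living state exactly when $T_s>k_2/k_3^*$. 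The same theorem shows this state is locally asymptotically stable within its face, which is the hypothesis needed for Theorem~\ref{thm:ngm}.

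\textbf{Stability via Theorem~\ref{thm:ngm}.}
\begin{itemize}
\item \emph{Dead state.} The vanishing species are $s_0,s_1,c_1,c_2,c_3,c_4$. Their face meets $\mathcal C$ only in the point itself, so stability within the face is trivial. In the linearization, the $c_1$-loop (through $k_4^+ s_1 c_1$) is quadratic and drops out. The remaining Jacobian is block-triangular, and the only nontrivial block is the $(s_1,c_2,c_3)$ loop, exactly as in Theorem~\ref{prop:BSS_1site}. I take $F$ to be the production $k_3c_3$ in $\dot s_1$. Then $\rho(FV^{-1})=k_3^*T_s/k_2$, which yields stability if and only if $T_s<k_2/k_3^*$.
\item \emph{Living state.} The vanishing species are $(s_0,c_1,c_4)$. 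I take $F$ to be the production of $c_4$ through $k_4^+ s_1 c_1$, and put all other terms into $V$. A short computation should give $\rho(FV^{-1})=k_4^* s_1^{\rm liv}/k_1$, so the threshold is $s_1^{\rm liv}=\alpha_1$. I then solve the living state with $s_1=\alpha_1$ and $s_2=\alpha_2$, which is the limit $u\to0$ of the parameterization \eqref{eq:parameters_cu}. This gives $T_s = \alpha_1+\alpha_2 + b_3T_e/(a_3+\gamma)$, i.e. $Q(0)=0$, which is exactly $T_s^{2,1}$. Monotonicity of $s_1^{\rm liv}$ in $T_s$ (the non-ACR coordinate of the single-site system increases with $T_s$) then converts $\rho\gtrless1$ into $T_s\gtrless T_s^{2,1}$.
\end{itemize}

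\textbf{Positive steady states.} Write $Q(u)=Au^2+Bu+C_0$ with $A=\alpha_0a_4>0$. Since $Q$ is quadratic, Theorem~\ref{thm:1-1} gives at most two positive steady states. The constant term is $C_0 = b_3T_e+(a_3+\gamma)(\alpha_1+\alpha_2-T_s)$, which is negative exactly when $T_s>T_s^{2,1}$. In that range the product of the roots is negative, so there is exactly one positive root by Descartes' rule.

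\textbf{Realizing two positive steady states.} At $T_s=T_s^{2,1}$ the roots are $0$ and $-B/A$. If
\[
B\big|_{T_s^{2,1}} = b_4T_e+(a_3+\gamma)\alpha_0-\frac{a_4b_3T_e}{a_3+\gamma}<0,
\]
then lowering $T_s$ slightly makes $C_0>0$ small while keeping $B<0$ and the discriminant positive. This produces two positive roots. I expect this realizability step to be the main obstacle. The issue is that $B<0$ requires $a_4b_3/(a_3+\gamma)>b_4+(a_3+\gamma)\alpha_0/T_e$. Since $b_3<2a_3$, this fails for large $\beta_4$. I would try $\beta_4$ small so that $a_4\approx b_4\approx1$, $\beta_3\gg\gamma$ (large $k_2/k_3$, small $k_4^*/k_2^*$), $\alpha_0$ small, and $T_e$ large. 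I would then check that these choices of $\gamma,\alpha_0,\beta_3,\beta_4$ are simultaneously attainable by rate constants, and cross-check against the parameters of Figure~\ref{fig:eec2c1_2pss}.
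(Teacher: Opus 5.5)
Your route matches the paper's. At the dead state you apply Theorem~\ref{thm:ngm} with $F$ the $k_3c_3$ inflow into $s_1$; the face meets $\mathcal C$ only in that point. For the living state you reduce the face $\{s_0=c_1=c_4=0\}$ to the relabeled single-site network via Theorem~\ref{thm:singlesite}. For parts 3 and 4 you use the sign of $C_0$ and the degree of $Q$.

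Two of your additions supply steps the paper does not write out. First, your classification through the chain equalities of Lemma~\ref{lem:SSP_2site} proves what the appendix only asserts: the dead state is unique and no other boundary states exist. You should add the one-line exclusion of $c_1>0=c_2$, which would force both $s_1=k_1/k_4^*$ and $s_1=0$. Second, you show that the living state with $s_1=\alpha_1$ has $T_s=T_s^{2,1}$, i.e.\ $Q(0)=0$. Combined with strict monotonicity of $s_1^{\mathrm{liv}}$ in $T_s$, this converts the appendix's criterion $s_1\lessgtr k_1/k_4^*$ into the theorem's $T_s\lessgtr T_s^{2,1}$; the paper stops at the former. The monotonicity holds because, on the face, $T_s=s_1+\alpha_2+b_3c_2$ with $c_2$ increasing in $s_1$. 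Neither your argument nor the paper's decides the critical values where $\rho(FV^{-1})=1$, so the ``only if'' at the two thresholds is not covered by either.

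There is one correction and one loose end.

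\textbf{Correction.} With your splitting at the living state, $F$ is the inflow $k_4^+s_1c_1$ into $c_4$. Then the spectral radius is not $k_4^*s_1/k_1$. In the order $(s_0,c_1,c_4)$, solving $Vx=e_3$ gives
\[
\rho(FV^{-1})=k_4^+s_1\,(V^{-1})_{23}=\frac{k_4^+s_1\,(k_4^-+2k_4)}{(k_1+k_4^+s_1)(k_4^-+k_4)}.
\]
The value $k_4^*s_1/k_1$ comes from the paper's splitting, where $F$ is the inflow $k_4c_4$ into $s_0$. Your expression is increasing in $s_1$ and equals $1$ exactly when $s_1=k_1/k_4^*$. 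So the threshold, and your conclusion, are unchanged.

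\textbf{Loose end.} To realize two positive steady states, the paper simply exhibits parameters. Your perturbation just below $T_s^{2,1}$ is valid, and it also explains why the two-root window sits just below the transcritical point (the backward bifurcation). It only needs finishing:
\begin{enumerate}
\item Since $T_e$ is free, it suffices to have $a_4b_3>b_4(a_3+\gamma)$ and then take $T_e>\alpha_0(a_3+\gamma)^2/\bigl(a_4b_3-b_4(a_3+\gamma)\bigr)$.
\item This is attainable: $\gamma$ can be shrunk through $k_4^+$ alone without changing $\beta_3$ or $\beta_4$, and as $\gamma\to0$ the inequality reduces to $\beta_3>\beta_4$.
\item The paper's parameters fit your scenario. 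They give $\gamma=1$, $\beta_3=3$, $\beta_4\approx0.25$, so $a_4b_3\approx8.75>7.50\approx b_4(a_3+\gamma)$. Also $T_s=17$ lies just below $T_s^{2,1}=17.12$, with $C_0=0.6>0$ and $B\approx-2.0<0$.
\end{enumerate}
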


\begin{proof}
The proofs of part 1 and 2 follow from the next-generation matrix method. The detailed calculations can be found in the Supplementary Material.

For part 3, we observe that when $T_s>T_s^{2,1}$, we have
\[
b_{3}T_{e}
+ (a_3+\gamma)(\alpha_{1} + \alpha_2-T_s)<0.
\]
Thus there is exactly one sign change in $Q(u)$, which implies $Q(u)$ has exactly one positive root. From Theorem \ref{thm:1-1}, there is exactly one positive steady state.

For part 4, since $Q(u)$ is a quadratic polynomial, there are at most two positive roots. Thus by Theorem \ref{thm:1-1}, there are at most two positive steady states. The necessary and sufficient condition for the existence of two positive steady states is
\begin{equation}\label{eq:iff_2pss_c1c2}
\text{Disc}(Q) >0 \quad \text{and} \quad b_{3}T_{e}
+ (a_3+\gamma)(\alpha_{1} + \alpha_2-T_s)>0,
\end{equation}
where $\text{Disc}(Q)$ is the discriminant of the polynomial $Q(u)$. A particular choice of positive rate constants and $T_s,T_e$ that satisfy \eqref{eq:iff_2pss_c1c2} is given as follows: $k_1^+ = 38.33$, $k_1^- = 2.04$, $k_1 = 2.04$, $k_2^+ = 6$, $k_2^- = 3$, $k_2 = 3$, $k_3^+ = 6$, $k_3^- = 1$, $k_3 = 1$, $k_4^+ = 4.08$, $k_4^- = 8.15$, $k_4 = 8.15$, $T_e = 10.8$, and $T_s=17$.

\end{proof}

\begin{remark}[Sufficient condition that preclude multistationarity]
We observe that the condition for $Q(u)$ to have two sign changes is 
\begin{equation}\label{eq:c1c2_twoPSS}
    \frac{b_4T_e + \alpha_0(a_3+ \gamma)}{a_4} < T_s - \alpha_1 - \alpha_2 < \frac{b_3T_e}{a_3 + \gamma}.
\end{equation}
There exists a choice of positive rate constants and total concentrations satisfying \eqref{eq:c1c2_twoPSS} if and only if 
\[
\frac{b_4T_e + \alpha_0(a_3+ \gamma)}{a_4}<\frac{b_3T_e}{a_3 + \gamma} \iff \alpha_0(a_3+\gamma) < T_e\frac{a_4b_3-b_4(a_3+\gamma)}{a_3+\gamma}.
\]
Thus, it is not possible for $Q(u)$ to have two positive roots (and thus there cannot be two positive steady states) if
\[
a_4b_3 < b_4(a_3+\gamma) \quad \text{or} \quad T_e<\frac{\alpha_0(a_3+\gamma)}{\frac{a_4b_3}{a_3+\gamma}-b_4}.
\]
\end{remark}

In Figures \ref{fig:eec2c1_1pss} and \ref{fig:eec2c1_2pss}, we present the bifurcation diagrams (as $T_s$ varies) of the $(E,E,C_2,C_1)$ network with two different choices of rate constants and $T_e$. 

Figure \ref{fig:eec2c1_1pss} illustrates the dynamics of the $(E,E,C_2,C_1)$ network when there is at most one positive steady state. In particular, there is a transcritical bifurcation at $T_s=k_2/k_3^*$ where the dead boundary steady state becomes unstable and the living boundary steady state emerges and is stable. There is another transcritical bifurcation at $T_s=T_s^{2,1}$ where the living boundary steady state becomes unstable and the positive steady state emerges and is stable.

Figure \ref{fig:eec2c1_2pss} illustrates the dynamics of the $(E,E,C_2,C_1)$ network when there are two positive steady states for some choices of $T_s$. There is still a transcritical bifurcation at $T_s=k_2/k_3^*$ where the living boundary steady state emerges. We omit this from Figure \ref{fig:eec2c1_2pss} since this behavior is observed in Figure \ref{fig:eec2c1_1pss} already. Next, there is a saddle-node bifurcation at $\text{Disc}(Q)=0$ where a stable and an unstable positive steady states emerge. Finally, there is a transcritical bifurcation at $T_s=T_s^{2,1}$ where the unstable positive steady state meets the living boundary steady state. This behavior is referred to as \textit{backward bifurcation} (for example, see \cite{gumel2012causes}) and can cause \textit{hysteresis} (for example, see \cite{strogatz2024nonlinear}) when parameters are varied.

\subsection{The $(E,E,C_1,C_1)$ network}

\begin{figure}[h!]
    \centering
\includegraphics[width=1\linewidth]{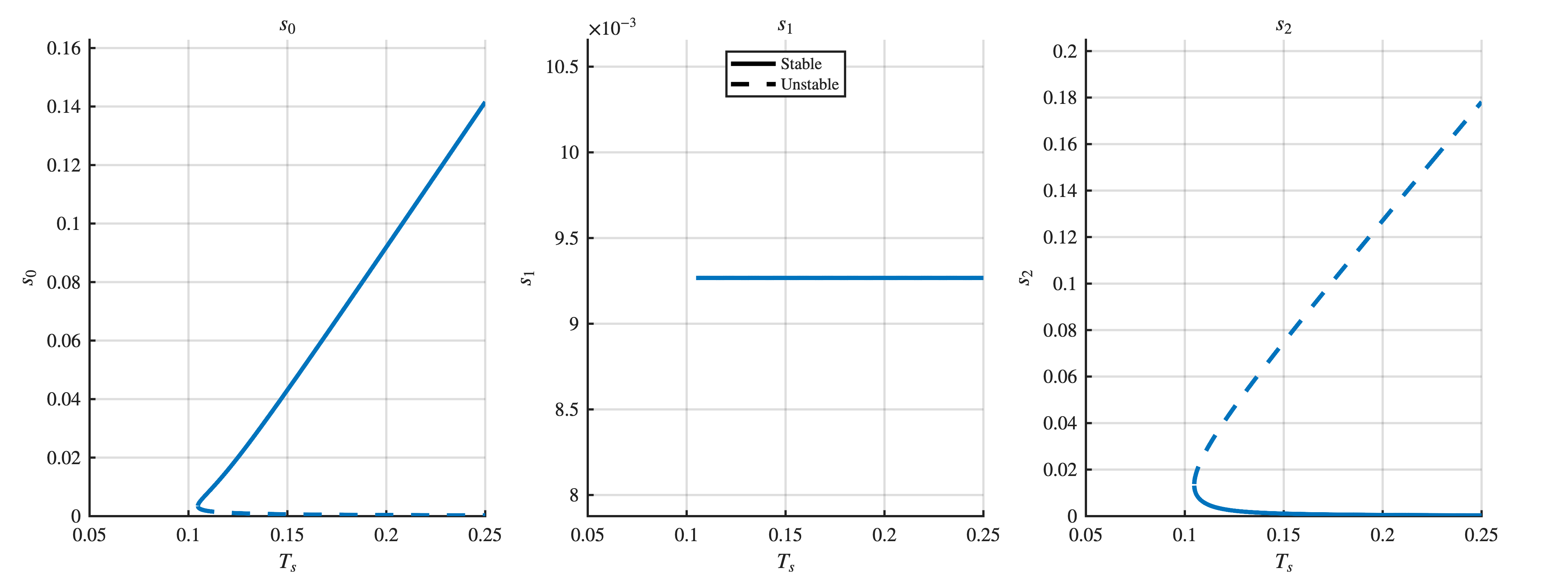}
    \caption{Bifurcation diagram for the $(E,E,C_1,C_1)$ network showing $s_0$, $s_1$, $s_2$ and multistationarity with ACR in species $S_1$. The dead boundary steady state, which is stable for all values of $T_s$, is omitted. A stable and an unstable positive steady state (blue) are born simultaneously in a saddle-node bifurcation. 
    ACR in $S_1$ is visible as a flat branch in the $s_1$ panel. Parameter values: $k_1^+ = 1370$, $k_1^- = 2.7$, $k_1 = 2.7$, $k_2^+ = 214$, $k_2^- = 1$, $k_2 = 1$, $k_3^+ = 168$, $k_3^- = 1$, $k_3 = 1$, $k_4^+ = 582.7$, $k_4^- = 1.36$, $k_4 = 1.36$, $T_e = 0.06$.}
    \label{fig:eec1c1}
\end{figure}

In this subsection, we study the dynamics of the $(E,E,C_1,C_1)$ network. Since the enzymes of the reverse chain are $\EE_2=\EE_1=C_1$, we have $v_1=1$ and $v_2=1/u$ (thus $w_2=1$). At positive steady states, equation \eqref{eq:mainpoly_u} becomes $P_{C_1,C_1}(u)=0$ where
\begin{equation*}
P_{C_1,C_1}(u)=u^{3}a_4\alpha_{0}
+ u^{2}\!\left(
b_{4}T_{e}
+ (a_3+\gamma) \alpha_{0}
- a_4(T_s - \alpha_{1})
\right)
+ u\!\left(
b_{3}T_{e}
+ a_4\alpha_{2}
- (a_3+\gamma) (T_{s}-\alpha_1)
\right)
+ (a_3+\gamma) \alpha_{2}.
\end{equation*}

\begin{theorem}\label{thm:EEC1C1}
    Consider the model $(E,E,C_1,C_1)$. 
    \begin{enumerate}
        \item (Boundary steady state) For any choice of positive rate constants and $T_e,T_s>0$, there is a unique boundary steady state with $e=T_e$ and $s_2=T_s$. This boundary steady state is locally stable for any choice of positive rate constants and $T_e,T_s>0$.
        \item (Positive steady state existence) For any choice of positive rate constants and any $T_e>0$, a positive steady state exists for any $T_s > T_s^{1,1}$, where
        \[
        T_s^{1,1} = \alpha_0 + \alpha_1 + \alpha_2 + \frac{T_e(b_4+b_3)}{a_4+a_3+\gamma}.
        \]
        Furthermore, the number of positive steady states (including multiplicity) is even.
        \item (Positive steady state capacity) For any choice of positive rate constants and $T_e, T_s>0$, there are at most two positive steady states. 
        
    \end{enumerate}
\end{theorem}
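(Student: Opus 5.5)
The plan is to treat the three parts separately. Parts 2 and 3 follow from sign information about the cubic $P_{C_1,C_1}(u)$, combined with Theorem~\ref{thm:1-1}. Part 1 follows from Lemma~\ref{lem:SSP_2site} together with the next-generation matrix method (Theorem~\ref{thm:ngm}).

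\textbf{Boundary steady state.} For $(E,E,C_1,C_1)$ we have $\EE_1=\EE_2=C_1$, so Lemma~\ref{lem:SSP_2site} reads
\[
k_1^*s_0e=k_1c_1=k_4^*s_1c_1=k_4c_4,\qquad k_2^*s_1e=k_2c_2=k_3^*s_2c_1=k_3c_3 .
\]
I would first rule out $e=0$. If $e=0$, then $c_1=c_2=0$, hence $c_3=c_4=0$, and $T_e=0$ by \eqref{eq:cons_nof}, a contradiction.

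Next, suppose $c_1>0$. Then $s_0>0$, and dividing by $c_1$ gives $s_1=k_1/k_4^*>0$. It follows that $c_2>0$, then $s_2>0$, and $c_3,c_4>0$, so the steady state is positive.

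Finally, suppose $c_1=0$. Then $s_0=c_4=0$ and $c_2=(k_3^*/k_2)s_2c_1=0$. This forces $s_1=0$ and $c_3=0$, and the conservation laws give $e=T_e$ and $s_2=T_s$. Hence this is the unique boundary steady state.

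For stability, the vanishing species are $(c_1,s_0,c_2,c_3,c_4,s_1)$. The boundary face meets $\mathcal C$ only at this point, so the subsystem hypothesis of Theorem~\ref{thm:ngm} holds trivially.

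The key observation concerns the linearization at $(e,s_2)=(T_e,T_s)$. There, $c_1$ is fed only by $k_1^+T_e s_0$, and $s_0$ is fed only by $k_1^-c_1$ and $k_4c_4$. Moreover, $c_4$ is produced only through the quadratic term $k_4^+s_1c_1$, whose derivative vanishes at the boundary.

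I would take $F$ to contain the ``new infection'' terms: $k_1^+T_es_0$ into $c_1$, $k_3^+T_sc_1$ into $c_3$, and $k_2^+T_es_1$ into $c_2$. All remaining flows go into $V$, which is then an invertible $Z$-matrix with a triangular transfer structure.

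Along the $c_1\to s_0\to c_1$ loop, the only return path is through unbinding $k_1^-$. Every other output of $c_1$, $c_2$ or $c_3$ either exits to $s_2$ (not vanishing) or feeds variables that do not return to $c_1$ linearly. I expect $\rho(FV^{-1})=k_1^-/(k_1^-+k_1)<1$, which gives stability for all parameters. Computing $FV^{-1}$ explicitly, or equivalently checking that the Jacobian in these coordinates is block triangular with a Hurwitz $(c_1,s_0)$ block and decaying downstream blocks, is the main bookkeeping obstacle.

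\textbf{Positive steady states.} The leading coefficient $a_4\alpha_0$ and the constant term $(a_3+\gamma)\alpha_2$ of $P_{C_1,C_1}$ are both positive. Hence $P(0)>0$ and $P(u)\to+\infty$ as $u\to\infty$, so the number of positive roots counted with multiplicity is even.

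Evaluating at $u=1$ and collecting terms, I expect
\[
P(1)=(a_4+a_3+\gamma)(\alpha_0+\alpha_1+\alpha_2)+(b_4+b_3)T_e-(a_4+a_3+\gamma)T_s ,
\]
which is negative exactly when $T_s>T_s^{1,1}$. By the intermediate value theorem, $P$ then has a root in $(0,1)$ and another in $(1,\infty)$, and Theorem~\ref{thm:1-1} gives positive steady states.

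For the capacity bound, the sign sequence of the coefficients is $(+,\ast,\ast,+)$. Since the first and last signs agree, it has at most two sign changes, and Descartes' rule bounds the number of positive roots by two. Combined with Theorem~\ref{thm:1-1}, this shows there are at most two positive steady states, and exactly two when $T_s>T_s^{1,1}$.
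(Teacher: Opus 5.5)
Your parts 2 and 3 follow the paper's argument: the signs of $P_{C_1,C_1}(0)$ and of the leading coefficient, the evaluation of $P_{C_1,C_1}(1)$, the intermediate value theorem, and Descartes' rule. Your IVT step slightly strengthens it, giving exactly two roots when $T_s>T_s^{1,1}$. Your case analysis showing that $(e,s_2)=(T_e,T_s)$ with all else zero is the \emph{only} boundary steady state is correct, and fills in a uniqueness step that the paper only asserts.

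The problem is the stability step of part 1: the linear structure you describe is wrong. Because $C_1$ is the enzyme of reactions 3 and 4, the linearized $c_1$-equation at the boundary point is
\[
\dot c_1 = k_1^+T_e s_0-(k_1^-+k_1+k_3^+T_s)c_1+(k_3^-+k_3)c_3+(k_4^-+k_4)c_4 ,
\]
so $c_1$ is not fed only by $s_0$. There is a linear loop $c_1\to c_3\to c_1$: binding at rate $k_3^+T_s$, then release of $C_1$ through both $k_3^-$ and $k_3$. With your choice of $F$ there is also a loop $c_2\to s_1\to c_2$ through $k_2^-$. Hence neither $\rho(FV^{-1})=k_1^-/(k_1^-+k_1)$ nor a block-triangular Jacobian with an $(s_0,c_1)$ diagonal block is correct. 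The irreducible block is $(s_0,c_1,c_3)$.

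The conclusion nevertheless survives. With your splitting, $V$ is upper triangular in the order $(s_0,s_1,c_1,c_2,c_3,c_4)$ with positive diagonal, hence a nonsingular M-matrix. Carrying out the computation gives
\[
\rho(FV^{-1})=\max\left\{\frac{k_1^-+k_3^+T_s}{k_1^-+k_1+k_3^+T_s},\ \frac{k_2^-}{k_2^-+k_2}\right\}<1 .
\]
Equivalently, the Jacobian on the vanishing species is block lower triangular in the order $(c_4),(s_0,c_1,c_3),(s_1,c_2)$:
\begin{itemize}
\item $c_4$ decays at rate $k_4^-+k_4$;
\item the $(s_0,c_1,c_3)$ block is an irreducible compartmental matrix with column sums $(0,-k_1,0)$;
\item the $(s_1,c_2)$ block has column sums $(0,-k_2)$.
\end{itemize}
All three blocks are Hurwitz.

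The paper's splitting avoids any threshold. It puts only $k_3c_3$ (into $s_1$) in $F$. Since $s_1$ affects $c_1,c_3,c_4$ only through the quadratic terms $k_4^+s_1c_1$, and $\{s_1,c_2\}$ is closed under the linear $V$-transfers, one has $(V^{-1})_{c_3,s_1}=0$. So $\rho(FV^{-1})=0$ for all parameters, which is a cleaner certificate than your nonzero (but subunit) value.
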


\begin{proof}
The proofs of part 1 follows from the next-generation matrix method. The detailed calculations can be found in the Supplementary Material. Part 3 follows from the fact that $P_{C_1,C_1}(u)$ has at most two sign changes.

For part 2, we observe that $P_{C_1,C_1}(0)=(a_3+\gamma)\alpha_2>0$ and $P_{C_1,C_1}(u)\sim u^3a_4\alpha_0 >0$ for sufficiently large $u$. Thus the number of positive roots of $P_{C_1,C_1}(u)$ (and the number of positive steady states) must be even. To show that a positive root exists for large enough $T_s$, we consider
\begin{align*}
P_{C_1,C_1}(1)&=a_4\alpha_{0}
+ 
b_{4}T_{e}
+ (a_3+\gamma) \alpha_{0}
- a_4(T_s - \alpha_{1})
+ 
b_{3}T_{e}
+ a_4\alpha_{2}
- (a_3+\gamma) (T_{s}-\alpha_1)
+ (a_3+\gamma) \alpha_{2}\\
&=(a_4+a_3+\gamma)(\alpha_0+\alpha_2)+T_e(b_4+b_3)-(a_4+a_3+\gamma)(T_s-\alpha_1).
\end{align*}
Thus if we let
\[
T_s^{1,1} = \alpha_0+\alpha_1+\alpha_2+\frac{T_e(b_4+b_3)}{a_4+a_3+\gamma}
\]
then $P_{C_1,C_1}(1)<0$ for any $T_s>T_s^{1,1}$. As a result, $P_{C_1,C_1}(u)$ has a positive root by the intermediate value theorem, and thus a positive steady state exists by Theorem \ref{thm:1-1}.
\end{proof}

Figure \ref{fig:eec1c1} illustrates the dynamics of the $(E,E,C_1,C_1)$ network for a specific choice of rate constants and total enzyme $T_e$. The boundary steady state, which is stable for any value of $T_s$, is omitted from the figure. We observe a saddle-node bifurcation at which a stable and an unstable positive steady state are born simultaneously, consistent with Theorem \ref{thm:EEC1C1}. As $T_s$ increases, the stable positive steady state shifts toward higher concentrations of $u$, $c_1$, $c_4$, and $s_0$, and lower concentrations of $c_2$, $c_3$, $e$, and $s_2$, reflecting the progressive shift of substrate toward the $S_0$ form. Notably, $s_2$ decreases as $T_s$ increases — a negative dose-response in the final product — while $s_1$ remains fixed at its ACR value $k_1/k_4^*$.

\begin{remark}\label{rem:eec1c1_asymptotics}
The $(E,E,C_1,C_1)$ network exhibits a particularly striking asymptotic behavior. From the parameterization \eqref{eq:parameters_cu} with $v_1=1$ and $v_2=1/u$, the positive steady state coordinates are $s_0 = \alpha_0 u$, $s_1 = \alpha_1$, and $s_2 = \alpha_2/u$. Therefore $s_0 \cdot s_2 = \alpha_0\alpha_2$, which is invariant across all positive steady states for all parameter values, a second substrate robustness property besides ACR in $s_1$. As $T_s \to \infty$, the parameter $u \to \infty$, so $s_0 \to \infty$ while $s_2 \to 0$. Thus the stable positive steady state asymptotically approaches a \emph{hypophosphorylated state} in which essentially all substrate is unphosphorylated ($S_0$), with a small fixed amount partially phosphorylated ($S_1 = \alpha_1$) and vanishingly little fully phosphorylated ($S_2 \to 0$). The stable dead boundary steady state, by contrast, has all substrate in the fully phosphorylated form $S_2$, which we call the \emph{fully phosphorylated state}.

The network is thus bistable between these two qualitatively extreme states for all $T_s$ above the saddle-node threshold. As $T_s$ increases, ACR forces $s_1$ to remain fixed while the stable positive steady state shifts ever further toward the hypophosphorylated extreme, and the basin of attraction of the fully phosphorylated state shrinks. The network therefore operates as a switch: initial conditions determine which state the system settles into, with the hypophosphorylated state becoming the increasingly dominant attractor as total substrate grows. In this sense the network acts as a phosphorylation resistor: the more substrate is added, the more strongly the system defaults to the hypophosphorylated state.
\end{remark}

\subsection{The $(E,E,C_1,C_2)$ network}

\begin{figure}[h!]
    \centering
\includegraphics[width=1\linewidth]{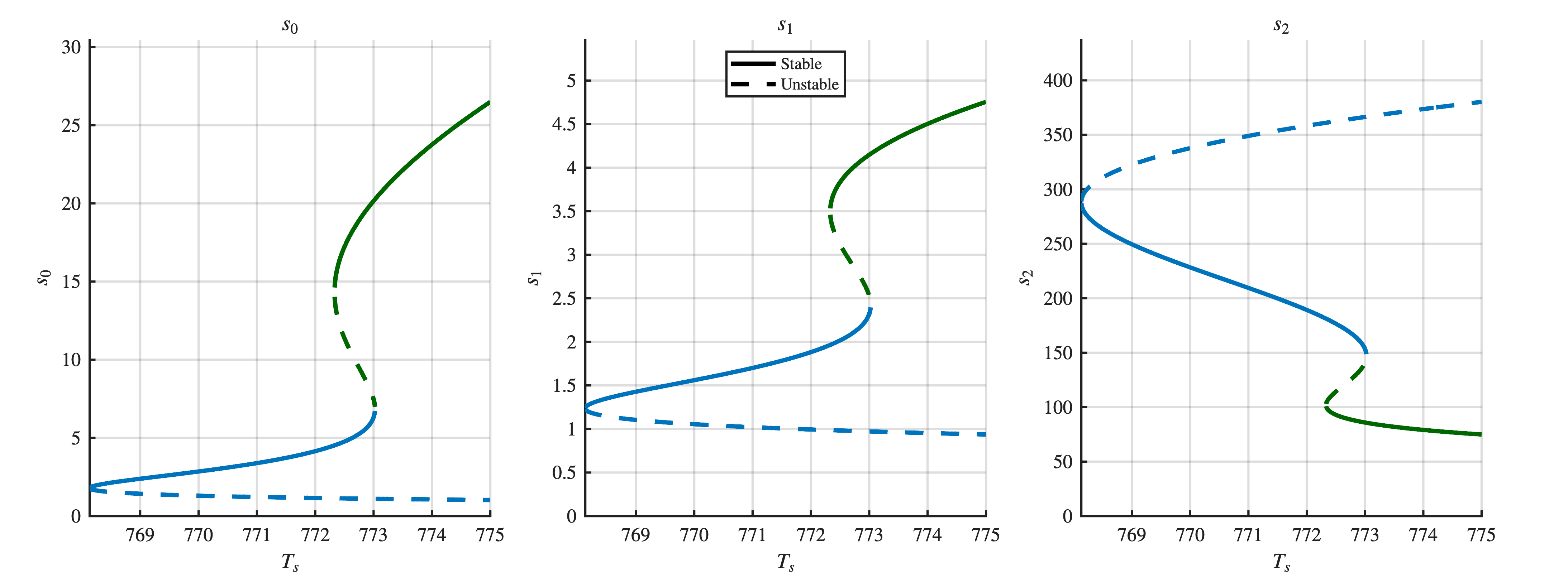}
    \caption{Bifurcation diagram for the $(E,E,C_1,C_2)$ network showing $s_0$, $s_1$, $s_2$ and multistability (two simultaneously stable positive steady states) out of a maximum of four. The dead boundary steady state, which is stable for all values of $T_s$, is omitted. A pair of positive steady states (one stable, one unstable) emerges at each of two saddle-node bifurcations, giving four positive steady states over an interval of $T_s$. The two distinct positive steady state branches are shown in blue and dark green to distinguish them; solid lines indicate stable steady states and dashed lines indicate unstable steady states. There is no ACR in any species. Parameter values: $k_1^+ = 1$, $k_1^- = 1$, $k_1 = 0.036$, $k_2^+ = 1$, $k_2^- = 11.13$, $k_2 = 1$, $k_3^+ = 1$, $k_3^- = 4886.7$, $k_3 = 1$, $k_4^+ = 1$, $k_4^- = 1$, $k_4 = 0.976$, $T_e = 661$.}
    \label{fig:eec1c2}
\end{figure}

In this subsection, we study the dynamics of the $(E,E,C_1,C_2)$ network. Since the enzymes of the reverse chain are $\EE_2=C_1$ and $\EE_1=C_2$, we have $v_1=u$ and $v_2=1/u$ (thus $w_2=1$). At positive steady states, equation \eqref{eq:mainpoly_u} becomes $P_{C_1,C_2}(u)=0$ where

\begin{equation*}
\begin{aligned}
P_{C_1,C_2}(u)=u^{5}a_4\alpha_{0}
+ u^{4}\!\left(a_3\alpha_{0} + a_4\alpha_{1}\right)
+ u^{3}\!\left(
b_{4}T_{e}
+ \gamma\alpha_{0}
+ a_3\alpha_{1}
- a_4T_{s}
\right) \\
+ u^{2}\!\left(
b_{3}T_{e}
+ \gamma\alpha_{1}
+ a_4\alpha_{2}
- a_3T_{s}
\right)
+ u\!\left(
a_3\alpha_{2}
-\gamma T_{s}
\right)
+ \gamma\alpha_{2}
.    
\end{aligned}
\end{equation*}

\begin{theorem}\label{thm:EEC1C2}
    Consider the model $(E,E,C_1,C_2)$. 
    \begin{enumerate}
        \item (Boundary steady state) For any choice of positive rate constants and $T_e,T_s>0$, there is a unique boundary steady state with $e=T_e$ and $s_2=T_s$. This boundary steady state is locally stable for any choice of positive rate constants and $T_e,T_s>0$.
        \item (Positive steady state existence) For any choice of positive rate constants and any $T_e>0$,  a positive steady state exists for any $T_s > T_s^{1,2}$, where
        \[
        T_s^{1,2} = \alpha_0+\alpha_1+\alpha_2+\frac{T_e(b_4+b_3)}{a_4+a_3+\gamma}.
        \]    
        Furthermore, the number of positive steady states (including multiplicity) is even. 
        \item (Positive steady state capacity) For any choice of positive rate constants and $T_e, T_s>0$, there are at most four positive steady states.  Furthermore, there exists a choice of positive rate constants and $T_e, T_s>0$ for which there are four positive steady states.
        \item (Multistability) There exists a choice of positive rate constants and $T_e, T_s >0$ for which there are two positive stable steady states. 
        
    \end{enumerate}
\end{theorem}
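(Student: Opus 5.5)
Part 1 goes through the next-generation matrix method (Theorem \ref{thm:ngm}), as in the previous two networks. Setting all species except $e$ and $s_2$ to zero gives a steady state: every reaction rate vanishes, since each reverse reaction needs $c_1$ or $c_2$ and the forward reactions need $s_0$ or $s_1$. The conservation laws \eqref{eq:cons_nof} then force $e=T_e$ and $s_2=T_s$.

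For uniqueness of boundary steady states, I would argue through Lemma \ref{lem:SSP_2site}.
\begin{itemize*}
\item If $e=0$, then $T_e>0$ forces some $c_\ell>0$. The chains of equalities propagate this to $c_1,c_2>0$, which in turn forces $e>0$, a contradiction.
\item So $e>0$. If $s_0>0$, the first chain makes every entry positive, so $c_1>0$ and $c_4>0$. Then $k_4^* s_1 c_2 = k_1 c_1$ gives $c_2,s_1>0$, and everything is positive.
\item Similarly, $s_1>0$ forces $c_2>0$, hence $c_3>0$, and then $s_2c_1>0$ gives $c_1>0$.
\item Hence $s_0=s_1=0$, so $c_1=c_2=c_3=c_4=0$.
\end{itemize*}
For stability, the vanishing species are $\tilde x=(s_0,s_1,c_1,c_2,c_3,c_4)$. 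The boundary face meets $\mathcal{C}$ only at $x^*$, so the face condition holds trivially. At $x^*$, the only linear production terms in $\tilde x$ are $k_1^+T_e s_0$ into $c_1$, $k_2^+T_e s_1$ into $c_2$, and $k_3^+ T_s c_1$ into $c_3$ (since $\EE_2=C_1$), together with the release terms $k_\ell c_\ell$ and $k_\ell^- c_\ell$. I would put all binding and release flows into $F$ and all losses into $V$, or use the stability-preserving alternative of placing the transitions in $V$, and then compute $\rho(FV^{-1})$.
\begin{itemize*}
\item The term $k_3^+T_s c_1$ is quadratic-free at $x^*$ and creates $c_3$.
\item The reaction $C_3\to S_1+C_1$ returns $c_1$, so there is a potential cycle.
\end{itemize*}
The key structural point is that $c_4$ needs $s_1c_2$ with both factors vanishing. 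The $(C_1,C_2)$ loop is therefore quadratic and does not enter the linearization. The only linear growth loop is $c_1\to c_3\to c_1$ (the reaction $C_3\to S_1+C_1$). Its gain is $\frac{k_3^+T_s}{k_3+k_3^-}\cdot\frac{k_3}{\cdots}$, weighed against the losses of $c_1$ through $k_1$, $k_1^-$ and the binding outflow. Actually $c_1$ is not lost by binding to $s_2$, since $C_1$ is the enzyme. So I expect the Jacobian block on $\tilde x$ to be triangular after ordering, with every diagonal entry negative.

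I would therefore skip the NGM ratio and verify directly that the linearization restricted to $\tilde x$ has eigenvalues with negative real part.
\begin{itemize*}
\item The $c_1$ equation gains $(k_3+k_3^-)c_3$ and loses $(k_1+k_1^-)c_1+k_3^+T_sc_1$.
\item The $c_3$ equation gains $k_3^+T_sc_1$ and loses $(k_3+k_3^-)c_3$.
\item The combination $c_1+c_3$ has net loss $(k_1+k_1^-)c_1$, so this $2\times 2$ block is Hurwitz.
\item The full linearization is then block triangular in the order $(s_0,s_1,c_2,c_4)$, $(c_1,c_3)$, using that $c_4$ has no linear production.
\end{itemize*}
This step is the main obstacle: one has to check carefully that no linear feedback appears. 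If the entries do not cooperate, the fallback is Theorem \ref{thm:ngm} with the $F$/$V$ splitting above, showing $\rho<1$ for all parameters.

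Part 2 copies the proof of Theorem \ref{thm:EEC1C1}.
\begin{itemize*}
\item $P_{C_1,C_2}(0)=\gamma\alpha_2>0$ and the leading coefficient $a_4\alpha_0$ is positive, so the number of positive roots counted with multiplicity is even.
\item Evaluating at $u=1$ gives the sum of coefficients: $(a_4+a_3+\gamma)(\alpha_0+\alpha_1+\alpha_2)+T_e(b_4+b_3)-(a_4+a_3+\gamma)T_s$.
\item This is negative exactly when $T_s>T_s^{1,2}$, and the intermediate value theorem together with Theorem \ref{thm:1-1} gives a positive steady state.
\end{itemize*}

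Part 3: the coefficient sequence has signs $+,+,?,?,?,+$. Its sign changes are at most four, since the pattern $+,+,-,+,-,+$ gives exactly four. Descartes' rule then bounds the number of positive roots by four. For attainment, I would use the parameters of Figure \ref{fig:eec1c2} and pick $T_s$ in the interval between the two saddle-node values, exhibiting four sign changes in $P_{C_1,C_2}$ on a grid.

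Part 4: with those same parameters, I would compute the four steady states from the parameterization \eqref{eq:parameters_cu}. I would then evaluate the Jacobian restricted to the stoichiometric subspace (dimension $7$) and check numerically that two of the steady states have all eigenvalues with negative real part.
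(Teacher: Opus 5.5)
\textbf{Part 1.} Here you depart from the paper. The paper applies Theorem~\ref{thm:ngm} with $F$ consisting only of the inflow $k_3c_3$ into $s_1$, and finds $\rho(FV^{-1})=0$. A direct linearization is a legitimate substitute. At $x^*$ the $\tilde x$-rows of the Jacobian vanish in the $e$ and $s_2$ columns, so the Jacobian on the stoichiometric subspace is conjugate to the $6\times 6$ block $J$ on $\tilde x$.

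However, the block structure you assert is false. The species $s_0$ and $c_1$ are coupled in both directions: $k_1^+T_e s_0$ feeds $c_1$ and $k_1^-c_1$ feeds $s_0$. So $J$ is not block triangular with blocks $(s_0,s_1,c_2,c_4)$ and $(c_1,c_3)$. Your ``net loss $(k_1+k_1^-)c_1$'' for $c_1+c_3$ is therefore not the relevant computation, since $k_1^-c_1$ returns to $s_0$ and rebinds. (Also, $C_1$ is sequestered when it binds $S_2$; it is only released again from $C_3$.)

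The correct strongly connected blocks are $\{c_4\}$, $\{s_0,c_1,c_3\}$ and $\{s_1,c_2\}$:
\begin{itemize}
\item $c_4$ feeds both other blocks.
\item $\{s_0,c_1,c_3\}$ feeds $\{s_1,c_2\}$ through $k_1c_1$ and $k_3c_3$.
\item Nothing feeds back, because $s_1$ and $c_2$ enter the equations for $s_0,c_1,c_3,c_4$ only through the quadratic term $k_4^+s_1c_2$.
\end{itemize}
The diagonal blocks are as follows:
\begin{itemize}
\item the scalar $-(k_4+k_4^-)$;
\item an irreducible Metzler block on $(s_0,c_1,c_3)$ with column sums $0,-k_1,0$, hence Hurwitz (the determinant of its negative is $k_1k_1^+T_e(k_3+k_3^-)>0$);
\item the $(s_1,c_2)$ block, with trace $-(k_2^+T_e+k_2+k_2^-)<0$ and determinant $k_2k_2^+T_e>0$.
\end{itemize}
With this repair your route works. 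It rests on the same fact the paper exploits: there is no linear path from $s_1$ back to $c_3$, which is exactly why $FV^{-1}$ is nilpotent in the paper's computation. Your uniqueness argument via Lemma~\ref{lem:SSP_2site} is correct and supplies a step the paper only asserts.

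\textbf{Parts 2--4} coincide with the paper's argument: parity from $P_{C_1,C_2}(0)>0$ and a positive leading coefficient, the intermediate value theorem at $u=1$, Descartes' rule with sign pattern $+,+,?,?,?,+$, and numerical certificates. Your constant term $\gamma\alpha_2$ and degree-five leading term are the correct ones. The paper's $(a_3+\gamma)\alpha_2$ and $u^3a_4\alpha_0$ are carried over from the $(E,E,C_1,C_1)$ case.

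Your choice of the Figure~\ref{fig:eec1c2} parameters is also the right one. With $T_s=772.5$ they give the sign pattern $+,+,-,+,-,+$, and $P_{C_1,C_2}$ alternates in sign at $u=12,14,30,50,100$. By contrast, the rate constants printed in the paper's proof (those of Figure~\ref{fig:eec2c2}) make the $u^2$ coefficient negative at that $T_s$.

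One slip in part 4: there are eight species and two conservation laws, so the stoichiometric subspace has dimension $6$, not $7$. Working in seven coordinates would introduce a spurious zero eigenvalue, and the stability check would fail.
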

\begin{proof}
The proofs of part 1 follows from the next-generation matrix method. The detailed calculations can be found in the Supplementary Material.

For part 2, we observe that $P_{C_1,C_2}(0)=(a_3+\gamma)\alpha_2>0$ and $P_{C_1,C_2}(u)\sim u^3a_4\alpha_0 >0$ for sufficiently large $u$. Thus the number of positive roots of $P_{C_1,C_2}(u)$ (and the number of positive steady states) must be even. To show that a positive root exists for large enough $T_s$, we calculate
\begin{align*}
P_{C_1,C_2}(1)=(a_4+a_3+\gamma)(\alpha_0+\alpha_1+\alpha_2)+T_e(b_4+b_3)-(a_4+a_3+\gamma)T_s=(a_4+a_3+\gamma)(T_s^{1,2}-T_s).
\end{align*}
Thus $P_{C_1,C_2}(1)<0$ for any $T_s>T_s^{1,2}$. As a result, $P_{C_1,C_2}(u)$ has a positive root by the intermediate value theorem, and thus a positive steady state exists by Theorem \ref{thm:1-1}.

For part 3 and 4, we first note that $P_{C_1,C_2}(u)$ has at most four sign changes, thus it has at most four positive roots. We are able to find a choice of positive rate constants and total concentrations such that $P_{C_1,C_2}(u)$ has exactly four positive roots, thus the system has four positive steady states. The choice of rate constants and total concentrations is given as follows: $k_1^+ = 645$, $k_1^- = 2.2$, $k_1 = 2.2$, $k_2^+ = 76.8$, $k_2^- = 4$, $k_2 = 4$, $k_3^+ = 8$, $k_3^- = 1$, $k_3 = 1$, $k_4^+ = 51$, $k_4^- = 10.87$, $k_4 = 10.87$, $T_e = 12.45$, and $T_s=772.5$ (see Figure \ref{fig:eec1c2}). Furthermore, from direct calculations, we can show that there are two stable positive steady states with this choice of rate constants and total concentrations.

\end{proof}

\begin{remark}
Although the $(E,E,C_1,C_2)$ network does not have ACR in any species (Theorem~\ref{thm:ACR}), the product $s_1 s_2$ is nonetheless robust across positive steady states: from the parameterization \eqref{eq:parameters_cu}, at every positive steady state we have $s_1 s_2 = \alpha_1\alpha_2 v_1 v_2 = \alpha_1\alpha_2$, a quantity depending only on the rate constants. The parameterization also gives $s_1 = \alpha_1 u$ and $s_2 = \alpha_2/u$, so $s_1$ and $s_2$ move in opposite directions as $u$ varies. For the specific parameter values shown in Figure~\ref{fig:eec1c2}, the stable positive steady state branches have increasing $u$ as $T_s$ increases, giving $s_1$ increasing and $s_2$ decreasing with $T_s$ — a negative dose-response in the final output. Whether this holds for all parameter values depends on which branch is stable and how $u$ varies along it, and is not established in general.
\end{remark}

\begin{remark}[Necessary condition for four positive steady states]
We observe that the maximum number of sign changes of $P_{C_1,C_2}(u)$ is achieved when the coefficients of $u^3$ and $u$ are negative while that of $u^2$ is positive. In particular, the condition for this is given by: 
\begin{equation*}
        \max \left \{ \frac{a_3\alpha_2}{\gamma}, \frac{1}{a_4}(b_4T_e + \gamma\alpha_0+a_3\alpha_1) \right\} < T_s 
        < 
        \frac{1}{a_3}(b_3T_e + \gamma\alpha_1+a_4\alpha_2),
 \end{equation*}       
 or equivalently
 \begin{align*}
        &\max \left \{ \frac{k_1 k_2^*}{k_3^* k_4^*}\left(1+\frac{k_2}{k_3}\right),
        \left[k_1\left(\frac{1}{k_1^*} + \frac{1}{k_4^*}\left(1+\frac{k_2}{k_3}\right)\right) + \left(1 + \frac{2k_1}{k_4}\right) T_e\right]\left(1+ \frac{k_1}{k_4}\right)^{-1}
        \right\}
        \\
        &\quad \quad < T_s <
        \left[k_2\left(\frac{1}{k_2^*} + \frac{1}{k_3^*}\left(1+\frac{k_1}{k_4}\right)\right) + \left(1 + \frac{2k_2}{k_3}\right) T_e\right]\left(1+ \frac{k_2}{k_3}\right)^{-1}.
\end{align*}    
\end{remark}

Figure \ref{fig:eec1c2} illustrates the dynamics of the $(E,E,C_1,C_2)$ network for a specific choice of rate constants and total enzyme $T_e$. The  boundary steady state, which is stable for any value of $T_s$, is omitted from the figure. We observe a small interval of $T_s$ where the mass-action system admits four positive steady states, two of which are stable. As a result, the system exhibits bistability.

\subsection{The $(E,E,C_2,C_2)$ network}

\begin{figure}[h!]
    \centering
\includegraphics[width=1\linewidth]{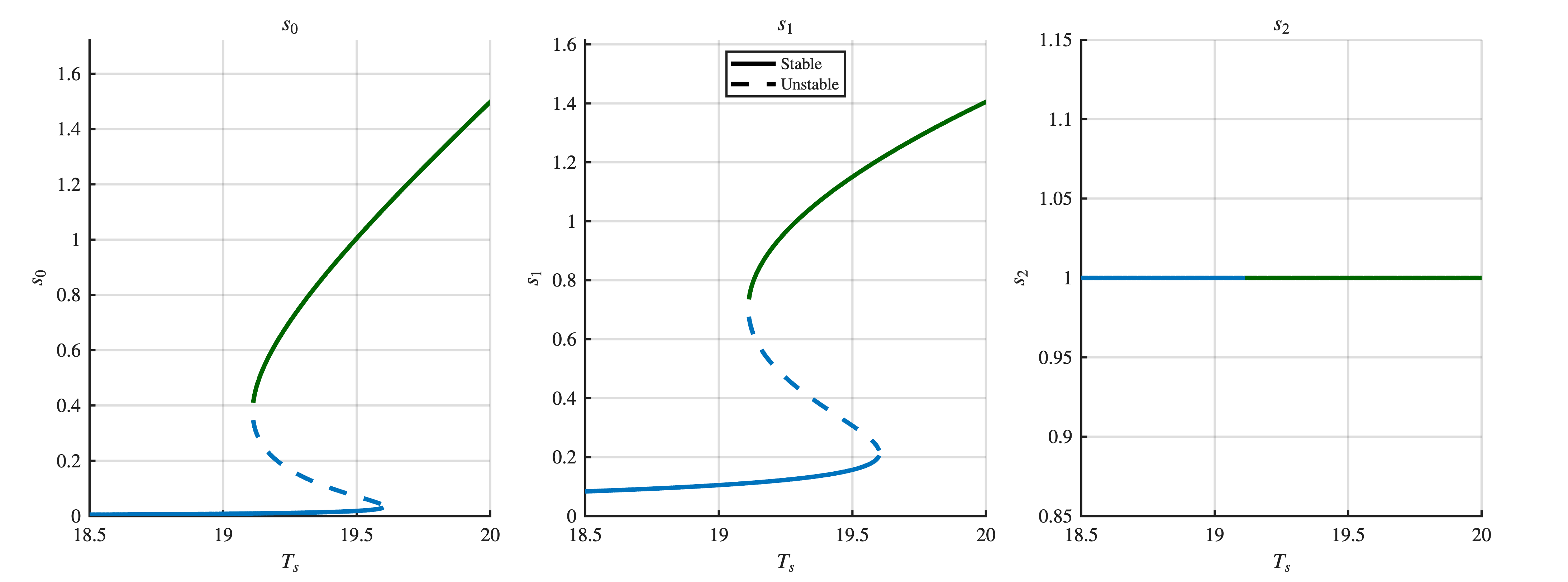}
    \caption{Bifurcation diagram for the $(E,E,C_2,C_2)$ network showing $s_0$, $s_1$, $s_2$ and multistability (two simultaneously stable positive steady states) together with ACR in species $S_2$. The dead boundary steady state branch is omitted; it is stable for $T_s < k_2/k_3^*$ and loses stability at the transcritical bifurcation where the first positive steady state emerges. A stable and an unstable positive steady state are subsequently born at a saddle-node bifurcation, giving three positive steady states over an interval of $T_s$. The two distinct positive steady state branches are shown in blue and dark green to distinguish them; solid lines indicate stable steady states and dashed lines indicate unstable steady states. ACR in $S_2$ is visible as a flat branch in the $s_2$ panel. Parameter values: $k_1^+ = 645$, $k_1^- = 2.2$, $k_1 = 2.2$, $k_2^+ = 76.8$, $k_2^- = 4$, $k_2 = 4$, $k_3^+ = 8$, $k_3^- = 1$, $k_3 = 1$, $k_4^+ = 51$, $k_4^- = 10.87$, $k_4 = 10.87$, $T_e = 12.45$.}
    \label{fig:eec2c2}
\end{figure}

In this subsection, we study the dynamics of the $(E,E,C_2,C_2)$ network. Since the enzymes of the reverse chain are $\EE_2=\EE_1=C_2$, we have $v_1=u$ and $v_2=1$ (thus $w_2=u$). At positive steady states, equation \eqref{eq:mainpoly_u} becomes $R(u)=0$ where
\begin{align*}
R(u):=\frac{P_{C_2,C_2}(u)}{u}=u^{4}a_4\alpha_{0}
+& u^{3}\!\left(a_3\alpha_{0} + a_4\alpha_{1}\right)
+ u^{2}\!\left(
b_{4}T_{e}
+ \gamma\alpha_{0}
+ a_3\alpha_{1}
+ a_4(\alpha_{2}
- T_{s})
\right)\\
&+ u\!\left(
b_{3}T_{e}
+ \gamma\alpha_{1}
+ a_3(\alpha_{2}
- T_{s})
\right)
+  \gamma\left(
\alpha_{2}
-T_{s}
\right).
\end{align*}

\begin{theorem}\label{thm:EEC2C2}
    Consider the model $(E,E,C_2,C_2)$. 
    \begin{enumerate}
        \item (Boundary steady state) For any choice of positive rate constants and $T_e,T_s>0$, there is a unique boundary steady state with $e=T_e$ and $s_2=T_s$. This steady state is locally stable if $T_s<k_2/k_3^*$ and unstable if $T_s>k_2/k_3^*$.
        \item (Positive steady state existence) \cite{joshi2024bifunctional} For any choice of positive rate constants and any $T_e>0$, a positive steady state exists if and only if $T_s>k_2/k_3^*$. Furthermore, the number of positive steady states (including multiplicity) is odd. 
        \item (Positive steady state capacity) For any choice of positive rate constants and $T_e, T_s>0$, there are at most three positive steady states.  Furthermore, there exists a choice of positive rate constants and $T_e, T_s>0$ for which there are three positive steady states. 
        \item (Multistability) There exists a choice of positive rate constants and $T_e, T_s >0$ for which there are two positive stable steady states. 
    \end{enumerate}
\end{theorem}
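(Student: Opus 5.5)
The proof follows the template used for Theorems \ref{thm:EEC2C1}--\ref{thm:EEC1C2}, working with the quartic $R(u)$.

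\textbf{Part 1.} At a boundary steady state with $s_2=T_s$ and $e=T_e$, all other coordinates vanish. This is a dead steady state, and it is unique because every positive steady state has $e>0$ and all $c_\ell>0$. We apply Theorem~\ref{thm:ngm} with vanishing species $\tilde x=(c_1,c_2,c_3,c_4,s_0,s_1)$. The face of $\mathcal C$ where these species vanish is the single point $x^*$, so the required stability on the face holds trivially. Linearizing at $x^*$, we place in $\mathcal F$ the only production term fed by $s_2$, namely $k_3^+ s_2 c_2$ entering $c_3$ (since $\EE_2=C_2$). The remaining linear terms are the decay of each $c_\ell$ at rate $k_\ell^-+k_\ell$, the decay of $s_0,s_1$ through binding to $e=T_e$, and the conversions $c_3\to s_1+c_2$ and $c_2\to s_2+E$. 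These are split into $\mathcal F$ (new infections) and $\mathcal V$ (transitions), choosing the split so that $V$ is an invertible $Z$-matrix and $F\ge0$. $F$ then has essentially one nonzero column chain, $c_2\mapsto c_3$ with weight $k_3^+T_s$, so $\rho(FV^{-1})$ reduces to the product of the transition probabilities around the loop $c_2\to c_3\to c_2$. This gives
\[
\rho(FV^{-1})=\frac{k_3^+T_s}{k_3+k_3^-}\cdot\frac{k_3}{k_2}\cdot(\text{routing factor})=\frac{k_3^*T_s}{k_2}.
\]
Here the routing factor is the probability that $c_2$, once formed from $s_1$, returns to $c_3$. Hence the threshold is $T_s=k_2/k_3^*$. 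I expect this to be the main obstacle: choosing the $\mathcal F/\mathcal V$ split so that the spectral radius comes out exactly $k_3^*T_s/k_2$. The $s_1$-recycling, where $c_3$ produces $s_1$ and $s_1$ rebinds $E$ to form $c_2$ or is pushed to $s_0$ via $c_4$, must be absorbed into $V^{-1}$ correctly. I would verify this by computing the single nonzero eigenvalue of the rank-one $FV^{-1}$ explicitly.

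\textbf{Part 2.} Since $R(0)=\gamma(\alpha_2-T_s)$ and the leading coefficient $a_4\alpha_0>0$, the intermediate value theorem gives a positive root whenever $T_s>\alpha_2=k_2/k_3^*$. The number of positive roots counted with multiplicity is then odd, because $R(0)<0<R(\infty)$. Conversely, by ACR (Theorem~\ref{thm:ACR}) every positive steady state has $s_2=\alpha_2$. The conservation law gives $T_s> s_2=\alpha_2$, and if $T_s\le\alpha_2$ the parity argument yields no positive root, since $R(0)\ge0$. By Theorem~\ref{thm:1-1} the positive roots correspond to positive steady states, which proves the equivalence.

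\textbf{Parts 3 and 4.} When $T_s>\alpha_2$, the constant coefficient of $R$ is negative and the $u^4,u^3$ coefficients are positive. The coefficient sequence therefore has at most three sign changes, and the maximum occurs with pattern $(+,+,-,+,-)$. When $T_s\le\alpha_2$ there are no positive roots. By Descartes' rule and Theorem~\ref{thm:1-1}, there are at most three positive steady states. For existence of three, I would take the parameters of Figure~\ref{fig:eec2c2} with a $T_s$ inside the saddle-node window. I would check that the $u^2$ coefficient is negative and the $u$ coefficient is positive, and then exhibit three positive roots numerically, for instance by sign changes of $R$ at explicit rational points. For multistability, I would compute the $9\times9$ Jacobian at each of the three steady states, restrict it to the stoichiometric subspace (dimension $7$), and verify numerically, via Routh--Hurwitz or the eigenvalues, that two of the steady states have all eigenvalues with negative real part.
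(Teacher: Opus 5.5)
Parts 2--4 follow the paper's route: the sign pattern of $R$, parity from $R(0)<0<R(\infty)$, Descartes' rule, a numerical witness with the rate constants of Figure~\ref{fig:eec2c2}, and numerical Jacobian eigenvalues. Your converse in Part 2, via $T_s>s_2=\alpha_2$ at any positive steady state, is a valid alternative to the paper's observation that every coefficient of $R$ is nonnegative when $T_s\le\alpha_2$.

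The real gap is in Part 1. The value you assert for $\rho(FV^{-1})$ is not what your split produces. The ``routing factor'' is also incoherent: your equality needs it to be $1$, while the probability you describe is $k_3^+T_s/(k_2+k_3^+T_s)$. With $k_3^+s_2c_2$ as the only term of $\mathcal F$, $F$ has the single entry $F_{c_3,c_2}=k_3^+T_s$. So the nonzero eigenvalue of $FV^{-1}$ is $k_3^+T_s\,(V^{-1})_{c_2,c_3}$. Solve $Vx=\delta_{c_3}$, where $\delta_{c_3}$ is the unit vector in the $c_3$-coordinate. This gives $x_{c_4}=0$, $x_{c_3}=1/(k_3+k_3^-)$ and $x_{s_0}=x_{c_1}=0$. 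Adding the $s_1$ and $c_2$ rows gives $(k_2+k_3^+T_s)x_{c_2}=(k_3^-+2k_3)x_{c_3}$, so
\[
\rho(FV^{-1})=\frac{k_3^+T_s\,(k_3^-+2k_3)}{(k_3+k_3^-)(k_2+k_3^+T_s)}.
\]
The interpretation: each $c_3$ returns its enzyme $c_2$ and, with probability $k_3/(k_3+k_3^-)$, an $s_1$ that binds $E$ to form a second $c_2$. Each $c_2$ then forms a new $c_3$ with probability $k_3^+T_s/(k_2+k_3^+T_s)$.

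This is not $k_3^*T_s/k_2$, but the threshold is the same: $\rho(FV^{-1})<1\iff k_3k_3^+T_s<k_2(k_3+k_3^-)\iff T_s<k_2/k_3^*$. Your $V$ is block triangular in the order $(c_4,c_3,\{s_0,c_1\},\{s_1,c_2\})$, with nonsingular $M$-matrix diagonal blocks. So Theorem~\ref{thm:ngm} still gives exactly the claimed threshold once you actually carry out the computation. There is no need to force the value $k_3^*T_s/k_2$.

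Your displayed product $\frac{k_3^+T_s}{k_3+k_3^-}\cdot\frac{k_3}{k_2}$ is in fact the value for the paper's split. That split instead puts the catalytic release $k_3c_3$ in the $s_1$ equation into $\mathcal F$. Then $\rho(FV^{-1})=k_3\,(V^{-1})_{c_3,s_1}=k_3\cdot\frac{k_3^+T_s}{k_2(k_3+k_3^-)}=k_3^*T_s/k_2$.

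There are three smaller repairs.

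\begin{enumerate}
\item Your uniqueness argument for the boundary steady state is a non sequitur: positivity of $e$ and the $c_\ell$ at positive steady states says nothing about other boundary steady states. Instead, use Lemma~\ref{lem:SSP_2site} with $\EE_1=\EE_2=C_2$, which gives $k_2c_2=k_2^*s_1e=k_3^*s_2c_2=k_3c_3$ and $k_1^*s_0e=k_1c_1=k_4^*s_1c_2=k_4c_4$. Then $c_2=0$ forces $c_1=c_3=c_4=0$, $e=T_e$, $s_0=s_1=0$ and $s_2=T_s$. Conversely, $c_2>0$ forces every coordinate to be positive.
\item In Part 2, the clause that for $T_s\le\alpha_2$ ``the parity argument yields no positive root, since $R(0)\ge 0$'' is wrong as stated, since parity only gives an even count. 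Drop it; your ACR/conservation argument already covers this case.
\item The system has $8$ species, so the Jacobian is $8\times 8$ and the stoichiometric subspace has dimension $6$, not $9$ and $7$.
\end{enumerate}
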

    
\begin{proof}
The proof of part 1 follows from the next-generation matrix method; the detailed calculations are in Appendix~the Supplementary Material.

For part 2, we observe that when $T_s\leq k_2/k_3^* = \alpha_2$, $R(u)$ has no sign change.
When $T_s>k_2/k_3^*$, we have $R(0)=\gamma(\alpha_2-T_s)<0$ and $R(u)\sim u^4a_4\alpha_0>0$ for sufficiently large $u$. Therefore, the number of zeros of $R(u)$ (including multiplicity) is odd.
In particular, $R(u)$ has at least one zero.
The statement of part 2 now follows by Theorem \ref{thm:1-1}.

For part 3, since $R(u)$ has at most three sign changes, there are at most three positive steady states. We are able to find a choice of positive rate constants and total concentrations such that $R(u)$ has exactly three positive roots, which implies the system has exactly three positive steady states. The choice of rate constants and total concentrations is given as follows: $k_1^+ = 645$, $k_1^- = 2.2$, $k_1 = 2.2$, $k_2^+ = 76.8$, $k_2^- = 4$, $k_2 = 4$, $k_3^+ = 8$, $k_3^- = 1$, $k_3 = 1$, $k_4^+ = 51$, $k_4^- = 10.87$, $k_4 = 10.87$, $T_e = 12.45$, and $T_s=19.5$.

For part 4, by computing the eigenvalues of the Jacobian matrix at the steady states found for the parameters in the previous paragraph, we showed that there are two stable positive steady states.

\end{proof}

Figure \ref{fig:eec2c2} illustrates the dynamics of the $(E,E,C_2,C_2)$ network for a specific choice of rate constants and total enzyme $T_e$. As suggested by Theorem \ref{thm:EEC2C2}, there is a transcritical bifurcation at $T_s=k_2/k_3^*$ where the boundary steady state becomes unstable and a positive steady state emerges and is stable. Since this behavior is similar to that in Figure \ref{fig:eec2c1_1pss}, we omit it from Figure \ref{fig:eec2c2} and focus only on the new dynamical behaviors that occur at larger values of $T_s$. In particular, there is an interval of $T_s$ in which the system admits three positive steady states, two of which are stable, exhibiting bistability. The $(E,E,C_2,C_2)$ network is therefore the only one among the four that simultaneously exhibits bistability and ACR.

\section{Discussion}
\label{sec:discussion}

\begin{table}[t]
\centering
\begin{tabular}{llllllll}
\toprule
\multirow{2}{*}{Model} & \multicolumn{2}{c}{Dead boundary} & \multicolumn{2}{c}{Living boundary} & \multicolumn{3}{c}{Positive steady state} \\ \cmidrule(lr){2-3} \cmidrule(lr){4-5} \cmidrule(lr){6-8} 
 & Existence & Stability & Existence & Stability & Existence & Max & ACR \\ 
\midrule
$(E,E,F,F)$ & None & N/A & None & N/A & All & 3 & None \\
\midrule
$(E,E,C_2,C_1)$ & All & $T_s < \frac{k_2}{k_3^*}$ & $T_s > \frac{k_2}{k_3^*}$ & $T_s<T_s^{2,1}$ & $T_s > T_s^{2,1}$ & $2$ & $s_{1} = \frac{k_1}{k_4^*},\; s_{2} = \frac{k_2}{k_3^*}$  \\
\addlinespace
$(E,E,C_1,C_1)$ & All & All & None & N/A & $T_s > T_s^{1,1}$ & $2$ & $s_1= \frac{k_1}{k_4^*}$  \\
\addlinespace
$(E,E,C_1,C_2)$ & All & All  & None & N/A & $T_s>T_s^{1,2}$ & $4$ & None \\
\addlinespace
$(E,E,C_2,C_2)$ & All & $T_s < \frac{k_2}{k_3^*}$ & None & N/A & $T_s > \frac{k_2}{k_3^*}$ & $3$ & $s_{2} = \frac{k_2}{k_3^*}$ \\
\bottomrule
\end{tabular}
\caption{Summary of Theorems \ref{thm:EEC2C1}, \ref{thm:EEC1C1}, \ref{thm:EEC1C2}, and \ref{thm:EEC2C2} for the dual-site futile cycles with bifunctional enzymes, with the non-bifunctional network $(E,E,F,F)$ included for comparison. For boundary steady states, we show conditions on $T_s$ under which the steady state exists and is stable (for any choice of positive rate constants and $T_e > 0$). For positive steady states, we show conditions for existence and the maximum number possible in any stoichiometric compatibility class. In the Existence and Stability columns, ``All'' means for all positive rate constants and all $T_e, T_s > 0$, and ``None'' means for no choice of positive rate constants and $T_e, T_s > 0$. The ACR column shows the steady-state value of the ACR species, if any. The thresholds $T_s^{1,1}$, $T_s^{1,2}$, $T_s^{2,1}$ are explicit functions of the rate constants and $T_e$; see the respective theorems for their definitions.}
\label{table:summary}
\end{table}

Table~\ref{table:summary} summarizes the dynamical properties of the five networks studied in this paper. In this section we discuss the broader significance of these results, comparing the four bifunctional enzyme networks to each other and to the non-bifunctional case, and drawing connections to biological function.

\paragraph{ACR from the substrate hypergraph.}
A key contribution of \cite{joshi2026bifunctional} is the identification of the \emph{substrate hypergraph} as the underlying structural cause of ACR in enzymatic networks. The substrate hypergraph tracks only which substrate species are transformed in each reaction, abstracting away all mechanistic details. The central result is that ACR is a property of the hypergraph alone: if the appropriate structural condition is satisfied, then ACR holds for \emph{any} detailed model associated with that hypergraph, regardless of which specific mechanism is assumed. In particular, Theorems~\ref{thm:ACR_1site} and~\ref{thm:ACR} hold for any valid detailed model of the respective substrate hypergraphs, not only for the Henri-Michaelis-Menten mechanism studied here.

\paragraph{Boundary steady states: a hallmark of bifunctional enzymes.}
A clear distinction between the bifunctional enzyme networks and the conventional two-enzyme case $(E,E,F,F)$ is the presence of boundary steady states. In the $(E,E,F,F)$ network, a positive steady state exists for all parameter values, and there are no boundary steady states. By contrast, in all four bifunctional enzyme networks a dead boundary steady state always exists, corresponding to all substrate locked in the fully phosphorylated form $S_2$ with the enzyme $E$ free.

Beyond the dead boundary steady state, in the $(E,E,C_2,C_1)$ network there is additionally a \emph{living} boundary steady state in which the subsystem $S_1 \rightleftarrows S_2$ is active but the reactions involving $S_0$ are switched off. This living boundary steady state exists when $T_s > k_2/k_3^*$ and is stable for intermediate values of $T_s$.

The stability of the dead boundary steady state also varies considerably across networks. In $(E,E,C_2,C_1)$ and $(E,E,C_2,C_2)$, the dead boundary steady state is stable only when $T_s < k_2/k_3^*$ and becomes unstable above this threshold, giving rise to a transcritical bifurcation. In $(E,E,C_1,C_1)$ and $(E,E,C_1,C_2)$, the dead boundary steady state is stable for all parameter values, and positive steady states emerge through a saddle-node bifurcation rather than a transcritical one.

\paragraph{Positive steady states: capacity and bifurcation structure.}
The four bifunctional networks exhibit a wide range of behaviors in their positive steady states. The maximum number of positive steady states ranges from two (in $(E,E,C_2,C_1)$ and $(E,E,C_1,C_1)$) to three (in $(E,E,C_2,C_2)$) to four (in $(E,E,C_1,C_2)$), compared to a maximum of three for the non-bifunctional $(E,E,F,F)$ network. The bifurcation structure connecting boundary and positive steady states also varies. In $(E,E,C_2,C_1)$ and $(E,E,C_2,C_2)$, the positive steady state branch is connected to the dead boundary steady state through a transcritical bifurcation. 
In $(E,E,C_1,C_1)$ and $(E,E,C_1,C_2)$, no such transcritical bifurcation occurs; instead, a pair of positive steady states (one stable, one unstable) is born simultaneously in a saddle-node bifurcation at a threshold $T_s$ that depends on the rate constants and $T_e$.

In the $(E,E,C_2,C_1)$ network, the connection between boundary and positive steady state branches is particularly rich. As $T_s$ increases, the living boundary steady state first gains stability through a transcritical bifurcation with the dead boundary steady state, then loses it through a second transcritical bifurcation with the positive steady state branch, or for certain parameter choices through a saddle-node bifurcation that creates the positive steady state below the transcritical threshold. This latter scenario constitutes a \emph{backward bifurcation} \cite{gumel2012causes}: the positive steady state appears before the living boundary steady state becomes unstable, creating an overlap region where both are stable simultaneously. Backward bifurcations are associated with \emph{hysteresis} \cite{strogatz2024nonlinear}: as $T_s$ is increased and then decreased, the system may follow different branches, settling into the positive steady state on the way up but remaining at the living boundary steady state on the way down. This history-dependence can have important functional consequences in signaling contexts.

\paragraph{Simultaneous robustness and flexibility.}
The four bifunctional enzyme networks display a progression of increasingly rich combinations of multistationarity, multistability, and ACR, culminating in a striking coexistence of bistability and output robustness that is unique to the bifunctional architecture.

The $(E,E,C_2,C_1)$ and $(E,E,C_1,C_1)$ networks each admit multiple positive steady states within the same stoichiometric compatibility class (i.e., for the same initial total substrate and enzyme), and each has ACR in at least one species. However, neither network exhibits bistability between positive steady states.

The $(E,E,C_1,C_2)$ network achieves bistability. For certain parameter values, two positive steady states are simultaneously stable within the same compatibility class but the network has no ACR in any individual species. The stable steady states differ in all of $s_0$, $s_1$, and $s_2$, so the network can switch between two quantitatively different internal states, but neither state is distinguished by a robust output.

The $(E,E,C_2,C_2)$ network achieves both simultaneously. It is bistable, admitting two simultaneously stable positive steady states within the same compatibility class, and it has ACR in $S_2$. This means that the two stable steady states share the same value of $s_2$, yet differ in $s_0$ and $s_1$. The network thus supports a switch-like response in the intermediate modification states $S_0$ and $S_1$, which may vary substantially between the two stable states, while maintaining a completely invariant output at the final modification state $S_2$. This combination is likely not achievable in non-bifunctional networks, which lack ACR, and represents perhaps the most interesting consequence of the bifunctional enzyme architecture studied in this paper as it suggests two distinct signaling roles could be accommodated by a single mechanism.

\paragraph{Non-positive dose-response as a signature of bifunctional enzyme architecture.}
The ACR and product robustness properties of the bifunctional enzyme networks directly constrain the dose-response of $S_2$ with respect to total substrate $T_s$. Networks with ACR in $S_2$ exhibit a rigorously zero dose-response for all parameter values. For the $(E,E,C_1,C_2)$ and $(E,E,C_1,C_1)$ networks, numerical evidence and the parameterization suggest a negative dose-response in $S_2$, and we conjecture this holds for all parameter values; a key step toward a rigorous proof would be establishing that the positive steady state with the larger value of $u$ is always locally asymptotically stable, which we intend to pursue in future work. In none of the five networks does $S_2$ exhibit a positive dose-response, in contrast to the non-bifunctional case where a biphasic response has been shown to arise from substrate sequestration effects \cite{suwanmajo2013biphasic}. The bifunctional architecture appears to fundamentally alter the dose-response landscape, replacing biphasic behavior with zero or purely negative responses — a consequence of the robustness constraints imposed by the bifunctional enzyme structure and a direction we plan to develop further.

The above discussion takes $S_2$ as the network output. The bifunctional architecture is inherently directional: $S_0$ appears only as the tail of a $c$-edge in the substrate hypergraph and is therefore structurally excluded from having ACR, while $S_2$ and $S_1$ appear as tails of $e$-edges and can have ACR \cite{joshi2026bifunctional}. Correspondingly, $S_0$ shows a positive dose-response in all networks studied, while $S_2$ shows a non-positive one. Thus regardless of which end-state is taken as the output, the response is monophasic, in contrast to the biphasic response possible in the non-bifunctional symmetric setting.

\paragraph{Bistability without bound: a network operating at two extremes.}
As established in Remark~\ref{rem:eec1c1_asymptotics}, the $(E,E,C_1,C_1)$ network is bistable between two qualitatively extreme phosphorylation states 
for all $T_s$ above the saddle-node threshold, with the hypophosphorylated state becoming the increasingly dominant attractor as $T_s$ grows. This can be considered \emph{asymptotic bistability} if we take into consideration both boundary and positive steady states. In particular, 
both stable steady states
persist as $T_s\to\infty$.

Such asymptotic bistability may require a boundary steady state. Without one, even a network that exhibits bistability between two positive steady states can only sustain it over a bounded range of $T_s$. As $T_s \to \infty$, 
one would expect the bistable region to eventually disappear. 
Non-bifunctional networks such as $(E,E,F,F)$ have no boundary steady states and therefore are unlikely to exhibit this behavior.

\appendix

\section*{Acknowledgements}

BJ is grateful for support from AMS-Simons Research Enhancement Grants for Primarily Undergraduate Institution (PUI). MDJ is supported by National Science Foundation Grant No. DMS-2213390. MDJ, BJ, and TDN gratefully acknowledge the University of Wisconsin-Madison and the 4th Workshop on Mathematics of Reaction Networks for facilitating research discussions.

\bibliographystyle{unsrt}
\bibliography{bib}

\appendix
\section*{Appendix}
\addcontentsline{toc}{section}{Appendix}

This appendix contains detailed algebraic computations and proofs. Section~\ref{sec:appendix_1site} presents the full mass-action system, steady-state parameterization, proofs, and stability analysis for the single-site network $(E,C_1)$. Section~\ref{sec:appendix_bss} contains the next-generation matrix calculations establishing boundary steady-state stability for all four dual-site networks.

\section{Single-Site Futile Cycle with Bifunctional Enzyme}
\label{sec:appendix_1site}

We present here the full details of the single-site futile cycle with a bifunctional enzyme introduced in Section~3 of the main paper, including the mass-action system, steady state parameterization, and proofs of Theorem~3.1 and Theorem~3.2.

The single-site substrate modification network in Figure Figure~2 of the main paper corresponds to the following mass-action system
\begin{equation}\label{eq:ODE_1site}\left\{ \; \;  \begin{aligned}
\frac{ds_0}{dt} &= -k_1^+ s_0e + k_1^- c_1 + k_2 c_2, \quad & & \frac{ds_1}{dt} = k_1 c_1 - k_2^+ s_1c_1 + k_2^- c_2, \\
\frac{de}{dt}   &= -k_1^+ s_0e + (k_1^- + k_1) c_1, \\
\frac{dc_1}{dt} &= k_1^+ s_0e - (k_1^- + k_1) c_1 - k_2^+ s_1c_1 + (k_2^- + k_2) c_2,  \quad & & \frac{dc_2}{dt} = k_2^+ s_1c_1 - (k_2^- + k_2) c_2,
\end{aligned}\right.\end{equation}
which has conservation laws
\begin{equation}\label{eq:cons_1site}
    \begin{aligned}
T_s & = s_0 + s_1 + c_1 + 2c_2, \\
T_e & = e + c_1 + c_2.
\end{aligned}
\end{equation}

We obtain a steady state parameterization of the mass-action system \eqref{eq:ODE_1site} based on the approach in \cite{joshi2026bifunctional}.
The mass-action system \eqref{eq:ODE_1site} can be written as:
\begin{equation}\label{eq:ODE_1site_GL}\left\{ \; \;  \begin{aligned}
\frac{ds_0}{dt} &= L_0-G_1, \quad \frac{ds_1}{dt} = L_1-G_2, \\
\frac{de}{dt}   &= -G_1, \\
\frac{dc_1}{dt} &= G_1-G_2, \quad \frac{dc_2}{dt} = G_2.
\end{aligned}\right.\end{equation}
where 
\[
G_1= k_1^+ s_0e - (k_1^- + k_1) c_1 \quad \text{and}\quad G_2=k_2^+ s_1c_1 - (k_2^- + k_2) c_2
\]
represent the enzyme-holding or compound-producing rates of edge 1 $(S_0\to S_1)$ and edge 2 $(S_1\to S_0)$ respectively. The expressions
\[
L_0 = k_2 c_2 - k_1c_1\quad \text{and} \quad L_1=k_1c_1-k_2c_2
\]
represent the currents into a substrate minus the currents out of the substrate (for more details on the concept of currents on a substrate hypergraph, see \cite{joshi2026bifunctional}). It is important to note that since $C_1$ is the intermediate compound in edge 1 and the enzyme in edge 2, the equation of $c_1$ is the sum of its contributions in both edges: $\frac{dc_1}{dt}=G_1-G_2$.

\begin{lemma}[\cite{joshi2026bifunctional}, Lemma S12.3]\label{lem:GL}
    The steady state equations for \eqref{eq:ODE_1site_GL} are equivalent to
\begin{align*}
    &\text{(edge conditions)} \quad G_\ell=0 \quad \text{for}\quad \ell=1,2\\
    &\text{(node conditions)}\quad L_i=0 \quad \text{for} \quad i=0,1.
\end{align*}
\end{lemma}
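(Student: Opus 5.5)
The plan is to work directly from the rewritten form \eqref{eq:ODE_1site_GL} of the mass-action system \eqref{eq:ODE_1site}, after first confirming that this rewrite is correct. Expanding $G_1$ and $L_0$ gives
\[
L_0 - G_1 = k_2c_2 - k_1c_1 - k_1^+s_0e + (k_1^-+k_1)c_1 = -k_1^+s_0e + k_1^-c_1 + k_2c_2,
\]
which is $\frac{ds_0}{dt}$ in \eqref{eq:ODE_1site}. The same expansion checks the remaining four equations. In the $c_1$ equation, $C_1$ enters both as the compound of edge~1 and as the enzyme of edge~2, which is why $\frac{dc_1}{dt}=G_1-G_2$. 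Hence a point is a steady state of \eqref{eq:ODE_1site} exactly when all five right-hand sides in \eqref{eq:ODE_1site_GL} vanish.

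For the forward direction, suppose $x^*$ is a steady state. The equation $\frac{de}{dt}=-G_1=0$ gives $G_1=0$, and $\frac{dc_2}{dt}=G_2=0$ gives $G_2=0$; these are the edge conditions. Substituting them into the substrate equations turns $\frac{ds_0}{dt}=L_0-G_1=0$ into $L_0=0$ and $\frac{ds_1}{dt}=L_1-G_2=0$ into $L_1=0$; these are the node conditions.

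For the converse, assume $G_1=G_2=0$ and $L_0=L_1=0$. Then every right-hand side of \eqref{eq:ODE_1site_GL} is zero: $-G_1$, $G_1-G_2$, $G_2$, $L_0-G_1$ and $L_1-G_2$ all vanish, so the point is a steady state. The $c_1$ equation imposes nothing new, since it is the combination $G_1-G_2$ of the edge conditions. Likewise $L_0=-L_1$ identically, so the two node conditions are really a single condition $k_1c_1=k_2c_2$; this redundancy is harmless for the equivalence.

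There is no real obstacle here. The only step needing care is the bookkeeping check that \eqref{eq:ODE_1site_GL} reproduces \eqref{eq:ODE_1site}, in particular the double role of $C_1$ in the $c_1$ equation. Once that is verified, the equivalence follows by the triangular elimination above: the $e$ and $c_2$ equations isolate $G_1$ and $G_2$, and the substrate equations then isolate $L_0$ and $L_1$.
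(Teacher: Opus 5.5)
Your argument is correct. The rewrite \eqref{eq:ODE_1site_GL} does reproduce \eqref{eq:ODE_1site} term by term. Since the $e$- and $c_2$-equations are exactly $-G_1$ and $G_2$, the edge conditions are forced first, and the substrate equations then reduce to $L_0=L_1=0$. Your remark that $L_0=-L_1$ identically is also right.

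The paper gives no argument of its own here. It imports the statement from Lemma S12.3 of \cite{joshi2026bifunctional}, which is formulated for the general class of detailed models over a substrate hypergraph. The paper reuses the same citation unchanged for the dual-site system (Lemma~\ref{lem:GL_2site}). What the citation buys is generality. What your direct route buys is self-containment, and it also checks that the mass-action system really has the required $G/L$ form, which the citation takes for granted.

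Your specific shortcut is special to the single-site case. In the dual-site system the free-enzyme equation only gives the sum $-G_1-G_2$. There you would instead start from the terminal compounds ($dc_3/dt=G_3$, $dc_4/dt=G_4$) and work back through the $c_1$- and $c_2$-equations before reaching the substrate equations. The triangular elimination idea is the same; only the order of peeling changes.
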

For convenience, we recall that $k_\ell^* \coloneqq k_\ell k_\ell^+ / (k_\ell + k_\ell^-)$ (Notation equation~(2.3) of the main paper).
The next lemma follows directly from Lemma Lemma~S1.1.
\begin{lemma}\label{lem:SSP_1site}
    All steady states of \eqref{eq:ODE_1site} must satisfy:
    \[
    k_1^*s_0 e= k_1c_1 = k_2^*s_1c_1 = k_2c_2.  
    \]
\end{lemma}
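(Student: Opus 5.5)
We derive the chain of equalities directly from Lemma~\ref{lem:GL}, handling the two edge conditions and then the node condition.

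\textbf{Edge 1.} The condition $G_1=0$ reads $k_1^+ s_0 e = (k_1^-+k_1)c_1$. Multiplying both sides by $k_1/(k_1+k_1^-)$ and using the notation $k_1^* = k_1k_1^+/(k_1+k_1^-)$ gives $k_1^* s_0 e = k_1 c_1$.

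\textbf{Edge 2.} The condition $G_2=0$ reads $k_2^+ s_1 c_1 = (k_2^-+k_2)c_2$. Multiplying by $k_2/(k_2+k_2^-)$ in the same way gives $k_2^* s_1 c_1 = k_2 c_2$.

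\textbf{Linking the two chains.} The node condition $L_0=0$ (equivalently $L_1=0$, since $L_1=-L_0$) is exactly $k_1c_1 = k_2c_2$. This is the middle equality that joins the two chains above.

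Concatenating the three equalities yields $k_1^* s_0 e = k_1 c_1 = k_2 c_2 = k_2^* s_1 c_1$, which is the claimed chain up to reordering.

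There is no real obstacle here. The only point needing care is that Lemma~\ref{lem:GL} gives an equivalence, so the argument applies to all steady states, including boundary ones. No division by a possibly zero concentration occurs: we only multiply by the positive constants $k_\ell/(k_\ell+k_\ell^-)$.
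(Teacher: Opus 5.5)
Your proof is correct and is exactly the argument the paper has in mind: the paper states only that the lemma follows directly from Lemma~\ref{lem:GL}. You have written that derivation out, rescaling each edge condition $G_\ell=0$ by the positive constant $k_\ell/(k_\ell+k_\ell^-)$ and joining the two resulting equalities through the node condition $k_1c_1=k_2c_2$, and you correctly note that no division by a concentration occurs, so boundary steady states are covered too.
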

Lemma Lemma~S1.2 allows us to directly obtain the following parameterization.
\begin{proposition}\label{prop:PSSP_1site}
We define $u:=c_1/e$. The positive steady states of \eqref{eq:ODE_1site} can be parameterized in terms of $e$ and $u$ as follows
\[
s_0 = \frac{k_1}{k_1^*} u, \quad s_1 = \frac{k_1}{k_2^*}, \quad c_1 = ue, \quad c_2 = \frac{k_1}{k_2} ue.
\]
In particular, the model \eqref{eq:ODE_1site} has ACR in species $S_1$ with ACR value $k_1/k_2^*$ (Theorem~Theorem~3.3).
\end{proposition}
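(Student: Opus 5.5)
The statement to prove is Proposition~\ref{prop:PSSP_1site}. The plan is to read the parameterization directly off Lemma~\ref{lem:SSP_1site}, which gives, at every steady state,
\[
k_1^* s_0 e = k_1 c_1 = k_2^* s_1 c_1 = k_2 c_2 .
\]
Afterwards I will check the converse, namely that every point of the stated form is a steady state.

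\textbf{Forward direction.} Take a positive steady state and set $u := c_1/e > 0$, so $c_1 = ue$.
\begin{itemize}
\item From $k_1^* s_0 e = k_1 c_1 = k_1 u e$, divide by $e>0$ to get $s_0 = (k_1/k_1^*)\,u$.
\item From $k_1 c_1 = k_2^* s_1 c_1$, divide by $c_1>0$ to get $s_1 = k_1/k_2^*$.
\item From $k_2 c_2 = k_1 c_1$ we get $c_2 = (k_1/k_2)\,ue$.
\end{itemize}
Hence every positive steady state has the stated form, with $(e,u)\in\mathbb{R}^2_{>0}$.

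\textbf{Converse.} Substitute the parameterization into the edge and node conditions of Lemma~\ref{lem:GL}.

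For the edge conditions, recall $k_\ell^*(k_\ell + k_\ell^-) = k_\ell k_\ell^+$. Then
\[
G_1 = k_1^+ \tfrac{k_1}{k_1^*} u e - (k_1^- + k_1)\, u e = 0,
\]
and similarly
\[
G_2 = k_2^+ \tfrac{k_1}{k_2^*}\, u e - (k_2^- + k_2)\tfrac{k_1}{k_2}\, u e = 0 .
\]
For the node conditions, $L_0 = -L_1 = k_2 c_2 - k_1 c_1 = 0$.

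So by Lemma~\ref{lem:GL} every such point is a steady state, and it is positive. The map $(e,u)\mapsto$ state is injective, since $e$ and $u = c_1/e$ are recovered from the coordinates.

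\textbf{ACR claim.} Since $s_1 = k_1/k_2^*$ at every positive steady state, and positive steady states exist for every choice of rate constants (take, e.g., $e=u=1$), ACR holds in $S_1$ with value $k_1/k_2^*$. This matches Theorem~\ref{thm:ACR_1site}.

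There is no genuine obstacle here. The only point needing care is the converse verification, which hinges on the identity $k_\ell^*(k_\ell + k_\ell^-) = k_\ell k_\ell^+$.
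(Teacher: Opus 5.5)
Your proof is correct and follows the paper's route: the paper obtains the parameterization by reading it directly off the chain of equalities in Lemma~\ref{lem:SSP_1site}, exactly as in your forward direction. Your converse check through the edge and node conditions of Lemma~\ref{lem:GL}, and the explicit witness $e=u=1$ showing that a positive steady state exists (which the ACR definition requires), are correct details that the paper leaves implicit.
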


We now present proofs of Theorem~3.1 and Theorem~3.2.
\begin{proof}[Proof of Theorem~3.1]
From Lemma Lemma~S1.1, at any boundary steady state we must have $c_1=c_2=0$. Thus from the conservation laws \eqref{eq:cons_1site}, we have $e=T_e>0$ at any boundary steady state. Since $s_0e=0$, we must have $s_0=0$ and $s_1=T_s$. Therefore, there is a unique boundary steady state with $e=T_e$ and $s_1=T_s$. It is easy to verify that at this boundary steady state, all reaction rates are zero, which implies the steady state is a dead steady state.

To examine the stability of the boundary steady state, we utilize the next-generation matrix method introduced in Section~S4. On the species which go to zero, we have the reduced system:
\[ \left\{ \; \; \;
\begin{aligned}
\frac{d s_0}{dt} & = -k_1^+ s_0e + k_1^-c_1 + k_2c_2\\
\frac{d c_1}{dt} & = k_1^+s_0e - (k_1^- + k_1)c_1 - k_2^+s_1c_1 + (k_2^- + k_2)c_2\\
\frac{d c_2}{dt} & = k_2^+s_1c_1 - (k_2^{-} + k_2) c_2
\end{aligned}
\right. \]
We utilize the following decomposition $f(x) = \mathcal{F} - \mathcal{V}$:
\[
\begin{aligned}
\mathcal{F} & = \langle k_2 c_2, 0, 0 \rangle,\\
\mathcal{V} & = \langle k_1^{+} s_0 e - k_1^{-} c_1,\; 
-k_1^{+} s_0 e + (k_1^{-} + k_1) c_1 + k_2^{+} s_1 c_1 - (k_2^{-} + k_2) c_2,\; 
 - k_2^{+} s_1 c_1 + (k_2^{-} + k_2) c_2 \rangle.
\end{aligned}
\]
We take the Jacobians of $\mathcal{F}$ and $\mathcal{V}$ with respect to $\{ s_0, c_1, c_2 \}$ and evaluate them at the boundary steady state $x^*$ to obtain
\[
F = \begin{bmatrix}
0 & 0 & k_2 \\
0 & 0 & 0 \\
0 & 0 & 0 
\end{bmatrix}
\quad \text{and} \quad
V = \begin{bmatrix}
k_1^{+} T_e & -k_1^{-} & 0 \\
-k_1^{+} T_e & k_2^{+} T_s + k_1^{-} + k_1 & -k_2^{-} - k_2 \\
0 & -k_2^{+} T_s & k_2^{-} + k_2
\end{bmatrix}.
\]
We now compute
\[
V^{-1} = \left[ \begin{array}{ccc} 
\displaystyle \frac{1}{k_1^* T_e} 
& \displaystyle \frac{k_1^{-}}{k_1^{+} k_1 T_e} 
& \displaystyle \frac{k_1^{-}}{k_1^{+} k_1 T_e} \\[1em]
\displaystyle \frac{1}{k_1} 
& \displaystyle \frac{1}{k_1} 
& \displaystyle \frac{1}{k_1} \\[1em]
\displaystyle \frac{k_2^{*} T_s}{k_1 k_2} 
& \displaystyle \frac{k_2^{*} T_s}{k_1 k_2} 
& \displaystyle \frac{k_2^*T_s}{k_1 k_2} + \frac{k_2^*}{k_2^+k_2}
\end{array} \right]
\]
so that
\[
FV^{-1} = 
\left[ \begin{array}{ccc}
\displaystyle \frac{k_2^{*} T_s}{k_1} 
& \displaystyle \frac{k_2^{*} T_s}{k_1} 
& \displaystyle \frac{k_2^*T_s}{k_1} + \frac{k_2^*}{k_2^+} \\
0 & 0 & 0 \\
0 & 0 & 0
\end{array} \right].
\]
We have
\[
\rho(FV^{-1}) 
= \frac{k_2^*}{k_1} T_s.
\]
It follows by Theorem~2.4 that $x^*$ is locally asymptotically stable for every stoichiometric compatibility class satisfying $\displaystyle T_s < k_1/k_2^*$ ($\rho(FV^{-1}) < 1$) and unstable for every compatibility class satisfying $\displaystyle T_s >
 k_1/k_2^*$ ($\rho(FV^{-1}) > 1$).
\end{proof}

\begin{proof}[Proof of Theorem~3.2]

Plugging the steady state parameterization in Proposition Theorem~3.1 into the enzyme conservation $T_e = e+c_1+c_2$ and substrate conservation $T_s = s_0+s_1+c_1+2c_2$ yields

\begin{align}
\label{eq:ec1_cons}
    T_e = e \left(1+ \left(1 + \frac{k_1}{k_2} \right)u \right), \quad \quad 
    T_s = \frac{k_1}{k_1^*} u + \frac{k_1}{k_2^*} + \left( 1 + \frac{2k_1}{k_2}\right) ue. 
\end{align}
Eliminating $e$ in \eqref{eq:ec1_cons} gives us 
\begin{align}
\label{eq:e_u}
    e = \frac{T_e}{\left(1+ \left(1 + \frac{k_1}{k_2} \right)u \right)}. 
\end{align}
Substituting \eqref{eq:e_u} into \eqref{eq:ec1_cons} leads to a single equation in terms of the variable $u$: 
\begin{align*} \label{eq:Ts_func_u_ec1}
    T_s = \frac{k_1}{k_1^*} u + \frac{k_1}{k_2^*} + \frac{\left( 1 + \frac{2k_1}{k_2}\right) uT_e}{1+ \left(1 + \frac{k_1}{k_2} \right) u},
\end{align*}
which is equivalent to the following polynomial equation in $u$: 
\begin{equation}\label{eq:u_1site}
    \frac{k_1}{k_1^*} \left(1 + \frac{k_1}{k_2} \right) u^2 + \left[ \frac{k_1}{k_1^*} + \left( 1 + \frac{2k_1}{k_2}\right) T_e + \left(1 + \frac{k_1}{k_2} \right) \left(\frac{k_1}{k_2^*} - T_s\right)\right] u + \left(\frac{k_1}{k_2^*} - T_s\right) = 0. 
\end{equation}
From \eqref{eq:e_u} and the parameterization in Proposition Theorem~3.1, there is a correspondence between the positive roots of \eqref{eq:u_1site} and the positive steady states of the model \eqref{eq:ODE_1site}. It is straightforward to see that \eqref{eq:u_1site} has a positive root if and only if $T_s > k_1/k_2^*$. 
As a result, there is a unique positive steady state in every compatibility class with $T_s > k_1/k_2^*$. 

To show the local stability of this unique positive steady state, we use the Routh-Hurwitz stability criterion \cite{hurwitz1964conditions}. 
Using the two conservation laws, we eliminate two species $e$ and $s_0$. Using the parameterization in Proposition Theorem~3.1, the Jacobian matrix of the three-dimensional system in the variables $c_1, c_2$ and $s_1$ can be written in terms of the rate constants, $T_e$, and a single concentration-related variable $u$:
\begin{equation}
\displaystyle
J=\begin{bmatrix}
-k_1^- (1 + u) - \frac{k_2 k_1^+ T_e}{k_2 + (k_1 + k_2) u} - \frac{k_1 (k_2^- + k_2 (2 + u))}{k_2} & k_2 + k_2^- - (k_1 + k_1^-) u - \frac{2 k_2 k_1^+ T_e}{k_2 + (k_1 + k_2) u} & -\frac{k_2 T_e (k_1^+ + k_2^+ u)}{k_2 + (k_1 + k_2) u} \\
\frac{k_1 (k_2 + k_2^-)}{k_2} & -k_2 - k_2^- & \frac{k_2 k_2^+ T_e u}{k_2 + (k_1 + k_2) u} \\
-\frac{k_1 k_2^-}{k_2} & k_2^- & -\frac{k_2 k_2^+ T_e u}{k_2 + (k_1 + k_2) u}
\end{bmatrix}
\end{equation}
The characteristic polynomial can be written as
\[
p(\lambda) = \frac{a_0 + a_1 \lambda + a_2 \lambda^2 + a_3 \lambda^3}{k_2 (k_2 + (k_1 + k_2) u)^2}, 
\]
where the coefficients, computed using Mathematica, are given by:
\begin{equation} \label{eq:coeffscharpoly_ec1}
    \begin{aligned}
a_0 &= k_2^2 k_2^+ T_e u (k_2 (k_2 (k_1 + k_1^-) + (2 k_1 + k_2) k_1^+ T_e) \\
&\quad + 2 k_2 (k_1 + k_2) (k_1 + k_1^-) u + (k_1 + k_2)^2 (k_1 + k_1^-) u^2), \\
a_1 &= k_1^4 (k_2 + k_2^-) u^3 + k_1^3 (k_2 + k_2^-) u^2 (k_1^- u + 3 k_2 (1 + u)) \\
&\quad + k_1^2 k_2 u (k_2^- k_1^+ T_e + 3 k_1^- k_2^- u (1 + u) + 3 k_2^2 (1 + u)^2 \\
&\quad + k_2 (2 k_1^+ T_e + 3 k_1^- u (1 + u) + 3 k_2^- (1 + u)^2 + k_2^+ T_e u (2 + u))) \\
&\quad + k_2^3 ((k_2^- + k_2^- u + k_2^+ T_e u) (k_1^+ T_e + k_1^- (1 + u)^2) \\
&\quad + k_2 (1 + u) (k_1^- (1 + u)^2 + T_e (k_1^+ + k_2^+ u))) \\
&\quad + k_1 k_2^2 (k_1^- k_2^+ T_e u^2 (1 + u) + 3 k_1^- k_2^- u (1 + u)^2 + k_2^2 (1 + u)^3 \\
&\quad + k_2^- k_1^+ (T_e + 2 T_e u) + k_2 (3 k_1^- u (1 + u)^2 + k_2^- (1 + u)^3 \\
&\quad + k_1^+ T_e (2 + 3 u) + k_2^+ T_e u (2 + u (4 + u)))), \\
a_2 &= (k_2 + (k_1 + k_2) u) (k_2^2 (k_2 + k_1^- + k_2^- + k_1^+ T_e \\
&\quad + (k_2 + 2 k_1^- + k_2^- + k_2^+ T_e) u + k_1^- u^2) \\
&\quad + k_1^2 u (k_2^- + k_2 (2 + u)) \\
&\quad + k_1 k_2 (k_2^- + u (k_1^- + 2 k_2^- + k_1^- u) + k_2 (2 + u (4 + u)))), \\
a_3 &= k_2 (k_2 + (k_1 + k_2) u)^2.
\end{aligned}
\end{equation}
Using the Routh-Hurwitz criterion, to prove the local stability of the positive steady state, it suffices to show that $a_i>0$ for $i=0,1,2,3$ and that $a_1a_2>a_0a_3$. We verify these conditions directly with Mathematica.
\end{proof}

The explicit coordinates of the unique positive steady state can be derived as follows. Starting from \eqref{eq:u_1site}, define the dimensionless parameters

\begin{equation}
    \frac{k_1}{k_1^*} \left(1 + \frac{k_1}{k_2} \right) u^2 + \left[ \frac{k_1}{k_1^*} + \left( 1 + \frac{2k_1}{k_2}\right) T_e + \left(1 + \frac{k_1}{k_2} \right) \left(\frac{k_1}{k_2^*} - T_s\right)\right] u + \left(\frac{k_1}{k_2^*} - T_s\right) = 0. 
\end{equation}

\medskip
\noindent\textit{Explicit steady state coordinates.} Define the dimensionless parameters:
\begin{equation}
\kappa_2 = \frac{k_1}{k_2}, \quad \kappa_2^* =   T_s - \frac{k_1}{k_2^*}, \quad \kappa_1^* = \frac{k_1}{k_1^*}.
\end{equation}

A necessary and sufficient condition for the existence of a positive zero (and therefore a positive steady state) is that $T_s >k_1/k_2^*$, and so $\kappa_2^*$ is greater than zero at any positive steady state.

The discriminant is given by:
\begin{equation}
\Delta = \left(\kappa_1^* - \kappa_2^* (1 + \kappa_2) + (1 + 2 \kappa_2) T_e\right)^2 + 4 \kappa_1^* \kappa_2^* (1 + \kappa_2) .
\end{equation}

The positive solution for $u$ is:
\begin{equation}
u = \frac{\sqrt{\Delta}  + \kappa_2^* (1 + \kappa_2) - \kappa_1^* - (1 + 2 \kappa_2) T_e }{2 \kappa_1^* (1 + \kappa_2)}.
\end{equation}

Multiplying by $\kappa_1^*$ gives the $s_0$ coordinate: $s_0 = \kappa_1^* u$.

\section{Next-Generation Matrix Method}\label{sec:appendix_bss}

In this Appendix, we prove the boundary stability results from Section~4 of the main paper using the next-generation matrix method \cite{johnston2026boundary}.

\begin{proof}[Proof of Theorem~4.5 part $1.$, $(E,E,C_2,C_1)$ network]
Consider the system of differential equations \eqref{eq:c2c1de}:
\begin{equation}
\label{eq:c2c1de}
\left\{ \; \; \;
\begin{aligned}
\frac{d s_0}{dt} &= -k_1^{+} s_0 e + k_1^{-} c_1 + k_4 c_4, \\
\frac{d s_1}{dt} &= -k_4^{+} s_1 c_1 - k_2^{+} s_1 e + k_1 c_1 + k_2^{-} c_2 + k_3 c_3 + k_4^{-} c_4, \\
\frac{d s_2}{dt} &= -k_3^{+} s_2 c_2 + k_2 c_2 + k_3^{-} c_3, \\
\frac{d e}{dt} &= -k_1^{+} s_0 e + (k_1^{-} + k_1) c_1 - k_2^{+} s_1 e + (k_2^{-} + k_2) c_2, \\
\frac{d c_1}{dt} &= k_1^{+} s_0 e - (k_1^{-} + k_1) c_1 - k_4^{+} s_1 c_1 + (k_4^{-} + k_4) c_4, \\
\frac{d c_2}{dt} &= k_2^{+} s_1 e - (k_2^{-} + k_2) c_2 - k_3^{+} s_2 c_2 + (k_3^{-} + k_3) c_3, \\
\frac{d c_3}{dt} &= k_3^{+} s_2 c_2 - (k_3^{-} + k_3) c_3, \\
\frac{d c_4}{dt} &= k_4^{+} s_1 c_1 - (k_4^{-} + k_4) c_4.
\end{aligned}
\right.
\end{equation}
This system has the dead boundary steady state $(s_0, s_1,  s_2, e, c_1, c_2, c_3, c_4) = (0, 0, T_s, T_e, 0,0,0,0)$ and the living boundary steady state
\begin{equation}
\label{eq:living}
(s_0, s_1,  s_2, e, c_1, c_2, c_3, c_4) = \left(0, s_1, \frac{k_2}{k_3^*}, e, 0, \frac{e \, s_1 \, k_2^*}{k_2}, \frac{e \, s_1 \, k_2^* }{k_3}, 0\right).
\end{equation}
where $s_1$ and $e$ are functions of $T_s$ and $T_e$. 

The system \eqref{eq:c2c1de} restricted to the vanishing species of the dead boundary steady state is
\[
\left\{ \; \; \;
\begin{aligned}
\frac{d s_0}{dt} &= -k_1^{+} s_0 e + k_1^{-} c_1 + k_4 c_4, \\
\frac{d s_1}{dt} &= -k_4^{+} s_1 c_1 - k_2^{+} s_1 e + k_1 c_1 + k_2^{-} c_2 + k_3 c_3 + k_4^{-} c_4, \\
\frac{d c_1}{dt} &= k_1^{+} s_0 e - (k_1^{-} + k_1) c_1 - k_4^{+} s_1 c_1 + (k_4^{-} + k_4) c_4, \\
\frac{d c_2}{dt} &= k_2^{+} s_1 e - (k_2^{-} + k_2) c_2 - k_3^{+} s_2 c_2 + (k_3^{-} + k_3) c_3, \\
\frac{d c_3}{dt} &= k_3^{+} s_2 c_2 - (k_3^{-} + k_3) c_3, \\
\frac{d c_4}{dt} &= k_4^{+} s_1 c_1 - (k_4^{-} + k_4) c_4.
\end{aligned}
\right.
\]
We utilize the following decomposition $f(x) = \mathcal{F} - \mathcal{V}$ of \eqref{eq:c2c1de}:
\[\mathcal{F} = \langle 0, k_3 c_3, 0, 0, 0, 0 \rangle\]
and
\[\mathcal{V} = \begin{bmatrix}
k_1^+ s_0 e - k_1^- c_1 - k_4 c_4 \\
k_4^+ s_1 c_1 + k_2^+ s_1 e - k_1 c_1 - k_2^- c_2 - k_4^- c_4 \\
-k_1^+ s_0 e + (k_1^- + k_1) c_1 + k_4^+ s_1 c_1 - (k_4^- + k_4) c_4 \\
-k_2^+ s_1 e + (k_2^- + k_2) c_2 + k_3^+ s_2 c_2 - (k_3^- + k_3) c_3 \\
-k_3^+ s_2 c_2 + (k_3^- + k_3) c_3 \\ 
-k_4^+ s_1 c_1 + (k_4^- + k_4) c_4
\end{bmatrix}
\]
We take the Jacobians of $\mathcal{F}$ and $\mathcal{V}$ with respect to $\{ s_0, s_1, c_1, c_2, c_3, c_4 \}$ and evaluate them at the dead boundary steady state $x^*$ to obtain
\[
F = \begin{bmatrix}
0 & 0 & 0 & 0 & 0 & 0 \\
0 & 0 & 0 & 0 & k_3 & 0 \\
0 & 0 & 0 & 0 & 0 & 0 \\
0 & 0 & 0 & 0 & 0 & 0 \\
0 & 0 & 0 & 0 & 0 & 0 \\
0 & 0 & 0 & 0 & 0 & 0
\end{bmatrix}
\quad \text{and} \quad
V = \begin{bmatrix}
k_1^+ T_e & 0 & -k_1^- & 0 & 0 & -k_4 \\
0 & k_2^+ T_e & -k_1 & -k_2^- & 0 & -k_4^- \\
-k_1^+ T_e & 0 & k_1^- + k_1 & 0 & 0 & -k_4^- - k_4 \\
0 & -k_2^+ T_e & 0 & k_3^+ T_s + k_2 + k_2^- & -k_3^- - k_3 & 0 \\
0 & 0 & 0 & -k_3^+ T_s & k_3^- + k_3 & 0 \\
0 & 0 & 0 & 0 & 0 & k_4^- + k_4
\end{bmatrix}
\]
We now compute
\[
V^{-1} = \begin{bmatrix}
\frac{1}{k_1^* T_e} & 0 & \frac{k_1^-}{k_1 k_1^+ T_e} & 0 & 0 & \frac{k_4^*}{k_1^*k_4^+T_e}+\frac{k_1^-}{k_1k_1^+T_e}\\
\frac{1}{k_2^* T_e} & \frac{1}{k_2^* T_e} & \frac{1}{k_2^* T_e} & \frac{k_2^-}{k_2 k_2^+ T_e} & \frac{k_2^-}{k_2 k_2^+ T_e} & \frac{2}{k_2^* T_e} \\
\frac{1}{k_1} & 0 & \frac{1}{k_1} & 0 & 0 & \frac{1}{k_1} + \frac{k_4^*}{k_1k_4^+} \\
\frac{1}{k_2} & \frac{1}{k_2} & \frac{1}{k_2} & \frac{1}{k_2} & \frac{1}{k_2} & \frac{2}{k_2} \\
\frac{k_3^* T_s}{k_2 k_3} & \frac{k_3^* T_s}{k_2 k_3} & \frac{k_3^* T_s}{k_2 k_3} & \frac{k_3^* T_s}{k_2 k_3} & \frac{k_3^*T_s}{k_2 k_3} + \frac{k_3^*}{k_3^+k_3} & \frac{2k_3^*T_s}{k_2 k_3} \\
0 & 0 & 0 & 0 & 0 & \frac{k_4^*}{k_4 k_4^+}
\end{bmatrix}
\]
so that
\[
FV^{-1} = \begin{bmatrix}
0 & 0 & 0 & 0 & 0 & 0 \\
\frac{k_3^* T_s}{k_2} & \frac{k_3^* T_s}{k_2} & \frac{k_3^* T_s}{k_2} & \frac{k_3^* T_s}{k_2} & \frac{k_3^*T_s}{k_2} + \frac{k_3^*}{k_3^+} &\frac{2k_3^* T_s}{k_2} \\
0 & 0 & 0 & 0 & 0 & 0 \\
0 & 0 & 0 & 0 & 0 & 0 \\
0 & 0 & 0 & 0 & 0 & 0 \\
0 & 0 & 0 & 0 & 0 & 0
\end{bmatrix}
\]
We have
\[
\rho(FV^{-1}) = \frac{k_3^* T_s}{k_2}
\]
It follows by Theorem~2.4 that $x^*$ is locally asymptotically stable for every stoichiometric compatibility class satisfying $T_s < k_2/k_3^*$ and unstable for every compatibility class satisfying $T_s > k_2/k_3^*$.

The system \eqref{eq:c2c1de} restricted to the vanishing species of the living boundary steady state is:
\[
\left\{ \; \; \;
\begin{aligned}
\frac{d s_0}{dt} &= -k_1^{+} s_0 e + k_1^{-} c_1 + k_4 c_4, \\
\frac{d c_1}{dt} &= k_1^{+} s_0 e - (k_1^{-} + k_1) c_1 - k_4^{+} s_1 c_1 + (k_4^{-} + k_4) c_4, \\
\frac{d c_4}{dt} &= k_4^{+} s_1 c_1 - (k_4^{-} + k_4) c_4.
\end{aligned}
\right.
\]
We utilize the following decomposition $f(x) = \mathcal{F} - \mathcal{V}$ of \eqref{eq:c2c1de}:
\[\mathcal{F} = \langle k_4 c_4, 0, 0 \rangle\]
and
\[\mathcal{V} = \left\langle 
k_1^+ s_0 e - k_1^- c_1, \quad 
-k_1^+ s_0 e + (k_1^- + k_1) c_1 + k_4^+ s_1 c_1 - (k_4^- + k_4) c_4, \quad 
-k_4^+ s_1 c_1 + (k_4^- + k_4) c_4 
\right\rangle\]
We take the Jacobians of $\mathcal{F}$ and $\mathcal{V}$ with respect to $\{ s_0, c_1, c_4 \}$ and evaluate them at the living boundary state to obtain
\[
F = \begin{bmatrix}
0 & 0 & k_4 \\
0 & 0 & 0 \\
0 & 0 & 0
\end{bmatrix}
\quad \text{and} \quad
V = \begin{bmatrix}
k_1^+ e & -k_1^- & 0 \\
-k_1^+ e & k_4^+ s_1 + k_1 + k_1^- & -k_4^- - k_4 \\
0 & -k_4^+ s_1 & k_4^- + k_4
\end{bmatrix}
\]
We now compute
\[
V^{-1} = \begin{bmatrix}
\frac{1}{k_1^* e} & \frac{k_1^-}{k_1 k_1^+ e} & \frac{k_1^-}{k_1 k_1^+ e} \\
\frac{1}{k_1} & \frac{1}{k_1} & \frac{1}{k_1} \\
\frac{k_4^* s_1}{k_1 k_4} & \frac{k_4^* s_1}{k_1 k_4} & \frac{k_4^* s_1}{k_1 k_4}+\frac{k_4^*}{k_4k_4^+}
\end{bmatrix}
 \; \; \; \mbox{and} \; \; \;
FV^{-1} =\begin{bmatrix}
\frac{k_4^* s_1}{k_1} & \frac{k_4^* s_1}{k_1} & \frac{k_4^* s_1}{k_1}+\frac{k_4^*}{k_4^+} \\
0 & 0 & 0 \\
0 & 0 & 0
\end{bmatrix}.\]
We have
\[
\rho(FV^{-1}) = \frac{k_4^* s_1}{k_1}
\]

We now consider properties of the living boundary steady state. Note that the subsystem of \eqref{eq:c2c1de} resulting from taking $s_0=c_1=c_4=0$ is invariant. Furthermore, it corresponds precisely to the system \eqref{eq:ODE_1site} after re-indexing the substrates ($S_1 \mapsto S_0$, $S_2 \mapsto S_1$), intermediates ($C_2 \mapsto C_1$, $C_3 \mapsto C_2$), and rate constants ($k_2^{\pm} \mapsto k_1^{\pm}$, $k_3^{\pm} \mapsto k_2^{\pm}$). Furthermore, substituting $s_0=c_1=c_4=0$ into the conservation laws of the main paper yields
\begin{align}
\label{eq:cons1}
T_s & = s_1 + s_2 + c_2 + 2c_3, \\
\label{eq:cons2}
T_e & = e + c_2 + c_3.
\end{align}
which maps precisely to the conservation laws \eqref{eq:cons_1site} with the reindexing. Consequently, we may apply Theorem~3.2 to this subsystem. We have that there is a positive steady state of the subsystem if and only if $T_s > k_2/k_3^*$ and that this steady state is unique and locally asymptotically stable relative to the subsystem whenever it exists. Extending to the full system \eqref{eq:c2c1de}, this steady state corresponds to the living boundary steady state of \eqref{eq:c2c1de}. It follows that the living boundary steady state exists if $T_s > k_2/k_3^*$. Furthermore, since the living boundary steady state is locally asymptotically stable with respect to the intersection of the boundary face and compatibility class and $\rho(FV^{-1}) = k_4^* s_1/k_1$, it follows from Theorem~2.4 that it is stable if $s_1 < k_1/k_4^*$ and unstable if $s_1 > k_1/k_4^*$, where $s_1$ is the $s_1$ entry of the living boundary steady state, and we are done.\\

\end{proof}

\begin{proof}[Proof of Theorem~4.7 part $1.$, $(E,E,C_1,C_1)$ network]
Consider the system of differential equations \eqref{eq:c1c1de}:
\begin{equation}
\label{eq:c1c1de}
\left\{ \; \; \;
\begin{aligned}
\frac{d s_0}{dt} &= -k_1^{+} s_0 e + k_1^{-} c_1 + k_4 c_4, \\
\frac{d s_1}{dt} &= -k_4^{+} s_1 c_1 - k_2^{+} s_1 e + k_1 c_1 + k_2^{-} c_2 + k_3 c_3 + k_4^{-} c_4, \\
\frac{d s_2}{dt} &= -k_3^{+} s_2 c_1 + k_2 c_2 + k_3^{-} c_3, \\
\frac{d e}{dt} &= -k_1^{+} s_0 e + (k_1^{-} + k_1) c_1 - k_2^{+} s_1 e + (k_2^{-} + k_2) c_2, \\
\frac{d c_1}{dt} &= k_1^{+} s_0 e - (k_1^{-} + k_1) c_1 - k_3^{+} s_2 c_1 + (k_3^{-} + k_3) c_3 \\
&\qquad {} - k_4^{+} s_1 c_1 + (k_4^{-} + k_4) c_4, \\
\frac{d c_2}{dt} &= k_2^{+} s_1 e - (k_2^{-} + k_2) c_2, \\
\frac{d c_3}{dt} &= k_3^{+} s_2 c_1 - (k_3^{-} + k_3) c_3, \\
\frac{d c_4}{dt} &= k_4^{+} s_1 c_1 - (k_4^{-} + k_4) c_4.
\end{aligned}
\right.
\end{equation}
This system has the boundary steady states $(s_0, s_1,  s_2, e, c_1, c_2, c_3, c_4) = (0, 0, T_s, T_e, 0,0,0,0)$. The system \eqref{eq:c1c1de} restricted to the vanishing species of the dead boundary steady state is
\[
\left\{ \; \; \;
\begin{aligned}
\frac{d s_0}{dt} &= -k_1^{+} s_0 e + k_1^{-} c_1 + k_4 c_4, \\
\frac{d s_1}{dt} &= -k_4^{+} s_1 c_1 - k_2^{+} s_1 e + k_1 c_1 + k_2^{-} c_2 + k_3 c_3 + k_4^{-} c_4, \\
\frac{d c_1}{dt} &= k_1^{+} s_0 e - (k_1^{-} + k_1) c_1 - k_3^{+} s_2 c_1 + (k_3^{-} + k_3) c_3 \\
&\qquad {} - k_4^{+} s_1 c_1 + (k_4^{-} + k_4) c_4, \\
\frac{d c_2}{dt} &= k_2^{+} s_1 e - (k_2^{-} + k_2) c_2, \\
\frac{d c_3}{dt} &= k_3^{+} s_2 c_1 - (k_3^{-} + k_3) c_3, \\
\frac{d c_4}{dt} &= k_4^{+} s_1 c_1 - (k_4^{-} + k_4) c_4.
\end{aligned}
\right.
\]
We utilize the following decomposition $f(x) = \mathcal{F} - \mathcal{V}$ of \eqref{eq:c1c1de}:
\[\mathcal{F} = \langle 0, k_3 c_3, 0, 0, 0, 0 \rangle\]
and
\[
\mathcal{V} = \begin{bmatrix}
\begin{aligned}
& k_1^+ s_0 e - k_1^- c_1 - k_4 c_4 \\
& k_4^+ s_1 c_1 + k_2^+ s_1 e - k_1 c_1 - k_2^- c_2 - k_4^- c_4 \\
& - k_1^+ s_0 e + (k_1^- + k_1) c_1 + k_3^+ s_2 c_1 - (k_3^- + k_3) c_3 + k_4^+ s_1 c_1 - (k_4^- + k_4) c_4 \\
& - k_2^+ s_1 e + (k_2^- + k_2) c_2 \\
& - k_3^+ s_2 c_1 + (k_3^- + k_3) c_3 \\
& - k_4^+ s_1 c_1 + (k_4^- + k_4) c_4
\end{aligned}
\end{bmatrix}
\]

We take the Jacobians of $\mathcal{F}$ and $\mathcal{V}$ with respect to $\{ s_0, s_1, c_1, c_2, c_3, c_4 \}$ and evaluate them at the boundary steady state $x^*$ to obtain
\[
F = \begin{bmatrix}
0 & 0 & 0 & 0 & 0 & 0 \\
0 & 0 & 0 & 0 & k_3 & 0 \\
0 & 0 & 0 & 0 & 0 & 0 \\
0 & 0 & 0 & 0 & 0 & 0 \\
0 & 0 & 0 & 0 & 0 & 0 \\
0 & 0 & 0 & 0 & 0 & 0
\end{bmatrix} \quad \text{and} \quad V = \begin{bmatrix}
k_1^+ T_e & 0 & -k_1^- & 0 & 0 & -k_4 \\
0 & k_2^+ T_e & -k_1 & -k_2^- & 0 & -k_4^- \\
-k_1^+ T_e & 0 & k_3^+ T_s + k_1 + k_1^- & 0 & -k_3^- - k_3 & -k_4^- - k_4 \\
0 & -k_2^+ T_e & 0 & k_2^- + k_2 & 0 & 0 \\
0 & 0 & -k_3^+ T_s & 0 & k_3^- + k_3 & 0 \\
0 & 0 & 0 & 0 & 0 & k_4^- + k_4
\end{bmatrix}
\]
We now compute
\[
V^{-1} = \begin{bmatrix}
\frac{1}{k_1^* T_e} & 0 & \frac{k_1^-}{k_1 k_1^+ T_e} & 0 & \frac{k_1^-}{k_1 k_1^+ T_e} & \frac{k_4^*}{k_1^*k_4^+T_e}+\frac{k_1^-}{k_1k_1^+T_e} \\
\frac{1}{k_2^* T_e} & \frac{1}{k_2^* T_e} & \frac{1}{k_2^* T_e} & \frac{1}{k_2^* T_e} & \frac{1}{k_2^* T_e} & \frac{2}{k_2^* T_e} \\
\frac{1}{k_1} & 0 & \frac{1}{k_1} & 0 & \frac{1}{k_1} & \frac{1}{k_1} + \frac{k_4^*}{k_1k_4^+} \\
\frac{1}{k_2} & \frac{1}{k_2} & \frac{1}{k_2} & \frac{1}{k_2} & \frac{1}{k_2} & \frac{2}{k_2} \\
\frac{k_3^* T_s}{k_1 k_3} & 0 & \frac{k_3^* T_s}{k_1 k_3} & 0 & \frac{k_3^*T_s}{k_1 k_3} + \frac{k_3^*}{k_3^+k_3} & \frac{k_3^*T_s}{k_1k_3}+\frac{k_3^*k_4^*T_s}{k_1k_3k_4^+} \\
0 & 0 & 0 & 0 & 0 & \frac{k_4^*}{k_4 k_4^+}
\end{bmatrix}
\]

so that
\[
FV^{-1} = \begin{bmatrix}
0 & 0 & 0 & 0 & 0 & 0 \\
\frac{k_3^* T_s}{k_1} & 0 & \frac{k_3^* T_s}{k_1} & 0 & \frac{k_3^*T_s}{k_1} + \frac{k_3^*}{k_3^+} & \frac{k_3^*T_s}{k_1}+\frac{k_3^*k_4^*T_s}{k_1k_4^+} \\
0 & 0 & 0 & 0 & 0 & 0 \\
0 & 0 & 0 & 0 & 0 & 0 \\
0 & 0 & 0 & 0 & 0 & 0 \\
0 & 0 & 0 & 0 & 0 & 0
\end{bmatrix}
\]
We have
\[
\rho(FV^{-1}) = 0
\]
which is always less than one. It follows by Theorem~2.4 that $x^*$ is locally asymptotically stable for every stoichiometric compatibility class.
\end{proof}

\begin{proof}[Proof of Theorem~4.9 part $1.$, $(E,E,C_1,C_2)$ network]
Consider the system of differential equations \eqref{eq:c1c2de}:
\begin{equation}
\label{eq:c1c2de}
\left\{ \; \; \;
\begin{aligned}
\frac{d s_0}{dt} &= -k_1^{+} s_0 e + k_1^{-} c_1 + k_4 c_4, \\
\frac{d s_1}{dt} &= -k_4^{+} s_1 c_1 - k_2^{+} s_1 e + k_1 c_1 + k_2^{-} c_2 + k_3 c_3 + k_4^{-} c_4, \\
\frac{d s_2}{dt} &= -k_3^{+} s_2 c_2 + k_2 c_2 + k_3^{-} c_3, \\
\frac{d e}{dt} &= -k_1^{+} s_0 e + (k_1^{-} + k_1) c_1 - k_2^{+} s_1 e + (k_2^{-} + k_2) c_2, \\
\frac{d c_1}{dt} &= k_1^{+} s_0 e - (k_1^{-} + k_1) c_1 - k_4^{+} s_1 c_1 + (k_4^{-} + k_4) c_4, \\
\frac{d c_2}{dt} &= k_2^{+} s_1 e - (k_2^{-} + k_2) c_2 - k_3^{+} s_2 c_2 + (k_3^{-} + k_3) c_3, \\
\frac{d c_3}{dt} &= k_3^{+} s_2 c_2 - (k_3^{-} + k_3) c_3, \\
\frac{d c_4}{dt} &= k_4^{+} s_1 c_1 - (k_4^{-} + k_4) c_4.
\end{aligned}
\right.
\end{equation}
This system has the boundary steady states $(s_0, s_1,  s_2, e, c_1, c_2, c_3, c_4) = (0, 0, T_s, T_e, 0,0,0,0)$. The system \eqref{eq:c1c2de} restricted to the vanishing species of the dead boundary steady state is
\[
\left\{ \; \; \;
\begin{aligned}
\frac{d s_0}{dt} &= -k_1^{+} s_0 e + k_1^{-} c_1 + k_4 c_4, \\
\frac{d s_1}{dt} &= -k_4^{+} s_1 c_1 - k_2^{+} s_1 e + k_1 c_1 + k_2^{-} c_2 + k_3 c_3 + k_4^{-} c_4, \\
\frac{d c_1}{dt} &= k_1^{+} s_0 e - (k_1^{-} + k_1) c_1 - k_4^{+} s_1 c_1 + (k_4^{-} + k_4) c_4, \\
\frac{d c_2}{dt} &= k_2^{+} s_1 e - (k_2^{-} + k_2) c_2 - k_3^{+} s_2 c_2 + (k_3^{-} + k_3) c_3, \\
\frac{d c_3}{dt} &= k_3^{+} s_2 c_2 - (k_3^{-} + k_3) c_3, \\
\frac{d c_4}{dt} &= k_4^{+} s_1 c_1 - (k_4^{-} + k_4) c_4.
\end{aligned}
\right.
\]
We utilize the following decomposition $f(x) = \mathcal{F} - \mathcal{V}$ of \eqref{eq:c1c2de}:
\[\mathcal{F} = \langle 0, k_3 c_3, 0, 0, 0, 0 \rangle\]
and
\[
\mathcal{V} = \begin{bmatrix}
\begin{aligned}
& k_1^+ s_0 e - k_1^- c_1 - k_4 c_4 \\
& k_4^+ s_1 c_2 + k_2^+ s_1 e - k_1 c_1 - k_2^- c_2 - k_4^- c_4 \\
& -k_1^+ s_0 e + (k_1^- + k_1) c_1 + k_3^+ s_2 c_1 - (k_3^- + k_3) c_3 \\ 
& -k_2^+ s_1 e + (k_2^- + k_2) c_2 + k_4^+ s_1 c_2 - (k_4^- + k_4) c_4 \\ 
& -k_3^+ s_2 c_1 + (k_3^- + k_3) c_3 \\
& -k_4^+ s_1 c_2 + (k_4^- + k_4) c_4
\end{aligned}
\end{bmatrix}
\]

We take the Jacobians of $\mathcal{F}$ and $\mathcal{V}$ with respect to $\{ s_0, s_1, c_1, c_2, c_3, c_4 \}$ and evaluate them at the boundary steady state $x^*$ to obtain
\[
F = \begin{bmatrix}
0 & 0 & 0 & 0 & 0 & 0 \\
0 & 0 & 0 & 0 & k_3 & 0 \\
0 & 0 & 0 & 0 & 0 & 0 \\
0 & 0 & 0 & 0 & 0 & 0 \\
0 & 0 & 0 & 0 & 0 & 0 \\
0 & 0 & 0 & 0 & 0 & 0
\end{bmatrix} \quad \text{and} \quad V = \begin{bmatrix}
k_1^+ T_e & 0 & -k_1^- & 0 & 0 & -k_4 \\
0 & k_2^+ T_e & -k_1 & -k_2^- & 0 & -k_4^- \\
-k_1^+ T_e & 0 & k_3^+ T_s + k_1 + k_1^- & 0 & -k_3^- - k_3 & 0 \\
0 & -k_2^+ T_e & 0 & k_2^- + k_2 & 0 & -k_4^- - k_4 \\
0 & 0 & -k_3^+ T_s & 0 & k_3^- + k_3 & 0 \\
0 & 0 & 0 & 0 & 0 & k_4^- + k_4
\end{bmatrix}
\]
We now compute
\[
V^{-1} = \begin{bmatrix}
\frac{1}{k_1^* T_e} & 0 & \frac{k_1^-}{k_1 k_1^+ T_e} & 0 & \frac{k_1^-}{k_1 k_1^+ T_e} & \frac{k_4^*}{k_1^* k_4^+ T_e} \\
\frac{1}{k_2^* T_e} & \frac{1}{k_2^* T_e} & \frac{1}{k_2^* T_e} & \frac{k_2^-}{k_2 k_2^+ T_e} & \frac{1}{k_2^* T_e} & \frac{1}{k_2^* T_e} + \frac{k_2^-}{k_2 k_2^+ T_e} \\
\frac{1}{k_1} & 0 & \frac{1}{k_1} & 0 & \frac{1}{k_1} & \frac{k_4^*}{k_1 k_4^+} \\
\frac{1}{k_2} & \frac{1}{k_2} & \frac{1}{k_2} & \frac{1}{k_2} & \frac{1}{k_2} & \frac{2}{k_2} \\
\frac{k_3^* T_s}{k_1 k_3} & 0 & \frac{k_3^* T_s}{k_1 k_3} & 0 & \frac{k_3^*T_s}{k_1 k_3} + \frac{k_3^*}{k_3^+k_3}  & \frac{k_3^*T_s}{k_1k_3}+\frac{k_3^*k_4^*T_s}{k_1k_3k_4^+} \\
0 & 0 & 0 & 0 & 0 & \frac{k_4^*}{k_4 k_4^+}
\end{bmatrix}
.
\]
so that
\[
FV^{-1} = \begin{bmatrix}
0 & 0 & 0 & 0 & 0 & 0 \\
\frac{k_3^* T_s}{k_1} & 0 & \frac{k_3^* T_s}{k_1} & 0 & \frac{k_3^*T_s}{k_1} + \frac{k_3^*}{k_3^+}  & \frac{k_3^*T_s}{k_1}+\frac{k_3^*k_4^*T_s}{k_1k_4^+} \\
0 & 0 & 0 & 0 & 0 & 0 \\
0 & 0 & 0 & 0 & 0 & 0 \\
0 & 0 & 0 & 0 & 0 & 0 \\
0 & 0 & 0 & 0 & 0 & 0
\end{bmatrix}
\]
We have
\[
\rho(FV^{-1}) = 0
\]
which is less than one. It follows by Theorem~2.4 that $x^*$ is locally asymptotically stable for every stoichiometric compatibility class.
\end{proof}

\begin{proof}[Proof of Theorem~4.12 part $1.$, $(E,E,C_2,C_2)$ network]
Consider the system of differential equations \eqref{eq:c2c2de}:
\begin{equation}
\label{eq:c2c2de}
\left\{ \; \; \;
\begin{aligned}
\frac{d s_0}{dt} &= -k_1^{+} s_0 e + k_1^{-} c_1 + k_4 c_4, \\
\frac{d s_1}{dt} &= -k_4^{+} s_1 c_2 - k_2^{+} s_1 e + k_1 c_1 + k_2^{-} c_2 + k_3 c_3 + k_4^{-} c_4, \\
\frac{d s_2}{dt} &= -k_3^{+} s_2 c_2 + k_2 c_2 + k_3^{-} c_3, \\
\frac{d e}{dt} &= -k_1^{+} s_0 e + (k_1^{-} + k_1) c_1 - k_2^{+} s_1 e + (k_2^{-} + k_2) c_2, \\
\frac{d c_1}{dt} &= k_1^{+} s_0 e - (k_1^{-} + k_1) c_1, \\
\frac{d c_2}{dt} &= k_2^{+} s_1 e - (k_2^{-} + k_2) c_2 - k_3^{+} s_2 c_2 + (k_3^{-} + k_3) c_3 \\
&\qquad {} - k_4^{+} s_1 c_2 + (k_4^{-} + k_4) c_4, \\
\frac{d c_3}{dt} &= k_3^{+} s_2 c_2 - (k_3^{-} + k_3) c_3, \\
\frac{d c_4}{dt} &= k_4^{+} s_1 c_2 - (k_4^{-} + k_4) c_4.
\end{aligned}
\right.
\end{equation}
All four networks share the conservation laws (the conservation laws).
This system has the boundary steady states $(s_0, s_1,  s_2, e, c_1, c_2, c_3, c_4) = (0, 0, T_s, T_e, 0,0,0,0)$. The system \eqref{eq:c2c2de} restricted to the vanishing species of the dead boundary steady state is
\[
\left\{ \; \; \;
\begin{aligned}
\frac{d s_0}{dt} &= -k_1^{+} s_0 e + k_1^{-} c_1 + k_4 c_4, \\
\frac{d s_1}{dt} &= -k_4^{+} s_1 c_2 - k_2^{+} s_1 e + k_1 c_1 + k_2^{-} c_2 + k_3 c_3 + k_4^{-} c_4, \\
\frac{d c_1}{dt} &= k_1^{+} s_0 e - (k_1^{-} + k_1) c_1, \\
\frac{d c_2}{dt} &= k_2^{+} s_1 e - (k_2^{-} + k_2) c_2 - k_3^{+} s_2 c_2 + (k_3^{-} + k_3) c_3 \\
&\qquad {} - k_4^{+} s_1 c_2 + (k_4^{-} + k_4) c_4, \\
\frac{d c_3}{dt} &= k_3^{+} s_2 c_2 - (k_3^{-} + k_3) c_3, \\
\frac{d c_4}{dt} &= k_4^{+} s_1 c_2 - (k_4^{-} + k_4) c_4.
\end{aligned}
\right.
\]
We utilize the following decomposition $f(x) = \mathcal{F} - \mathcal{V}$ of \eqref{eq:c2c2de}:
\[\mathcal{F} = \langle 0, k_3 c_3, 0, 0, 0, 0 \rangle\]
and
\[
\mathcal{V} = \begin{bmatrix}
\begin{aligned}
&k_1^+ s_0 e - k_1^- c_1 - k_4 c_4 \\
&k_4^+ s_1 c_2 + k_2^+ s_1 e - k_1 c_1 - k_2^- c_2 - k_4^- c_4 \\
&-k_1^+ s_0 e + (k_1^- + k_1) c_1 \\
&-k_2^+ s_1 e + (k_2^- + k_2) c_2 + k_3^+ s_2 c_2 - (k_3^- + k_3) c_3 + k_4^+ s_1 c_2 - (k_4^- + k_4) c_4 \\
&-k_3^+ s_2 c_2 + (k_3^- + k_3) c_3 \\
&-k_4^+ s_1 c_2 + (k_4^- + k_4) c_4
\end{aligned}
\end{bmatrix}
\]
We take the Jacobians of $\mathcal{F}$ and $\mathcal{V}$ with respect to $\{ s_0, s_1, c_1, c_2, c_3, c_4 \}$ and evaluate them at the boundary steady state $x^*$ to obtain
\[
\begin{bmatrix}
0 & 0 & 0 & 0 & 0 & 0 \\
0 & 0 & 0 & 0 & k_3 & 0 \\
0 & 0 & 0 & 0 & 0 & 0 \\
0 & 0 & 0 & 0 & 0 & 0 \\
0 & 0 & 0 & 0 & 0 & 0 \\
0 & 0 & 0 & 0 & 0 & 0
\end{bmatrix} \quad \text{and} \quad \begin{bmatrix}
k_1^+ T_e & 0 & -k_1^- & 0 & 0 & -k_4 \\
0 & k_2^+ T_e & -k_1 & -k_2^- & 0 & -k_4^- \\
-k_1^+ T_e & 0 & k_1^- + k_1 & 0 & 0 & 0 \\
0 & -k_2^+ T_e & 0 & k_3^+ T_s + k_2 + k_2^- & -k_3^- - k_3 & -k_4^- - k_4 \\
0 & 0 & 0 & -k_3^+ T_s & k_3^- + k_3 & 0 \\
0 & 0 & 0 & 0 & 0 & k_4^- + k_4
\end{bmatrix}
\]
We now compute
\[
V^{-1} = \begin{bmatrix}
\frac{1}{k_1^* T_e} & 0 & \frac{k_1^-}{k_1 k_1^+ T_e} & 0 & 0 & \frac{k_4^*}{k_1^* k_4^+ T_e} \\
\frac{1}{k_2^* T_e} & \frac{1}{k_2^* T_e} & \frac{1}{k_2^* T_e} & \frac{k_2^-}{k_2 k_2^+ T_e} & \frac{k_2^-}{k_2 k_2^+ T_e} & \frac{1}{k_2^* T_e} + \frac{k_2^-}{k_2 k_2^+ T_e} \\
\frac{1}{k_1} & 0 & \frac{1}{k_1} & 0 & 0 & \frac{k_4^*}{k_1 k_4^+}  \\
\frac{1}{k_2} & \frac{1}{k_2} & \frac{1}{k_2} & \frac{1}{k_2} & \frac{1}{k_2} & \frac{2}{k_2} \\
\frac{k_3^* T_s}{k_2 k_3} & \frac{k_3^* T_s}{k_2 k_3} & \frac{k_3^* T_s}{k_2 k_3} & \frac{k_3^* T_s}{k_2 k_3} & \frac{k_3^*T_s}{k_2 k_3} + \frac{k_3^*}{k_3^+k_3}  & \frac{2 k_3^* T_s}{k_2 k_3} \\
0 & 0 & 0 & 0 & 0 & \frac{k_4^*}{k_4 k_4^+} 
\end{bmatrix}
\]
so that
\[
FV^{-1} = \begin{bmatrix}
0 & 0 & 0 & 0 & 0 & 0 \\
\frac{k_3^* T_s}{k_2} & \frac{k_3^* T_s}{k_2} & \frac{k_3^* T_s}{k_2} & \frac{k_3^* T_s}{k_2} & \frac{k_3^*T_s}{k_2 } + \frac{k_3^*}{k_3^+}  & \frac{2 k_3^* T_s}{k_2} \\
0 & 0 & 0 & 0 & 0 & 0 \\
0 & 0 & 0 & 0 & 0 & 0 \\
0 & 0 & 0 & 0 & 0 & 0 \\
0 & 0 & 0 & 0 & 0 & 0
\end{bmatrix}
\]
We have
\[
\rho(FV^{-1}) = \frac{k_3^* T_s}{k_2}
\]
It follows by Theorem~2.4 that $x^*$ is locally asymptotically stable for any stoichiometric compatibility class satisfying $T_s < k_2/k_3^*$ and unstable for any compatibility class satisfying $T_s > k_2/k_3^*$.
\end{proof}

\end{document}